\theoremstyle{plain}
\newcommand*{\da@rightarrow}{\mathchar"0\hexnumber@\symAMSa 4B }
\newcommand*{\da@leftarrow}{\mathchar"0\hexnumber@\symAMSa 4C }
\newcommand*{\xdashrightarrow}[2][]{%
	\mathrel{%
		\mathpalette{\da@xarrow{#1}{#2}{}\da@rightarrow{\,}{}}{}%
	}%
}
\newcommand{\xdashleftarrow}[2][]{%
	\mathrel{%
		\mathpalette{\da@xarrow{#1}{#2}\da@leftarrow{}{}{\,}}{}%
	}%
}
\newcommand*{\da@xarrow}[7]{%
	\sbox0{$\ifx#7\scriptstyle\scriptscriptstyle\else\scriptstyle\fi#5#1#6\m@th$}%
	\sbox2{$\ifx#7\scriptstyle\scriptscriptstyle\else\scriptstyle\fi#5#2#6\m@th$}%
	\sbox4{$#7\dabar@\m@th$}%
	\dimen@=\wd0 %
	\ifdim\wd2 >\dimen@
	\dimen@=\wd2 %
	\fi
	\count@=2 %
	\def\da@bars{\dabar@\dabar@}%
	\@whiledim\count@\wd4<\dimen@\do{%
		\advance\count@\@ne
		\expandafter\def\expandafter\da@bars\expandafter{%
			\da@bars
			\dabar@ 
		}%
	}%
	\mathrel{#3}%
	\mathrel{%
		\mathop{\da@bars}\limits
		\ifx\\#1\\%
		\else
		_{\copy0}%
		\fi
		\ifx\\#2\\%
		\else
		^{\copy2}%
		\fi
	}%
	\mathrel{#4}%
}
\def\C{\mathbb{C}}
\def\Z{\mathcal{Z}}
\def\K{\mathbb{K}}
\def\L{\widetilde{L}}
\def\C{\mathcal{C}}
\def\M{\mathcal{M}^{\sigma}}
\newcounter{cnt1}
\newcounter{cnt2}
\newcounter{cnt3}
\newcommand{\blr}{\begin{list}{$($\roman{cnt1}$)$} {\usecounter{cnt1}
			\setlength{\topsep}{0pt} \setlength{\itemsep}{0pt}}}
\newcommand{\bla}{\begin{list}{$($\alph{cnt2}$)$} {\usecounter{cnt2}
			\setlength{\topsep}{0pt} \setlength{\itemsep}{0pt}}}
\newcommand{\bln}{\begin{list}{$($\arabic{cnt3}$)$} {\usecounter{cnt3}
			\setlength{\topsep}{0pt} \setlength{\itemsep}{0pt}}}
\newcommand{\el}{\end{list}}
\renewcommand{\fnum@algorithm}{\fname@algorithm}
\theoremstyle{definition}
\newtheorem{Thm}{Theorem}[section]
\newtheorem{Lem}[Thm]{Lemma}
\newtheorem{Def}[Thm]{Definition}
\newtheorem{Exm}[Thm]{Example}
\newtheorem{Rem}[Thm]{Remark}
\newtheorem{Cor}[Thm]{Corollary}
\title{}
\author{}
\date{}
\begin{document}
\title{Automorphisms of extensions of Lie-Yamaguti algebras and Inducibility problem}

\author{Saikat Goswami}
\address{TCG Centres for Research and Education in Science and Technology, Institute for Advancing Intelligence, Salt Lake, Kolkata-700091, West Bengal, India.}
\address{RKMVERI, Belur, Howrah-711202, West Bengal, India.}
\email{saikatgoswami.math@gmail.com}

\author{Satyendra Kumar Mishra}

\author{Goutam Mukherjee}\footnote{Corresponding author email [A3]: goutam.mukherjee@tcgcrest.org; gmukherjee.isi@gmail.com}
\address{TCG Centres for Research and Education in Science and Technology, Institute for Advancing Intelligence, Salt Lake, Kolkata-700091, West Bengal, India.}
\address{Academy of Scientific and Innovative Research (AcSIR), Ghaziabad- 201002, India.}
\email{satyamsr10@gmail.com, goutam.mukherjee@tcgcrest.org, gmukherjee.isi@gmail.com}

\subjclass[2020]{Primary: 17A30, 17A36, 17A40, 17B99; Secondary: 15A12, 15A24}
\keywords{{Non-associative algebras, Lie-Yamaguti algebras,  Cohomology, Abelian extensions,  Inducibility of automorphisms, Wells exact sequence.}}

\begin{abstract}
Lie-Yamaguti algebras generalize both the notions of Lie algebras and Lie triple systems. In this paper, we consider the inducibility problem for automorphisms of extensions of Lie-Yamaguti algebras. More precisely, given an abelian extension 
\[\begin{tikzcd}
	0 & V & {\L} & L & 0
	\arrow[from=1-1, to=1-2]
	\arrow["i", from=1-2, to=1-3]
	\arrow["p", from=1-3, to=1-4]
	\arrow[from=1-4, to=1-5]
\end{tikzcd}\] 
of a Lie-Yamaguti algebra $L$, we are interested in finding the pairs $(\phi, \psi)\in \mathrm{Aut}(V)\times \mathrm{Aut}(L)$, which are inducible by an automorphism in $\mathrm{Aut}(\L)$. We connect the inducibility problem to the $(2,3)$-cohomology of Lie-Yamaguti algebra. In particular, we show that the obstruction for a pair of automorphisms in $\mathrm{Aut}(V)\times \mathrm{Aut}(L)$ to be inducible lies in the $(2,3)$-cohomology group $\mathrm{H}^{(2,3)}(L,V)$. We develop the Wells exact sequence for Lie-Yamaguti algebra extensions, which relates the space of derivations, automorphism groups, and $(2,3)$-cohomology groups of Lie-Yamaguti algebras. As an application,  we describe certain automorphism groups of semi-direct product Lie-Yamaguti algebras. In the sequel, we apply our results to discuss inducibility problem for nilpotent Lie-Yamaguti algebras of index $2$. We give examples of infinite families of such nilpotent Lie-Yamaguti algebras and characterize the inducible pairs of automorphisms for extensions arising from these examples. Finally, we write an algorithm to find out all the inducible pairs of automorphisms for extensions arising from nilpotent Lie-Yamaguti algebras of index $2$.   

\end{abstract}

\maketitle

\noindent

\thispagestyle{empty}

\vspace{-1.0cm}

\medskip

\tableofcontents

\vspace{-1.0cm}

\section{\large{Introduction}} 

\smallskip

The notion of the Lie triple system was formally introduced as an algebraic object by N. Jacobson \cite{Jacobson}  in connection with problems arising from quantum mechanics. The example of a Lie triple system that arose from quantum mechanics is known as the Meson field, as described by N. Jacobson. The notion of Lie-Yamaguti algebras is a generalization of Lie triple systems and Lie algebras, derived from Nomizu's work on the invariant affine connections on homogeneous spaces in the 1950s. 

\smallskip

Let $M$ be a smooth manifold with a linear connection $\nabla$. For a given fixed point $e\in M$, there is a local multiplication $\mu$ at $e$ compatible with $\nabla$. If $M$ is a reductive homogeneous space $A/K$ with the canonical connection, due to K. Nomizu \cite{nomizu-inv}, then this local multiplication $\mu$ satisfies certain special property (cf. \cite{kikk-geo}). In particular, if M is a Lie group $A$
itself, then the canonical connection is reduced to the connection of \cite{Cartan} and
the local multiplication $\mu$ coincides with the multiplication of $A$ in local. 

\smallskip

Motivated by this fact, M. Kikkawa investigated the problem of the existence of a global differentiable binary system on a reductive homogeneous space $A/K,$ which coincides locally with the above geodesic local multiplication $\mu$. M. Kikkawa \cite{kikk-geo} observed that the problem is related to the canonical connection and to the general Lie triple system defined on the tangent space $T_eM.$ K. Yamaguti \cite{yama-lts} introduced the notion of {\it general Lie triple system} as a vector space equipped with a pair consisting of a bilinear and a trilinear operation and satisfying some intriguing relations in order to characterize the torsion and curvature tensors of Nomizu's canonical connection \cite{nomizu-inv}. In \cite{kikk-geo}, Kikkawa renamed the notion of general Lie triple system as {\it Lie triple algebra}. Kinyon and Weinstein \cite{kinyon-weinstein} observed that Lie triple algebras, which they called {\it Lie-Yamaguti algebras} in their paper, can be constructed from Leibniz algebras (non-anti-symmetric analog of Lie algebras introduced by J. L. Loday \cite{Loday}).

\smallskip

Lie-Yamaguti algebras have been studied by several authors in \cite{kikk-kill,kikk-solv,sagle-sim-anti, sagle-tot-geodesic, sagle-anti, sagle-anti-homo, yama-cohomo, sagle-homo-red, sagle-homo-holo, sagle-simple, zhang}. A. Sagle constructed some remarkable examples of Lie-Yamaguti algebras arising from reductive homogeneous spaces in differential geometry \cite{sagle-sim-anti}. Malcev algebras \cite{Malcev1,Malcev2, Malcev3,Malcev4} are non-associative algebras, which play a dominant role in the theory of Mufang loops generalizing the role of Lie algebras in the theory of Lie groups. Every Malcev algebra $(L,\langle \cdot, \cdot \rangle)$ becomes a Lie-Yamaguti algebra with the binary operation $[\cdot,\cdot]$ and the ternary operation $\{\cdot,\cdot,\cdot\}$ defined by 
\begin{equation}\label{Malcev to LY}
[x,y]= \langle x,y \rangle \quad \text{and} \quad \{x,y,z\} = \langle x,\langle y,z\rangle\rangle - \langle y,\langle x,z \rangle\rangle + \langle\langle x,y\rangle, z \rangle, \quad x,y,z\in L.
\end{equation}
 The following diagram shows how Lie-Yamaguti algebras are related to different non-commutative algebras; we refer to \cite{yama-malcev} for more details.
\[\begin{tikzcd}
	{\text{Associative algebra}} && {\text{Lie algebra }} && {\text{  Lie-triple system}} \\
	\\
	{\text{Alternative algebra}} && {\text{Malcev algebra }} && {\text{Lie-Yamaguti algebra.}}
	\arrow[from=1-1, to=1-3,"\text{\tiny $[x,y] = xy-yx$}","\text{\tiny commutator}"']
	\arrow[from=1-3, to=1-5,"\text{\tiny $\{x,y,z\}\\=[[x,y],z]$,}"']  
	\arrow[from=3-1, to=3-3, "\text{\tiny $\langle x,y\rangle = xy-yx$}","\text{\tiny commutator}"']
	\arrow[from=3-3, to=3-5,"\text{$[x,y],\{x,y,z\}$}","\text{\tiny defined by \eqref{Malcev to LY}}"']
	\arrow[dashed, from=1-3, to=3-3,swap, "\text{\tiny generalization}"]
	\arrow[dashed, from=1-1, to=3-1,swap, "\text{\tiny generalization}"]
	\arrow[from=1-5, to=3-5,shift left=1.5ex, "\text{$[x,y]=0$}"]
	\arrow[from=1-3, to=3-5, "\text{\tiny $[x,y].$}"]
\end{tikzcd}\]

\smallskip

Although some particular structure theoretic properties of Lie–Yamaguti algebras were studied in \cite{benito} and \cite{benito-irreducible}, the general structure theory of Lie-Yamaguti algebras is yet to be explored. Recently, in \cite{abdelwahab}, the authors have given a classification of nilpotent Lie-Yamaguti algebra up to dimension four. In this paper, we give examples of two infinite families of nilpotent Lie-Yamaguti algebras, which generalizes the nilpotent Lie-Yamaguti algebras $\mathcal{L}_{3,4}$ of \cite[Theorem 3.1]{abdelwahab}. It is well known that a Lie-Yamaguti algebra reduces to a Lie triple system (respectively, Lie algebra) when the binary (respectively, the ternary) operation is zero. In striking contrast, the infinite family $(\mathfrak{H}_n,[\cdot,\cdot],\{\cdot,\cdot,\cdot\})$ (refer to Example \ref{Counter_exm_1}) generalizing the $3$-dimensional Lie-Yamaguti algebra $\mathcal{L}_{3,4}$ of \cite{abdelwahab}, is simultaneously the $n$-dimensional Heisenberg Lie algebra and a Lie triple system, with both the operations being non-trivial. Therefore, we call it ``Heisenberg Lie-Yamaguti algebra''.

\medskip

K. Yamaguti introduced a cohomology theory for Lie-Yamaguti algebras in \cite{yama-cohomo} to study abelian extensions. This cohomology is used in \cite{zhang} to study infinitesimal deformations of Lie-Yamaguti algebras. In this paper, we interpret the inducibility of a pair of automorphisms in an abelian extension in terms of the cohomology theory of Lie-Yamaguti algebras. We show that the obstruction for a pair of automorphisms to be inducible lies in the $(2,3)$-cohomology of Lie-Yamaguti algebra. Later on, we develop the Wells map and the Wells exact sequence for Lie-Yamaguti algebra extensions. We refer to \cite{bar-singh,hazra-habib,jin,passi, robinson,wells} for the construction and applications of the Wells sequence for groups and Lie (super) algebras. 

\smallskip

The inducibility problem is described as follows: Let $0\rightarrow V\rightarrow \tilde{L}\rightarrow L\rightarrow 0$ be an abelian extension of Lie-Yamaguti algebras. If $\gamma\in \mathrm{Aut}(\tilde{L})$ is an automorphism such that $\gamma(v)\in V$, for all $v\in V$, then it induces a pair of automorphisms $(\gamma|_V, \bar{\gamma})\in \mathrm{Aut}(V)\times \mathrm{Aut}(L)$, see Section \ref{sec-4} for details. A pair $(\phi, \psi)\in \mathrm{Aut}(V)\times \mathrm{Aut}(L)$ is said to be inducible if there exists an automorphism $\gamma\in \mathrm{Aut}(\tilde{L})$ that induces the pair $(\phi,\psi)$. The main goal of this paper is to characterize these inducible pairs. 
We also discuss a necessary and sufficient condition of inducibility in terms of the Wells map. Finally, we discuss the inducibility problem for nilpotent Lie-Yamaguti algebras of index $2$ in section \ref{sec-6}. 
 
\smallskip
 
The organization of the paper is as follows: In Section \ref{sec-2}, we recall the definitions of Lie-Yamaguti algebras and their representations. We also recall the definition of the cohomology groups associated with Lie-Yamaguti algebras from \cite{yama-cohomo}. In Section \ref{sec-3}, we recall the notion of abelian extensions of Lie-Yamaguti algebras and their cohomological interpretation from \cite{yama-cohomo}. Section \ref{sec-4} and \ref{sec-5} deals with the inducibility problem for an abelian extension and describes the Wells exact sequence. As an application to the discussion in Section \ref{sec-5}, we describe certain automorphism groups of semi-direct product Lie-Yamaguti algebras. Finally, in Section \ref{sec-6}, we introduce two infinite families of nilpotent Lie-Yamaguti algebras (index $2$). We characterize inducible pairs of automorphisms for abelian extensions emerging from these Lie-Yamaguti algebras. We also give an algorithm to find all the inducible pairs of automorphisms for extensions arising from a (finite-dimensional) nilpotent  Lie-Yamaguti algebra of index $2$.

\bigskip

\section{\large Preliminaries} \label{sec-2}

\medskip

In this section, we recall the definition of Lie-Yamaguti algebras and their representations from \cite{yama-lts,yama-cohomo}. We also recall the cohomology theory of Lie-Yamaguti algebras from \cite{yama-cohomo}. Throughout this paper, we denote a field of characteristic zero by $\mathbb{K}$. We consider all vector spaces over the field $\K$ and linear maps to be $\mathbb{K}$-linear.

\subsection{Lie-Yamaguti algebras}
\begin{Def}\label{LYA}
A \textbf{Lie-Yamaguti algebra} is a triple $(L,[\cdot,\cdot],\{\cdot,\cdot,\cdot\})$, where $L$ is a vector space equipped with
\begin{itemize}
\item a $\mathbb{K}$-bilinear operation $[\cdot,\cdot]:L \times L \to L$ satisfying 
\begin{equation}
[a,b] = -[b,a] \tag{LY1}, \label{LY1}
\end{equation}
\item a $\mathbb{K}$-trilinear operation $\{\cdot,\cdot,\cdot\}:L \times L \times L \to L$ satisfying 
\begin{equation}
\{a,b,c\} = -\{b,a,c\}, \tag{LY2} \label{LY2}
\end{equation}
\end{itemize}
subject to the following conditions 
\begin{equation}
[[a,b],c]+\{a,b,c\}+[[b,c],a]+\{b,c,a\}+[[c,a],b]+\{c,a,b\}=0,
\tag{LY3} \label{LY3}
\end{equation}
\begin{equation}
\{[a,b],c,x\}+\{[b,c],a,x\}+\{[c,a],b,x\} = 0, \tag{LY4} \label{LY4}
\end{equation}
\begin{equation}
\{a,b,[x,y]\} = [\{a,b,x\},y]+[x,\{a,b,y\}], \tag{LY5} \label{LY5}
\end{equation}
\begin{equation}
\{a,b,\{x,y,z\}\} = \{\{a,b,x\},y,z\}+\{x,\{a,b,y\},z\}+\{x,y,\{a,b,z\}\}, \tag{LY6} \label{LY6} 
\end{equation}
\end{Def}
\noindent
for all $a,b,c,x,y,z \in L$. A Lie-Yamaguti algebra is said to be \textbf{abelian} if both the brackets $[\cdot,\cdot]$ and $\{\cdot,\cdot,\cdot\}$ are trivial. We denote a Lie-Yamaguti algebra $(L, [\cdot,\cdot], \{\cdot,\cdot,\cdot\})$ simply by $L$.  

\medskip

\begin{Def}
Let $L$ and $L'$ be two Lie-Yamaguti algebras. A linear map  $\varphi:L \to L'$ is a \textbf{morphism of Lie-Yamaguti algebras} if 
\begin{align*}
\varphi([a,b]) = [\varphi(a),\varphi(b)] \quad\mbox{and }\quad \varphi(\{a,b,c\}) = \{\varphi(a),\varphi(b),\varphi(c)\}, \quad \forall~ a,b,c \in L.
\end{align*}
It is said to be an isomorphism if $\phi$ is invertible. 
\end{Def}

We denote by $\mathrm{Aut}(L)$ the set of all automorphisms (self-isomorphisms) of a Lie-Yamaguti algebra $L$. Then $\mathrm{Aut} (L)$ has an obvious group structure called the automorphism group. We list some natural examples of Lie-Yamaguti algebras. 

\begin{Exm} 
	Let $(L,[\cdot,\cdot])$ be a Lie algebra. Define a binary and ternary bracket on $L$ by
	\[
	[a,b]:=[a,b], \quad \{a,b,c\} := [[a,b],c],\quad \forall a,b,c\in L. 
	\]
	Then $(L,[\cdot,\cdot],\{\cdot,\cdot,\cdot\})$ is a Lie-Yamaguti algebra.
\end{Exm}
\begin{Exm}
	Let $(L,[ \cdot,\cdot ])$ be a reductive Lie algebra with decomposition $L=G \oplus H$, such that $[G,H] \subseteq H$ and $[G,G] \subseteq G$. Let $\pi_{H}:L \to H$ and $\pi_{G}:L \to G$ be the projection maps onto $H$ and $G$ respectively. Then $H$ is a Lie-Yamaguti algebra with respect to the brackets
	\[
		[a,b] := \pi_{H}[a,b], 
		\quad \{a,b,c\} := [\pi_{G}[ a,b],c],\quad\forall ~a,b,c\in H.
	\]
\end{Exm}

\begin{Exm}
Let $(L,\cdot)$ be a Leibniz algebra. Then $L$ is a Lie-Yamaguti algebra with respect to the following binary and ternary bracket
\[[a,b]  =  a\cdot b - b \cdot a,  \quad
\{a,b,c\}  =  -(a \cdot b) \cdot c, \quad\forall ~ a,b,c\in L.\]
\end{Exm}

\begin{Exm}
Let $M$ be a closed manifold (a compact smooth manifold without boundary) with an affine connection. Let $\mathfrak{X}(M)$ denote the set of all vector fields on $M$. Define a binary and a ternary bracket on $\mathfrak{X}(M)$ as follows
\[
[x,y] = - T(x,y), \quad \{x,y,z\} = -R(x,y)z,
\]
where $T$ is the torsion and $R$ is the  curvature tensor. Then $(\mathfrak{X}(M),[\cdot,\cdot],\{\cdot,\cdot,\cdot\})$ is a Lie-Yamaguti algebra. 
\end{Exm}

\subsection{Representations of Lie-Yamaguti algebras}
\begin{Def}
Let $L$ be a Lie-Yamauti algebra and $V$ be a vector space. A \textbf{representation} of $L$ on $V$ consists of a linear map $\rho:L \to \operatorname{End}(V)$ and two bilinear maps $D,\theta:L \times L \to \operatorname{End}(V)$ such that 
\begin{align}
&D(a,b) + \theta(a,b) - \theta(b,a) = [\rho(a),\rho(b)] - \rho([a,b]), \label{R1} \tag{R1} \\
&\theta(a,[b,c]) - \rho(b) \theta(a,c) + \rho(c) \theta(a,b) = 0, \label{R2} \tag{R2} \\
&\theta([a,b],c) - \theta(a,c) \rho(b) + \theta(b,c) \rho(a) = 0,
\label{R3} \tag{R3} \\
&\theta(c,d) \theta(a,b) - \theta(b,d) \theta(a,c) - \theta(a,\{b,c,d\}) + D(b,c) \theta(a,d) = 0, 
\label{R4} \tag{R4} \\
&[D(a,b),\rho(c)] = \rho(\{a,b,c\}),
\label{R5} \tag{R5} \\
&[D(a,b),\theta(c,d)] = \theta(\{a,b,c\},d) + \theta(c,\{a,b,d\}), \label{R6} \tag{R6}
\end{align}
for all $a,b,c,d \in L$.
\end{Def}
\noindent
 Using relations (\ref{R1}), (\ref{R2}), (\ref{R3}), and (\ref{R5}) stated above, we get an additional identity 
\begin{equation} \tag{R7}
D([a,b],c) + D([b,c],a) + D([c,a],b) = 0.
\end{equation}

\smallskip 

\noindent
Taking $V=L$, there is a natural representation of $L$ on itself, called the \textbf{adjoint representation} of $L$, where the representation maps $\rho, D, \theta$ are given by 
\begin{equation*}
\rho(a)(b) := [a,b], \quad D(a,b)(c) := \{a,b,c\}, \quad\mbox{and }\quad \theta(a,b)(c) :=  \{c,a,b\}.
\end{equation*}

\noindent
One can characterize representations of Lie-Yamaguti algebras using semi-direct product Lie-Yamaguti algebras (see \cite{LieYReppairs}).

\begin{Lem}
Let $L$ be a Lie-Yamaguti algebra and $V$ be a vector space. Let $\rho:L \to \mathrm{End}(V)$ and $D,\theta:L \times L \to \mathrm{End}(V)$ be linear and bilinear maps respectively. Then $(\rho,\theta,D;V)$ is a representation of $L$ if and only if there is a Lie-Yamaguti algebra structure on the vector space $L \oplus V$, with the binary and ternary brackets defined by 
\begin{eqnarray*}
[a+u,b+v]_{\ltimes} & := & [a,b] + \rho(a)(v) - \rho(b)(u), \\
\{a+u,b+v,c+w\}_{\ltimes} & := & \{a,b,c\} + D(a,b)(w) + \theta(b,c)(u) - \theta(a,c)(v). 
\end{eqnarray*}
This Lie-Yamaguti algebra is called the \textbf{semi-direct product Lie-Yamaguti algebra}, denoted by $L \ltimes V$.
\end{Lem}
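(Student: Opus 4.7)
The plan is to establish the equivalence by direct verification: writing out each of the six Lie-Yamaguti axioms (LY1)--(LY6) for the candidate bracket and triple-bracket on $L\oplus V$, then splitting each resulting identity into its $L$-component and its $V$-component. Because $V$ appears only linearly in the given formulas (there are no products of $V$-elements), the $L$-component will always coincide with the corresponding axiom for $L$ itself, which holds by hypothesis. Thus in each case the content of the axiom on $L\oplus V$ reduces to the $V$-component, which we will identify with one of the representation axioms (R1)--(R6).

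The first step is the antisymmetry axioms. For (LY1), swapping $a+u$ and $b+v$ in $[a+u,b+v]_\ltimes$ directly exchanges the $\rho(a)(v)$ and $\rho(b)(u)$ terms with a sign, so (LY1) on $L\oplus V$ is equivalent to (LY1) on $L$. The analogous check for (LY2) uses antisymmetry of $\{\cdot,\cdot,\cdot\}$ in the first two slots on $L$, together with the skew-symmetry $D(a,b)=-D(b,a)$ (this skew-symmetry is implicit in the definition of $D$, or alternatively can be read off from (R1) combined with the antisymmetry of $[\cdot,\cdot]$ via $D(a,b)+\theta(a,b)-\theta(b,a) = [\rho(a),\rho(b)] - \rho([a,b])$).

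Next, I would tackle the five remaining axioms one by one. For (LY3), expanding $[[a+u,b+v]_\ltimes,c+w]_\ltimes + \{a+u,b+v,c+w\}_\ltimes$ and summing cyclically yields an $L$-equation (which is (LY3) for $L$) plus a $V$-equation whose sum over the cyclic permutations collapses precisely to (R1). For (LY4), expanding $\{[a+u,b+v]_\ltimes, c+w, x+\xi\}_\ltimes$ cyclically in $(a,b,c)$ gives a $V$-equation equivalent to (R3). For (LY5), the two sides of the identity $\{a+u,b+v,[x+\xi,y+\eta]_\ltimes\}_\ltimes = [\{a+u,b+v,x+\xi\}_\ltimes, y+\eta]_\ltimes + [x+\xi, \{a+u,b+v,y+\eta\}_\ltimes]_\ltimes$ produce a $V$-equation in the variables $(u,v,\xi,\eta)$; collecting the coefficient of each variable separately yields (R2) (from the $\xi,\eta$ slots) and (R5) (from the $u,v$ slots). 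Finally, for (LY6), the same strategy applied to the Jacobi-like identity for $\{\cdot,\cdot,\{\cdot,\cdot,\cdot\}\}$ gives a $V$-equation whose various coefficient components reproduce (R4) and (R6).

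The main obstacle is simply the bookkeeping in step (LY6), since the trilinear bracket has three slots each carrying a $V$-component, so every expansion produces many terms and one must carefully isolate the coefficient of each independent $V$-variable to recover the correct representation axiom. The converse direction is the same calculation read in reverse: assuming (R1)--(R6), the $V$-components of (LY1)--(LY6) on $L\oplus V$ all vanish, so $L\oplus V$ with the prescribed operations is a Lie-Yamaguti algebra. Apart from this computational load the argument is entirely mechanical, and no auxiliary identity beyond the stated (R1)--(R6) (and the derived (R7)) is needed.
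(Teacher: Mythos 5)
The paper itself does not prove this lemma (it is simply quoted, with a citation to the LieYRep-pairs reference), so the only issue is whether your verification plan is complete. The strategy --- expand (LY1)--(LY6) for the brackets on $L\oplus V$, discard the $L$-components, and match the $V$-components with (R1)--(R6) --- is the right one, and your treatment of (LY1)--(LY5) is essentially correct. Two small remarks there: for (LY2) you correctly note that the needed skew-symmetry $D(a,b)=-D(b,a)$ follows from (R1); and in (LY5) the attribution is swapped --- with arguments $a+u,b+v,x+\xi,y+\eta$, the $\xi,\eta$ slots give (R5) and the $u,v$ slots give (R2) --- a harmless bookkeeping slip.

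There is, however, a genuine gap in (LY6). Writing the five arguments as $a+u,\,b+v,\,x+\xi,\,y+\eta,\,z+\zeta$, the coefficient of $\zeta$ in the $V$-component of (LY6) is the identity $[D(a,b),D(x,y)] = D(\{a,b,x\},y) + D(x,\{a,b,y\})$, which is neither (R4) nor (R6), nor the derived (R7); so your closing claim that no auxiliary identity beyond (R1)--(R6) and (R7) is needed is false, and the forward direction (representation $\Rightarrow$ Lie-Yamaguti structure) is incomplete as written. The identity is true but must be derived: substitute $D(x,y)=\theta(y,x)-\theta(x,y)+[\rho(x),\rho(y)]-\rho([x,y])$ from (R1), expand $[D(a,b),\theta(\cdot,\cdot)]$ by (R6), expand $[D(a,b),[\rho(x),\rho(y)]]$ by the Jacobi identity in $\mathrm{End}(V)$ together with (R5), use (R5) and (LY5) for $L$ on $\rho(\{a,b,[x,y]\})$, and compare with the (R1)-expansion of $D(\{a,b,x\},y)+D(x,\{a,b,y\})$. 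With this derived identity added to your toolkit (alongside (R7) for the $\xi$-slot of (LY4)), the remaining coefficients of (LY6) --- the $\xi,\eta$ slots giving (R6) and the $u,v$ slots giving (R4) --- close the forward direction; the converse direction is unaffected, since (R1)--(R6) are all read off directly from slots you already identified.
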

 
\begin{Def} 
Let $L$ be a Lie Yamaguti algebra and $(\rho,D,\theta; V)$ and $(\rho',D',\theta'; V')$ be two representations of $L$. A linear map $\varphi:V \to V'$ is said to be a \textbf{morphism of representations} of $L$ if for any $a,b \in L$ and $v \in V$,
\[\varphi (\rho(a) (v)) = \rho'(a)(\varphi(v)), \quad \varphi D(a,b) (v) = D'(a,b)(\varphi(v)), \quad \mbox{and }\quad \varphi \theta(a,b)(v) = \theta'(a,b)(\varphi(v)).\]
The morphism $\varphi$ is said to be an isomorphism of representations if $\varphi$ is invertible. 
\end{Def}

\medskip

\subsection{A Cochain complex of Lie-Yamaguti algebras}

We recall the cochain complex associated with Lie-Yamaguti algebra from \cite{yama-cohomo}. 
Let $(\rho,D,\theta;~V)$ be a representation of a Lie-Yamaguti algebra $L$. Then, the cochain complex is described as follows: 

\medskip

\begin{itemize}
\item Set $C^1(L,V) = \operatorname{Hom}~ (L,V)$ and  $C^0(L,V)$ be the subspace spanned by the diagonal elements $(f,f) \in C^1(L,V) \times C^1(L,V)$. 
\smallskip
\item For $n \ge 2$, denote by $C^n(L,V)$, the space of all $n$-linear maps $f \in \operatorname{Hom}~(L^{\otimes n},V)$ satisfying
\[
	f(x_1,\cdots,x_{2i-1},x_{2i},\cdots,x_{n}) = 0, \quad\mbox{if }  x_{2i-1} = x_{2i}, \quad \forall~i=1,2,\ldots, \lfloor n/2 \rfloor.
\]
Then, for $p \ge 1$, set
\[
	C^{(2p,2p+1)}(L,V) := C^{2p}(L,V) \times C^{(2p+1)}(L,V).
\]
\end{itemize}

Any element $(f,g) \in C^{(2p,2p+1)}(L,V)$ is called a $(2p,2p+1)$-cochain. Now, we define the coboundary in the following cochain complex of $L$ with coefficients in $V$

\[\begin{tikzcd}
	{C^0(L,V)} & {C^{(2,3)}(L,V)} & {C^{(4,5)}(L,V)} & {C^{(6,7)}(L,V)} & \cdots \\
	& {C^{(3,4)}(L,V)}
	\arrow["\delta", from=1-2, to=1-3]
	\arrow["\delta", from=1-3, to=1-4]
	\arrow["\delta", from=1-4, to=1-5]
	\arrow["{\delta^*}", from=1-2, to=2-2]
	\arrow["{\delta}", from=1-1, to=1-2]
\end{tikzcd}\]

\smallskip
\noindent
For $p \ge 1$, the coboundary map $\delta=(\delta_I,\delta_{II}):C^{(2p,2p+1)}(L,V) \to C^{(2p+2,2p+3)}(L,V)$ is defined by 
\medskip
\begin{align*}
	\delta&_If(x_1,x_2,\ldots,x_{2p+2}) \\
	&=
	(-1)^p 
	\Big[\rho(x_{2p+1})g(x_1,\ldots,x_{2p},x_{2p+2}) - \rho(x_{2p+2})g(x_1,\ldots,x_{2p+1}) 
	- g(x_1,\ldots,x_{2p},[x_{2p+1},x_{2p+2}])\Big] \\
	& \quad + \sum_{k=1}^p (-1)^{k+1} D(x_{2k-1},x_{2k}) f(x_1,\ldots,\hat{x}_{2k-1}, \hat{x}_{2k},\ldots,x_{2p+2}) \\
	&\quad + \sum_{k=1}^p \sum_{j=2k+1}^{2p+2} (-1)^k f(x_1,\ldots,\hat{x}_{2k-1}, \hat{x}_{2k},\ldots,\{x_{2k-1},x_{2k},x_j\}, \ldots, x_{2p+2}) \quad \mbox{and}\\
	\\
	\delta&_{II}g(x_1,x_2,\ldots,x_{2p+3}) \\
	&=
	(-1)^p
	\Big[ \theta(x_{2p+2},x_{2p+3})g(x_1,\ldots,x_{2p+1}) - \theta(x_{2p+1},x_{2p+3})g(x_1,\ldots,x_{2p},x_{2p+2})\Big]	\\
	& \quad
	+ \sum_{k=1}^{p+1} (-1)^{k+1} D(x_{2k-1},x_{2k})g(x_1,\ldots,\hat{x}_{2k-1},\hat{x}_{2k},\ldots,x_{2p+3}) \\
	& \quad
	+ \sum_{k=1}^{p+1} \sum_{j=2k+1}^{2p+3} (-1)^k g(x_1,\ldots,\hat{x}_{2k-1},\hat{x}_{2k},\ldots, \{x_{2k-1},x_{2k},x_j\},\ldots,x_{2p+3}).
\end{align*}

\medskip
\noindent
In particular, for $p=1$, the coboundary map $\delta:C^{(2,3)}(L,V) \to C^{(4,5)}(L,V)$ is given by 
\begin{multline}
	\delta_If(a,b,c,d) \label{coboundary_map 1}
	=
	- \rho(c)g(a,b,d) + \rho(d)g(a,b,c) 
	+ g(a,b,[c,d]) \\
	+ D(a,b) f(c,d)
	- f(\{a,b,c\},d) - f(c,\{a,b,d\}), \quad
\end{multline}
\begin{multline}
	\delta_{II}g(a,b,c,d,e) \label{coboundary_map 2}
	=
	- \theta(d,e)g(a,b,c) + \theta(c,e)g(a,b,d) 
	+ D(a,b)g(c,d,e) - D(c,d)g(a,b,e)\\
	\qquad \qquad \qquad \qquad \qquad 
	- g(c,\{a,b,d\},e) - g(c,d,\{a,b,e\}) 
	+ g(a,b,\{c,d,e\}) 
	- g(\{a,b,c\},d,e). 
\end{multline}

\medskip
\noindent
The map $\delta = (\delta_I,\delta_{II}):C^0(L,V) \to C^{(2,3)}(L,V)$ is defined by
\begin{equation} \label{coboundary_map 3}
	\delta_I f(a,b) = \rho(a)f(b) - \rho(b)f(a) - f([a,b]),
\end{equation}
\begin{equation} \label{coboundary_map 4}
	\delta_{II}f(a,b,c) = \theta(b,c)f(a) - \theta(a,c)f(b) + D(a,b)f(c) - f(\{a,b,c\}).
\end{equation}

\medskip
\noindent
Finally, let us define the map $\delta^{\star} = (\delta^{\star}_I,\delta^{\star}_{II}):C^{(2,3)}(L,V)\to C^{(3,4)}(L,V)$ by
\begin{equation}  \label{coboundary_map 5}
	\begin{split}
		\delta^{\star}_If(a,b,c) 
		&= -\rho(a)f(b,c) - \rho(b)f(c,a) - \rho(c)f(a,b) + f([a,b],c) + f([b,c],a) \\
		& \quad \quad
		+ f([c,a],b) + g(a,b,c) + g(b,c,a) + g(c,a,b),
	\end{split}
\end{equation} 
\begin{equation} \label{coboundary_map 6}
	\begin{split}
		\delta^{\star}_{II}g(a,b,c,d) 
		&= \theta(a,d)f(b,c) + \theta(b,d)f(c,a) + \theta(c,d)f(a,b) + g([a,b],c,d) \\ 
		&\quad \quad
		+ g([b,c],a,d) + g([c,a],b,d). 
	\end{split}
\end{equation}

\bigskip
\noindent
\textbf{Cohomology groups of Lie-Yamaguti algebras:}
Let us denote the set of all $(2p,2p+1)$-cocycles

\smallskip 

\noindent by $Z^{(2p,2p+1)}(L,V)$ and the set of all $(2p,2p+1)$-coboundaries by $B^{(2p,2p+1)}(L,V)$. For $p = 1$,
	\[Z^{(2,3)}(L,V) = \{(f,g) \in C^{(2,3)}(L,V): \delta_If = 0 = \delta_I^\star f ~ \text{and}~ \delta_{II} g = 0 = \delta_{II}^{\star} g\}, \] 
	\[B^{(2,3)}(L,V) = \{f \in C^1(L,V) : \delta_I f = 0 = \delta_{II} f\}.\]
	For $p \ge 2$,
	\[Z^{(2p,2p+1)}(L,V) = \{(f,g) \in C^{(2p,2p+1)}(L,V) : \delta_If=0=\delta_{II}g\},\] 
	\[B^{(2p,2p+1)}(L,V) = \{\delta(f,g) : (f,g) \in C^{(2p-2,2p-1)}(L,V)\}.\] 

\medskip

\noindent
The cohomology groups of $L$ with coefficients in $V$ is defined as follows:
	\[
	H^1(L,V) := \{f \in C^1(L,V) : \delta_If = 0 = \delta_{II} f\}.
	\]
The group $H^1(L,V)$ is called the first cohomology group. Let $\mathrm{Der}(L,V)$ be the space of all derivations from $L$ to $V$. Then, $H^1(L,V) = \mathrm{Der}(L,V)$. For $p \ge 1$, denote 
	\[
	H^{(2p,2p+1)}(L,V):= \frac{Z^{(2p,2p+1)}(L,V)}{B^{(2p,2p+1)}(L,V)}.
	\]
The group $\mathrm{H}^{(2p,2p+1)}(L,V)$ is called the $(2p,2p+1)$ cohomology group of $L$ with coefficients in $V$. 	
	
\bigskip

\section{\large Abelian extensions of Lie-Yamaguti algebras } \label{sec-3}

\smallskip

In this section, we recall a cohomological interpretation of abelian extensions of Lie-Yamaguti algebras from \cite{yama-cohomo}. Let $L$ be a Lie-Yamaguti algebra.

\begin{Def}
A subspace $W$ of $L$ is an \textbf{ideal} of $L$ if $$[W,L] \subseteq W,\quad \{W,L,L\} \subseteq W, \quad\text{and}\quad \{L,L,W\} \subseteq W.$$ 
\end{Def}

\begin{Def}
An ideal $W$ of $L$ is said to be an \textbf{abelian ideal} in $L$ if $$ [W,W] = (0),\quad \mbox{and }\quad\{W,W,L\} = \{W,L,W\} = \{L,W,W\} = (0).$$ 
\end{Def}

\begin{Def}
The \textbf{center} $\Z(L)$ of $L$ is defined as 
\[\Z(L):= \{a \in L : [a,x]=0,~ \{a,x,y\}=0=\{x,y,a\}~\text{for all}~x,y \in L\}.\]
\end{Def}

\begin{Def} Let $L$ and $V$ be Lie Yamaguti algebras. An \textbf{extension} of $L$ by $V$ is a Lie-Yamaguti algebra $\L$, which fits into a short exact sequence in the category of Lie-Yamaguti algebras over $\mathbb{K}$ 
\[\begin{tikzcd}
0 & V & {\L} & L & 0
\arrow[from=1-1, to=1-2]
\arrow["i", from=1-2, to=1-3]
\arrow["p", from=1-3, to=1-4]
\arrow[from=1-4, to=1-5].
\end{tikzcd}\] 
\end{Def}
\noindent
We denote an extension as above simply by ${\L}$. An extension is called an \textbf{abelian extension} if $V$ is an abelian ideal in $\L$.

\smallskip

Throughout the article, by a \textbf{section} of $p:\L \to L$ we mean a linear map $s: L \to \L$  such that $p \circ s = id_L$. Since any short exact sequence of vector spaces is split, there exists a section $s:L \to \L$ of $p:\L \to L$. Hence, as a vector space $\L = L \oplus V$. 

\begin{Def}
Let ${\L}$ and ${\widehat{L}}$ be two abelian extensions of $L$ by $V$. They are said to be \textbf{equivalent} if there is a morphism $\varphi:\L \to \widehat{L}$ of Lie-Yamaguti algebra making the following diagram commutative 
\begin{align*}
\xymatrix{
0 \ar[r] & V  \ar@{=}[d] \ar[r]^{i} & \L \ar[d]^\varphi \ar[r]^{p} & L  \ar@{=}[d] \ar[r] & 0 
\\
0 \ar[r] & V \ar[r]_{i'} & \widehat{L} \ar[r]_{p'} & L \ar[r] & 0.}
\end{align*}	 
\end{Def}
Let $ 0 \to V \xrightarrow[]{i} \L \xrightarrow[]{p} L \to 0$ be an abelian extension of $L$ by $V$ and $s$ be a section of $p$. Then, there is a representation of $L$ on $V$ with the representation maps $\rho, D,$ and $\theta$ defined as follows: for any $a,b \in L$ and $v \in V$, 
\begin{align}
	\rho(a)(v) &:= [s(a),v]_{\L}, \label{rep1}\\ 
	D(a,b)(v) &:= \{s(a), s(b),v\}_{\L}, \label{rep2}\\
	\theta(a,b)(v) &:= \{v, s(a), s(b)\}_{\L}. \label{rep3}
\end{align}
The above representation of $L$ on $V$ is called the \textbf{induced representation of $L$ on $V$}. A direct computation shows that the representation maps does not depend on the choice of the section. 

\bigskip
We next recall that there is a $(2,3)$-cocycle associated to the abelian extension ${\L}$ given by $(\alpha,\beta)$ consisting of a bilinear map $\alpha: L \times L \to V$ and a trilinear map $\beta: L \times L \times L \to V$ given by
\begin{equation}\label{ind_cocycle-I}
\alpha(a,b) := [s(a),s(b)]_{\L} - s([a,b]),
\end{equation}
\begin{equation}\label{ind_cocycle-II}
\beta(a,b,c) := \{s(a),s(b),s(c)\}_{\L} - s(\{a,b,c\}),\quad \forall a,b,c \in L.
\end{equation}
It is easy to verify that $(\alpha,\beta) \in C^{(2,3)}(L,V)$. In fact, the following lemma below shows that it is a $(2,3)$-cocycle, which is called the \textbf{induced cocycle} obtained from the extension ${\L}$.

\begin{Lem} \label{Lem_induced-cocycle}
	The above defined $(\alpha,\beta) \in C^{(2,3)}(L,V)$ is a $(2,3)$-cocycle. 
\end{Lem}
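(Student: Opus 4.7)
The plan is to verify the four cocycle identities
$\delta_I(\alpha,\beta) = 0$, $\delta_{II}(\alpha,\beta) = 0$, $\delta^\star_I \alpha = 0$, and $\delta^\star_{II}\beta = 0$
that define membership in $Z^{(2,3)}(L,V)$. The key structural observation is that, via the section $s$, we have $\widetilde L \cong s(L)\oplus V$ as vector spaces with $V$ an abelian ideal, so that the brackets of $\widetilde L$ decompose as
$$[s(a),s(b)]_{\widetilde L} = s([a,b]) + \alpha(a,b), \qquad \{s(a),s(b),s(c)\}_{\widetilde L} = s(\{a,b,c\}) + \beta(a,b,c),$$
while the induced representation satisfies $\rho(a)v = [s(a),v]_{\widetilde L}$, $D(a,b)v = \{s(a),s(b),v\}_{\widetilde L}$, and $\theta(a,b)v = \{v,s(a),s(b)\}_{\widetilde L}$. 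Since $V$ is abelian in $\widetilde L$, every bracket $[v,w]_{\widetilde L}$ and every triple bracket with at least two entries from $V$ vanishes.

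The strategy is then to apply each of the Lie-Yamaguti axioms $(\mathrm{LY}3)$--$(\mathrm{LY}6)$ to the elements $s(a),s(b),\ldots\in\widetilde L$ and decompose the resulting identity via the splitting above. For example, applying $(\mathrm{LY}3)$ to $s(a),s(b),s(c)\in\widetilde L$ and using the decomposition of $[s(-),s(-)]_{\widetilde L}$ together with $[\alpha(a,b),s(c)]_{\widetilde L} = -\rho(c)\alpha(a,b)$ yields
$$0 = \sum_{\mathrm{cyc}}\Big(s([[a,b],c]) + \alpha([a,b],c) - \rho(c)\alpha(a,b) + s(\{a,b,c\}) + \beta(a,b,c)\Big).$$
The $s(\cdot)$-terms collapse by $(\mathrm{LY}3)$ in $L$, leaving exactly $\delta^\star_I\alpha(a,b,c)=0$ in view of \eqref{coboundary_map 5}. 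In the same spirit, $(\mathrm{LY}4)$ applied to $s(a),s(b),s(c),s(c')$ will yield $\delta^\star_{II}\beta = 0$, while $(\mathrm{LY}5)$ and $(\mathrm{LY}6)$ applied to images under $s$ will produce $\delta_I(\alpha,\beta) = 0$ and $\delta_{II}(\alpha,\beta) = 0$ respectively.

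The main obstacle will be the bookkeeping for the identity arising from $(\mathrm{LY}6)$, which is a nested triple-triple bracket in five arguments. Expanding $\{s(a),s(b),\{s(c),s(d),s(e)\}_{\widetilde L}\}_{\widetilde L}$ requires two applications of the decomposition, and one must track carefully the $D$- and $\theta$-contributions produced when the inner $V$-valued factor $\beta(c,d,e)$ is fed back into the outer triple bracket (producing $D(a,b)\beta(c,d,e)$) and, symmetrically, the terms where $\beta(a,b,\cdot)$ arises from expanding an outer $\{s(\cdot),s(\cdot),\cdot\}_{\widetilde L}$. The abelian ideal property ensures all nested $V$-contributions reduce to a single application of $\rho$, $D$, or $\theta$; collecting these and cancelling the $s(\cdot)$-terms via $(\mathrm{LY}6)$ in $L$ leaves precisely the identity $\delta_{II}(\alpha,\beta)(a,b,c,d,e)=0$ of \eqref{coboundary_map 2}. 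An entirely analogous, but lighter, computation handles $\delta_I(\alpha,\beta)=0$ from $(\mathrm{LY}5)$ and $\delta^\star_{II}\beta=0$ from $(\mathrm{LY}4)$.
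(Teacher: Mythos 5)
Your proposal is correct and follows essentially the same route as the paper: the paper likewise verifies the four identities $\delta_I\alpha=0$, $\delta_{II}\beta=0$, $\delta_I^\star\alpha=0$, $\delta_{II}^\star\beta=0$ by expanding via the section and the abelian-ideal property and invoking (LY5), (LY6), (LY3), (LY4) respectively, exactly the correspondence you set up. Running the argument from the axioms in $\L$ and splitting along $\L\cong s(L)\oplus V$, as you do, is just the same computation read in the opposite direction.
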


\begin{proof}
To show that $(\alpha,\beta)$ is a $(2,3)$-cocycle, we need to show that 
\[\delta(\alpha,\beta)= 0\quad\text{and}\quad \delta^*(\alpha,\beta) = 0,~i.e.,\]  	
\[\delta_I\alpha = 0,\quad \delta_{II}\beta = 0 \quad\text{and}\quad \delta_I^*\alpha=0,\quad \delta_{II}^*\beta=0.\]
Recall the representation maps $\rho, D,$ and $\theta$ of the induced representation are
\[\rho(a)(v) = [s(a),v]_{\L},\quad D(a,b)(v) = \{s(a), s(b),v\}_{\L}, \quad \mbox{and}\]
\[\theta(a,b)(v) = \{v, s(a), s(b)\}_{\L},\quad \forall~a,b\in L, v\in V.\]
By the definitions of $\delta$ and $\delta^*$ we get 
\begin{align*}
\delta_I\alpha(a_1,&a_2,a_3,a_4) \\
&=
		- \rho(a_3)\beta(a_1,a_2,a_4) 
		+ \rho(a_4)\beta(a_1,a_2,a_3) 
		+ \beta(a_1,a_2,[a_3,a_4]) \\
		&\quad \quad
		+ D(a_1,a_2) \alpha(a_3,a_4)
		- \alpha(\{a_1,a_2,a_3\},a_4) - \alpha(a_3,\{a_1,a_2,a_4\}) \\
		&= 
		0 \quad \quad\quad \quad\quad \quad\text{(expanding and using (LY5) we get zero)}.
		\\ ~~ \\
		\delta_{II}\beta(a_1,&a_2,a_3,a_4,a_5) \\
		&=
		- \theta(a_4,a_5)\beta(a_1,a_2,a_3) 
		+ \theta(a_3,a_5)\beta(a_1,a_2,a_4)
		+ D(a_1,a_2)\beta(a_3,a_4,a_5) \\
		&\quad \quad
		- D(a_3,a_4)\beta(a_1,a_2,a_5) 
		- \beta(\{a_1,a_2,a_3\},a_4,a_5) 
		- \beta(a_3,\{a_1,a_2,a_4\},a_5) \\
		&\quad \quad
		- \beta(a_3,a_4,\{a_1,a_2,a_5\})  
		+ \beta(a_1,a_2,\{a_3,a_4,a_5\}) \\
		&=
		0 \quad \quad\quad \quad\quad \quad\text{(expanding and using (LY6) we get zero)}.
		\end{align*}
				\begin{align*}
			\delta_I^*\alpha(a_1,&a_2,a_3) \\
		&= - \sum_{\circlearrowleft(a_1,a_2,a_3)}\rho(a_1)\alpha(a_2,a_3) 
		~+ \sum_{\circlearrowleft(a_1,a_2,a_3)}\alpha([a_1,a_2],a_3)
		~+ \sum_{\circlearrowleft(a_1,a_2,a_3)} \beta(a_1,a_2,a_3) \\
		&=
		0 \quad \quad\quad \quad\quad \quad\text{(expanding and using (LY3) we get zero)}.
		\\ ~~ \\
		\delta_{II}^* \beta(a_1,&a_2,a_3,a_4) \\
		&= 
		\theta(a_1,a_4)\alpha(a_2,a_3) + \theta(a_2,a_4)\alpha(a_3,a_4) + \theta(a_3,a_4)\alpha(a_1,a_2) \\
		&\quad \quad
		+ \beta([a_1,a_2],a_3,a_4) + \beta([a_2,a_3],a_1,a_4) + \beta([a_3,a_1],a_2,a_4) \\	
		&= 0 \quad \quad\quad \quad\quad \quad\text{(expanding and using (LY4) we get zero)}.
	\end{align*}
	Thus, $(\alpha,\beta)$ is a $(2,3)$-cocycle.  
\end{proof}

The following lemma shows that the cohomology class of the $(2,3)$-cocycle $(\alpha,\beta)$ induced by an abelian extension ${\L}$ does not depend on the choice of the section.

\begin{Lem} \label{Lem_ind of sec}
	Let $s$ and $t$ be two sections of $p$ and the corresponding induced cocycles are given by  
	\[
	\alpha_s(a,b) = [s(a),s(b)]_{\L} - s([a,b]),\quad
	\beta_s(a,b,c) = \{s(a),s(b),s(c)\}_{\L} - s(\{a,b,c\}),
	\]
	and
	\[
	\alpha_t(a,b) = [t(a),t(b)]_{\L} - t([a,b]),\quad
	\beta_t(a,b,c) = \{t(a),t(b),t(c)\}_{\L} - t(\{a,b,c\}).
	\]
	Then, the coycles $(\alpha_s,\beta_s)$ and $(\alpha_t,\beta_t)$ are cohomologous, i.e., $(\alpha_s,\beta_s) - (\alpha_t,\beta_t) \in B^{(2,3)}(L,V)$.
\end{Lem}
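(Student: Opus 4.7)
The plan is to exhibit the difference as the coboundary of the $1$-cochain $f := s - t$. First, since both $s$ and $t$ are sections of $p$, we have $p \circ (s-t) = 0$, so $f$ takes values in $\ker p = V$ and defines an element of $C^1(L,V)$. The target is therefore to prove
\[
\alpha_s - \alpha_t = \delta_I f \qquad\text{and}\qquad \beta_s - \beta_t = \delta_{II} f,
\]
with $\delta_I, \delta_{II}$ as in \eqref{coboundary_map 3} and \eqref{coboundary_map 4}. Once established, this yields $(\alpha_s,\beta_s) - (\alpha_t,\beta_t) = \delta(f,f) \in B^{(2,3)}(L,V)$.

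For the bilinear part, I would write $s(a) = t(a) + f(a)$ and expand $[s(a),s(b)]_{\L}$ by bilinearity into four terms. Since $V$ is an abelian ideal of $\L$, the term $[f(a),f(b)]_{\L}$ vanishes. The two mixed terms $[t(a),f(b)]_{\L}$ and $[f(a),t(b)]_{\L}$ are then identified, via the definition of the induced representation \eqref{rep1} (noted to be section-independent), as $\rho(a) f(b)$ and $-\rho(b)f(a)$ respectively. Combining with $s([a,b]) - t([a,b]) = f([a,b])$ gives $\alpha_s(a,b) - \alpha_t(a,b) = \rho(a) f(b) - \rho(b) f(a) - f([a,b])$, which is exactly $\delta_I f(a,b)$.

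For the trilinear part, I would similarly substitute $s = t + f$ into $\{s(a),s(b),s(c)\}_{\L}$ and expand into eight terms by trilinearity. Using that $V$ is abelian, every term with two or three arguments in $V$ vanishes, leaving only $\{t(a),t(b),t(c)\}_{\L}$ and the three single-$V$ terms. Invoking \eqref{rep2} directly gives $\{t(a),t(b),f(c)\}_{\L} = D(a,b)f(c)$; the term $\{f(a),t(b),t(c)\}_{\L}$ equals $\theta(b,c)f(a)$ by \eqref{rep3}; and the middle term requires (LY2) to swap to $\{t(a),f(b),t(c)\}_{\L} = -\{f(b),t(a),t(c)\}_{\L} = -\theta(a,c)f(b)$. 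Together with $s(\{a,b,c\}) - t(\{a,b,c\}) = f(\{a,b,c\})$ this yields $\beta_s(a,b,c) - \beta_t(a,b,c) = D(a,b)f(c) + \theta(b,c)f(a) - \theta(a,c)f(b) - f(\{a,b,c\}) = \delta_{II} f(a,b,c)$, completing the proof.

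There is no real obstacle beyond careful bookkeeping; the only point deserving attention is the sign that arises from applying (LY2) to the mixed middle term $\{t(a),f(b),t(c)\}_{\L}$, so that $\theta(a,c)$ appears with the correct minus sign and matches \eqref{coboundary_map 4}. The abelian hypothesis on $V$ does all the heavy lifting by killing the higher-order terms in $f$.
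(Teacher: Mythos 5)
Your proposal is correct and follows essentially the same route as the paper: set $f=s-t$ (the paper calls it $\lambda$), note it lands in $V$, expand the brackets using the abelian-ideal property and the induced representation maps, and identify the difference of cocycles with $(\delta_I f,\delta_{II} f)$. The only difference is cosmetic — the paper writes out only the $\alpha$-computation and leaves the $\beta$-part as ``a similar computation,'' whereas you carry out the trilinear expansion (including the (LY2) sign for the middle term) explicitly.
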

\begin{proof}
	
	Since $ps = pt = 1$, we have $s(a) - t(a) \in V$ for all $a \in L$. Let us define a map $\lambda:L \to V$ by 
	\[
		\lambda(a) := s(a) - t(a).
	\]
	and do the following computation 
	\begin{align*}
		\alpha_s(a,b) - \alpha_t(a,b) 
		&= [s(a),s(b)]_{\L} - s[a,b] - [t(a),t(b)]_{\L} + t[a,b] \\
		&= [\lambda(a) + t(a),\lambda(b) + t(b)]_{\L} - [t(a),t(b)]_{\L} - \lambda[a,b] \\
		&= [t(a),\lambda(b)]_{\L} - [t(b),\lambda(a)]_{\L} - \lambda[a,b] \\
		&= \rho(a)(\lambda(b)) - \rho(b) (\lambda(a)) - \lambda[a,b] \\
		&= \delta_I(\lambda)(a,b).
	\end{align*}
	A similar computation shows that $\beta_s(a,b,c) - \beta_t(a,b,c) = \delta_{II}(\lambda) (a,b,c)$. Thus, it follows that
	\[
		 \delta (\lambda) = (\alpha_s,\beta_s) - (\alpha_t,\beta_t) \in B^{(2,3)}(L,V).  
	\]
	Hence completing the proof.
\end{proof}

\bigskip

\section{\large{Inducibility of a pair of automorphisms}} \label{sec-4}

\medskip

This section considers the inducibility problem for an abelian extension of Lie-Yamaguti algebras. The two main results Lemma \ref{Lem_compatibility condition} and Theorem \ref{Thm_cmptbl_indcbl} of this section provide a necessary and sufficient condition for a pair of Lie-Yamaguti algebra automorphisms to be inducible in an abelian extension.

\medskip 
 
Let $\L$ be an abelian extension of $L$ by $V$. Restricting any automorphism $\gamma \in \mathrm{Aut}(\L)$ to $V$ gives an automorphism of $V$ if and only if $\gamma(V) \subseteq V$. We define $$\mathrm{Aut}_V(\L) := \{ \gamma \in \mathrm{Aut}(\L): \gamma(V) = V\}.$$ 
Let $s$ be a section of $p$, then any $\gamma \in \mathrm{Aut}_V(\L)$ induces a pair $(\gamma|_V,\widebar{\gamma}) \in \mathrm{Aut}(V) \times \mathrm{Aut}(L)$, where 
\[\widebar{\gamma}(a) := p\gamma s(a), \quad \forall~ a \in L.\]   
Thus, we have the following diagram,
\[\begin{tikzcd}
	0 & V & {\tilde{L}} & L & 0~ \\
	0 & V & {\tilde{L}} & L & 0.
	\arrow[from=1-1, to=1-2]
	\arrow["i", from=1-2, to=1-3]
	\arrow["p", from=1-3, to=1-4]
	\arrow[from=1-4, to=1-5]
	\arrow[from=2-1, to=2-2]
	\arrow["i"', from=2-2, to=2-3]
	\arrow["p"', from=2-3, to=2-4]
	\arrow[from=2-4, to=2-5]
	\arrow["{\gamma|_V}", from=1-2, to=2-2]
	\arrow["\gamma", from=1-3, to=2-3]
	\arrow["{\bar{\gamma}}", from=1-4, to=2-4]
\end{tikzcd}\]

 Even though $s$ is not a Lie-Yamaguti algebra morphism, it can be shown that  $\bar{\gamma}$ is a Lie-Yamaguti algebra automorphism of $L$. As a result, the above construction gives rise to a group homomorphism
\[\tau: \mathrm{Aut}_V(\L) \to \mathrm{Aut}(V) \times \mathrm{Aut}(L), \qquad \tau(\gamma) := (\gamma|_V,\bar{\gamma}).\] 
The definition of $\widebar{\gamma}$ may depend on the choice of $s$. However, the following lemma ensures that the construction of $\widebar{\gamma}$ does not depend on the choice of $s$.

\begin{Lem}
The definition of $\bar{\gamma}$ does not depend on the choice of a section. 
\end{Lem}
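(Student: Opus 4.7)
The plan is to exploit the fact that any two sections of $p$ differ by a map whose image lies in $V$, combined with the hypothesis that $\gamma$ preserves $V$. Concretely, fix two sections $s, t \colon L \to \tilde{L}$ of $p$, and write $\bar{\gamma}_s(a) := p\gamma s(a)$ and $\bar{\gamma}_t(a) := p\gamma t(a)$. I would like to show $\bar{\gamma}_s = \bar{\gamma}_t$.

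First I would observe that for every $a \in L$,
\[
p(s(a) - t(a)) = a - a = 0,
\]
so $s(a) - t(a) \in \ker p = i(V)$. Identifying $V$ with $i(V) \subseteq \tilde{L}$, there is an element $v_a \in V$ with $s(a) = t(a) + v_a$.

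Next I would use the standing assumption $\gamma \in \mathrm{Aut}_V(\tilde{L})$, which means $\gamma(V) = V$. Applying $\gamma$ to $s(a) = t(a) + v_a$ yields
\[
\gamma(s(a)) = \gamma(t(a)) + \gamma(v_a),
\]
with $\gamma(v_a) \in V = \ker p$. Therefore $p\gamma(v_a) = 0$, and hence
\[
\bar{\gamma}_s(a) = p\gamma(s(a)) = p\gamma(t(a)) + p\gamma(v_a) = p\gamma(t(a)) = \bar{\gamma}_t(a),
\]
establishing the independence. The argument is entirely routine; the only point worth checking carefully is that $\mathrm{Aut}_V(\tilde{L})$ is defined by $\gamma(V) = V$ (not merely $\gamma(V) \subseteq V$), so that $\gamma(v_a) \in V$ may be asserted, and this is precisely how the set was introduced just prior to the lemma. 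No cohomological machinery or Lie-Yamaguti identities are needed here; the statement is really a statement about linear sections of a short exact sequence of vector spaces together with an endomorphism preserving the kernel.
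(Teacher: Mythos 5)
Your proposal is correct and follows the same route as the paper's own argument: both note that $s(a)-t(a)\in\ker p = V$ for any two sections, then use $\gamma(V)=V$ to conclude $p\gamma s(a)=p\gamma t(a)$. Nothing is missing.
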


\begin{proof}
Let $s$ and $t$ be two sections of $p$. Then, we have the following expression,
\[ps(a) - pt(a) = 0 \quad \implies \quad s(a) - t(a) \in Ker(p) = V, \quad \forall~ a\in L,~v\in V.\]
Since $\gamma \in \mathrm{Aut}_V(\L)$, we have $\gamma(s(a) - t(a)) \in V$. Thus, $p \gamma s (a) - p \gamma t (a) = 0$, which implies that $p \gamma s(a) = p \gamma t(a)$. 
\end{proof} 

\begin{Def}
A pair $(\phi,\psi) \in \mathrm{Aut}(V) \times \mathrm{Aut}(L)$ is said to be an \textbf{inducible pair} if $(\phi,\psi) \in \tau\big(\mathrm{Aut}_V(\L)\big)$  
\end{Def}

\noindent
It is natural to ask the following question: 

\smallskip 
\noindent
\textbf{When is a pair of Lie-Yamaguti algebra automorphisms $(\phi, \psi) \in \mathrm{Aut}(V) \times \mathrm{Aut}(L)$ inducible?}

\medskip 

As a first step towards answering this question, we find a necessary condition for a pair to be inducible. Let $\L$ be an abelian extension of $L$ by $V$,
\[\begin{tikzcd}
0 & V & {\L} & L & 0
\arrow[from=1-1, to=1-2]
\arrow["i", from=1-2, to=1-3]
\arrow["p", from=1-3, to=1-4]
\arrow[from=1-4, to=1-5].
\end{tikzcd}\]
From Section \ref{sec-3}, we have the induced representation $(\rho,D,\theta ; V)$ of $L$ on $V$, where the representation maps are
\[\rho(a)(v) = [s(a),v]_{\L}, \quad D(a,b)(v) = \{s(a), s(b),v\}_{\L},\quad\text{and} \quad \theta(a,b)(v) = \{v, s(a), s(b)\}_{\L}.\]
Now, for any given pair $(\phi,\psi) \in \mathrm{Aut}(V) \times \mathrm{Aut}(L)$, we introduce a new representation $(\rho',D',\theta' ; V)$ of $L$ on $V$ as follows
\[\rho'(a)(v) := [s\psi(a), v]_{\L}, \quad D'(a,b)(v) := \{s\psi(a), s\psi(b), v\}_{\L},~\text{and} \quad
\theta'(a,b)(v) := \{v, s\psi(a), s\psi(b)\}_{\L},\]
for all $a,b\in L,~v\in V$. We refer to this representation as the \textbf{twisted representation} of $L$ on $V$ by $\psi$. 

\smallskip

\begin{Lem} \label{Lem_compatibility condition}
If a pair $(\phi,\psi) \in \mathrm{Aut}(V) \times \mathrm{Aut}(L)$ is inducible, then $\phi: (\rho,D,\theta; V) \to (\rho',D',\theta';V)$ is a morphism of representations of $L$, i.e., 
\[\phi \rho(a) (v) = \rho'(a) \phi (v), \quad \phi D(a,b) (v) = D'(a,b) \phi (v), \quad \phi \theta(a,b) (v) = \theta'(a,b) \phi (v),\]
for all $a,b \in L$ and $v \in V$.
\end{Lem}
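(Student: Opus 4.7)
The plan is to unpack the hypothesis that $(\phi,\psi)$ is inducible, rewrite each of the three representation maps on the nose using the automorphism $\gamma \in \mathrm{Aut}_V(\tilde L)$ witnessing inducibility, and then use the fact that $V$ is an abelian ideal to discard the unwanted terms.

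First I would fix $\gamma \in \mathrm{Aut}_V(\tilde{L})$ with $\tau(\gamma) = (\phi,\psi)$, so that $\gamma|_V = \phi$ and $p\gamma s = \psi$. The single most useful observation is that for every $a \in L$, the element $\gamma s(a) - s\psi(a)$ lies in $\ker p = V$, because $p(\gamma s(a) - s\psi(a)) = \psi(a) - \psi(a) = 0$. Writing $\lambda(a) := \gamma s(a) - s\psi(a) \in V$, we thus get the decomposition $\gamma s(a) = s\psi(a) + \lambda(a)$, which is the sole bridge needed between the induced representation $(\rho,D,\theta)$ and its twist $(\rho',D',\theta')$.

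Next, for the $\rho$-intertwining identity, I would compute
\[
\phi\rho(a)(v) = \gamma[s(a),v]_{\tilde L} = [\gamma s(a),\gamma v]_{\tilde L} = [s\psi(a)+\lambda(a),\phi(v)]_{\tilde L}.
\]
Since $V$ is an abelian ideal, $[\lambda(a),\phi(v)]_{\tilde L} = 0$, so the right-hand side collapses to $[s\psi(a),\phi(v)]_{\tilde L} = \rho'(a)\phi(v)$. The arguments for $D$ and $\theta$ are entirely parallel: apply $\gamma$ to the defining formulas \eqref{rep2} and \eqref{rep3}, use that $\gamma$ preserves the ternary bracket, substitute $\gamma s(a) = s\psi(a)+\lambda(a)$ and $\gamma s(b) = s\psi(b)+\lambda(b)$, and expand trilinearly. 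Each cross term contains at least two arguments from $V$, and so vanishes by the abelian-ideal conditions $\{V,V,\tilde L\}_{\tilde L} = \{V,\tilde L,V\}_{\tilde L} = \{\tilde L,V,V\}_{\tilde L} = 0$; only the pure term $\{s\psi(a),s\psi(b),\phi(v)\}_{\tilde L}$ (respectively $\{\phi(v),s\psi(a),s\psi(b)\}_{\tilde L}$) survives, which is exactly $D'(a,b)\phi(v)$ (respectively $\theta'(a,b)\phi(v)$).

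This argument is purely mechanical once the decomposition $\gamma s = s\psi + \lambda$ is in hand, and the only subtlety is bookkeeping the three abelian-ideal conditions to ensure every mixed ternary bracket really is zero. There is no genuine obstacle; the independence of the conclusion from the chosen section follows from the section-independence of $\rho,D,\theta$ already established in Section \ref{sec-3}, so no separate verification is required.
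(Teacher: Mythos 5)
Your proposal is correct and follows essentially the same route as the paper: both arguments pick $\gamma$ with $\tau(\gamma)=(\phi,\psi)$, observe that $\gamma s(a)-s\psi(a)\in\ker p=V$ (the paper phrases this as $sp\gamma s(a)-\gamma s(a)\in V$, which is the same element up to sign since $s\psi=sp\gamma s$), and then kill the discrepancy using the abelian-ideal conditions $[V,V]_{\L}=0$ and $\{V,V,\L\}_{\L}=\{V,\L,V\}_{\L}=\{\L,V,V\}_{\L}=0$. No gap; the only implicit step, that $\phi$ may be replaced by $\gamma$ on $\rho(a)(v)$, $D(a,b)(v)$, $\theta(a,b)(v)$, is justified because these elements lie in $V$ ($V$ being an ideal) and $\gamma|_V=\phi$.
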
 
\begin{proof}
We are given that $(\phi,\psi)$ is an inducible pair, i.e., there exists $\gamma \in \mathrm{Aut}_V(\L)$ such that $\tau(\gamma) = (\gamma|_V,\bar{\gamma}) = (\phi, \psi)$. For any $a,b \in L$ and $v \in V$, we have
\[\phi \rho(a)(v) = \phi[s(a),v]_{\L} = [\gamma s (a), \phi(v)]_{\L}, \quad \rho'(a) \phi (v) = [s\psi(a),\phi(v)]_{\L} = [sp\gamma s(a),\phi(v)]_{\L}.\]
Note that $p\left(sp\gamma s(a) - \gamma s(a)\right) = 0$ implies that $sp\gamma s(a) - \gamma s(a) \in V$. Since $[V,V]_{\L} = (0)$ we have 
\[\rho'(a) \phi (v) - \phi \rho(a)(v) = [sp\gamma s(a) - \gamma s(a) , \phi(v)]_{\L} = 0,\] 
\noindent
which gives us $\phi \rho(a)(v) = \rho'(a) \phi (v)$. By a similar computation, we obtain the other two identities
\[\phi D(a,b) (v) = D'(a,b) \phi(v), \quad \mbox{and}\quad\phi \theta(a,b) (v) = \theta'(a,b) \phi(v)\]
Hence, the map $\phi: V \to V$ is a morphism of representations.
\end{proof}

\noindent
The above lemma gives us a necessary condition for inducibility. Note that one can also express the necessary condition of inducibility obtained above in terms of the following commutative diagram 
\[\begin{tikzcd}
L & L & {L \times L} & {L \times L} & {L \times L} & {L \times L} \\
{End(V)} & {End(V)} & {End(V)} & {End(V)} & {End(V)} & {End(V)}
\arrow["\xi"', from=2-1, to=2-2]
\arrow["\rho"', from=1-1, to=2-1]
\arrow["\rho", from=1-2, to=2-2]
\arrow["{(\psi,\psi)}", from=1-3, to=1-4]
\arrow["\xi"', from=2-3, to=2-4]
\arrow["D"', from=1-3, to=2-3]
\arrow["D", from=1-4, to=2-4]
\arrow["{(\psi,\psi)}", from=1-5, to=1-6]
\arrow["\xi"', from=2-5, to=2-6]
\arrow["\theta"', from=1-5, to=2-5]
\arrow["\theta", from=1-6, to=2-6]
\arrow["\psi", from=1-1, to=1-2],
\end{tikzcd}\]
where $\xi$ is defined by $\xi(f) := \phi \circ f \circ \phi^{-1}$. 
With this necessary condition, we define the notion of ``compatible pairs".  

\begin{Def}
A pair $(\phi,\psi) \in \mathrm{Aut}(V) \times \mathrm{Aut}(L)$ is said to be a \textbf{compatible pair} if the map $\phi: (\rho,D,\theta; V) \to (\rho',D',\theta';V)$ is an automorphism of representations between the induced and the twisted representations of $L$ on $V$. 
\end{Def}

\noindent
We denote by $\C \subseteq \mathrm{Aut}(V) \times \mathrm{Aut}(L)$ the set of all compatible pairs. It immediately follows that $\C$ is a subgroup of $\mathrm{Aut}(V) \times \mathrm{Aut}(L)$. 
Since compatibility is a necessary condition for inducibility, we have 
\[
	\tau\left(\mathrm{Aut}_V(\L)\right) \subseteq \C \subseteq \mathrm{Aut}(V) \times \mathrm{Aut}(L).
\]

\medskip

Let $0 \to V \xrightarrow[]{i} \L \xrightarrow[]{p} L \to 0$ be an abelian extension with a section $s$ of $p$. Our next goal is to find a necessary and sufficient condition for a compatible pair $(\phi,\psi) \in \C$ to be inducible, i.e., 
\[(\phi,\psi) \in \tau(\mathrm{Aut}_V(\L)).\] 
In other words, we want to find when there exists a $\gamma$ such that the following diagram is commutative
\[\begin{tikzcd}
0 & V & \L & L & 0 \\
0 & V & \L & L & 0.
\arrow[from=1-4, to=1-5]
\arrow[from=1-1, to=1-2]
\arrow[from=2-1, to=2-2]
\arrow["i"', from=2-2, to=2-3]
\arrow["p"', from=2-3, to=2-4]
\arrow[from=2-4, to=2-5]
\arrow["\phi"', from=1-2, to=2-2]
\arrow["i", from=1-2, to=1-3]
\arrow["p", from=1-3, to=1-4]
\arrow["\gamma", dashed, from=1-3, to=2-3]
\arrow["\psi", from=1-4, to=2-4]
\end{tikzcd}\]
The answer to this question is related to the $(2,3)$-cohomology group $\mathrm{H}^{(2,3)}(L,V)$. We prove the following lemma before proving the main theorem.

\begin{Lem}
Let $0 \to V \xrightarrow[]{i} \L \xrightarrow[]{p} L \to 0$ be an abelian extension with a section $s$ of $p$. If $(\phi,\psi) \in \C$, then
\[\big(\phi \circ \alpha \circ (\psi^{-1},\psi^{-1}),~ \phi \circ \beta \circ (\psi^{-1}\psi^{-1},\psi^{-1})\big) \in Z^{(2,3)}(L,V),\] 
\noindent
where $(\alpha, \beta)$ is the induced cocycle of the abelian extension ${\L}$ given by equations \eqref{ind_cocycle-I}-\eqref{ind_cocycle-II}.
\end{Lem}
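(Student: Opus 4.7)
The plan is to verify directly the four cocycle conditions for the transported pair
\[
(\alpha',\beta'):=\bigl(\phi\circ\alpha\circ(\psi^{-1},\psi^{-1}),\,\phi\circ\beta\circ(\psi^{-1},\psi^{-1},\psi^{-1})\bigr),
\]
namely $\delta_I\alpha'=0$, $\delta_{II}\beta'=0$, $\delta_I^\star\alpha'=0$, and $\delta_{II}^\star\beta'=0$, where the coboundary operators are those attached to the original induced representation $(\rho,D,\theta;V)$.

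The first step is to convert the abstract compatibility hypothesis into pointwise intertwining identities. Unpacking the definitions, $\rho'(a)=\rho(\psi(a))$, $D'(a,b)=D(\psi(a),\psi(b))$, and $\theta'(a,b)=\theta(\psi(a),\psi(b))$, since in each case the twisted formula simply replaces the section $s$ by $s\circ\psi$. Substituting this into the compatibility conditions of Lemma \ref{Lem_compatibility condition} and then replacing $a\mapsto\psi^{-1}(a)$ (resp.\ $b\mapsto\psi^{-1}(b)$) yields
\[
\rho(a)\phi=\phi\rho(\psi^{-1}a),\quad D(a,b)\phi=\phi D(\psi^{-1}a,\psi^{-1}b),\quad \theta(a,b)\phi=\phi\theta(\psi^{-1}a,\psi^{-1}b).
\]
Moreover, because $\psi^{-1}\in\mathrm{Aut}(L)$, it commutes with both the binary and the ternary brackets.

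The second step is to feed these two facts into the coboundary formulas \eqref{coboundary_map 1}--\eqref{coboundary_map 6} applied to $\alpha'$ and $\beta'$. Every representation operator acting on the outside slides past the $\phi$ hidden inside $\alpha'$ or $\beta'$, while every bracket appearing in an argument passes through $\psi^{-1}$ down to the level of $\alpha$ or $\beta$. Factoring the resulting $\phi$ out front and collecting the $\psi^{-1}$ on each slot produces
\[
\delta_I\alpha'(a,b,c,d)=\phi\bigl(\delta_I\alpha(\psi^{-1}a,\psi^{-1}b,\psi^{-1}c,\psi^{-1}d)\bigr),
\]
together with three analogous identities for $\delta_{II}\beta'$, $\delta_I^\star\alpha'$, and $\delta_{II}^\star\beta'$. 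Since $(\alpha,\beta)\in Z^{(2,3)}(L,V)$ by Lemma \ref{Lem_induced-cocycle}, each right-hand side vanishes, giving $(\alpha',\beta')\in Z^{(2,3)}(L,V)$.

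The main obstacle is not conceptual but purely combinatorial: formulas \eqref{coboundary_map 2} and \eqref{coboundary_map 6} contain many cyclically arranged summands, and one must check term-by-term that after the two substitutions above every summand of $\delta(\alpha',\beta')$ matches the corresponding summand of $\phi\circ\delta(\alpha,\beta)\circ(\psi^{-1},\ldots,\psi^{-1})$ without a sign or index mismatch. Once the three intertwining identities displayed above are in hand, however, the verification is mechanical and parallels the computation already carried out in the proof of Lemma \ref{Lem_induced-cocycle}.
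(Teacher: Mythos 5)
Your proposal is correct and follows essentially the same route as the paper: the paper likewise derives the four identities $\delta_I\alpha'=\phi\circ\delta_I\alpha\circ(\psi^{-1},\ldots,\psi^{-1})$ (and their analogues for $\delta_{II}$, $\delta_I^\star$, $\delta_{II}^\star$) and then invokes Lemma \ref{Lem_induced-cocycle} together with the bijectivity of $\psi$. Your explicit statement of the intertwining identities $\rho(a)\phi=\phi\rho(\psi^{-1}a)$, $D(a,b)\phi=\phi D(\psi^{-1}a,\psi^{-1}b)$, $\theta(a,b)\phi=\phi\theta(\psi^{-1}a,\psi^{-1}b)$ simply makes precise the ``simplifying'' step the paper leaves implicit.
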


\begin{proof}
From Lemma \ref{Lem_induced-cocycle}, the pair $(\alpha, \beta)$ is a $(2,3)$-cocycle. i.e., for $a,b,c,d,e \in L$, we have 
\begin{equation} \label{alpha_beta cocycle}
\delta_I \alpha(a,b,c,d) = 0,\quad \delta_{II} \beta (a,b,c,d,e) = 0, \quad \delta_I^* \alpha (a,b,c) = 0,\quad \mbox{and}\quad \delta_{II}^* \beta (a,b,c,d) = 0.   
\end{equation}
\noindent
Applying the definition of $\delta$, $\delta^*$ on $\phi \circ \alpha \circ (\psi^{-1},\psi^{-1})$ and $\phi \circ \beta \circ (\psi^{-1},\psi^{-1},\psi^{-1})$, and simplifying we get the following expressions
\begin{align*}
\delta_I\big(\phi \circ \alpha \circ (\psi^{-1},\psi^{-1})\big)(a,b,c,d)
&= 
\phi\big(\delta_I\alpha(\psi^{-1}a,\psi^{-1}b,\psi^{-1}c, \psi^{-1}d)\big), 
\\ 
\delta_{II}\big(\phi \circ \beta \circ (\psi^{-1}, \psi^{-1}, \psi^{-1})\big)(a,b,c,d,e)
&= 
\phi\big(\delta_{II} \beta (\psi^{-1}a, \psi^{-1}b, \psi^{-1}c, \psi^{-1}d, \psi^{-1}e)\big), 
\\ 
\delta_I^*\big(\phi \circ \alpha \circ (\psi^{-1},\psi^{-1}) \big)(a,b,c) 
&= 
\phi \big(\delta_I^*\alpha (\psi^{-1}a,\psi^{-1}b,\psi^{-1}c)\big),
\\ 
\delta_{II}^* \big(\phi \circ \beta \circ (\psi^{-1},\psi^{-1},\psi^{-1})\big)(a,b,c,d) 
&= 
\phi\big( \delta_{II}^* \beta (\psi^{-1}a,\psi^{-1}b,\psi^{-1}c,\psi^{-1}d)\big).
\end{align*}
Since the equation \eqref{alpha_beta cocycle} holds for all $a,b,c,d,e \in L$ and $\psi$ is an automorphism of $L$. It is evident that equation \eqref{alpha_beta cocycle} holds for $\psi^{-1}a, \psi^{-1}b, \psi^{-1}c,\psi^{-1}d, \psi^{-1}e \in L$. Thus,  
\[\big(\phi \circ \alpha \circ (\psi^{-1},\psi^{-1})~,~ \phi \circ \beta \circ (\psi^{-1},\psi^{-1},\psi^{-1})\big) \in Z^{(2,3)}(L,V),\]
\noindent
which completes the proof.
\end{proof}

\noindent
We are now ready to give a necessary and sufficient condition for a compatible pair to be inducible. 

\begin{Thm} \label{Thm_cmptbl_indcbl}
Let $0 \to V \xrightarrow[]{i} \L \xrightarrow[]{p} L \to 0$ be an abelian extension of $L$. A compatible pair $(\phi,\psi)$ is inducible if and only if the $(2,3)$ cocycles  
\[(\alpha,\beta) \quad \text{and} \quad 
\big(\phi \circ \alpha \circ (\psi^{-1},\psi^{-1}),~ \phi \circ \beta \circ (\psi^{-1}\psi^{-1},\psi^{-1})\big)\]
are cohomologous. 
\end{Thm}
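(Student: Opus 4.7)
The plan is to parametrise every candidate $\gamma \in \mathrm{Aut}_V(\L)$ that could induce $(\phi,\psi)$ as a map determined by a single $1$-cochain $\chi \in C^1(L,V)$, and then translate the morphism conditions on brackets into a coboundary equation on $\chi$. Fix a section $s$ so that $\L = s(L) \oplus V$ as vector spaces. The conditions $\gamma|_V = \phi$ and $p \circ \gamma \circ s = \psi$ force $\chi(a) := \gamma s(a) - s\psi(a)$ to lie in $V$; hence every such $\gamma$ has the form
\[\gamma(s(a) + v) = s\psi(a) + \chi(a) + \phi(v), \qquad a \in L,\ v \in V.\]
Conversely, for any linear map $\chi : L \to V$, this formula defines a bijective linear map $\L \to \L$ (block lower-triangular with invertible diagonal blocks $\psi$ and $\phi$) restricting to $\phi$ on $V$ and projecting to $\psi$, so the whole question reduces to choosing $\chi$ so that $\gamma$ preserves both brackets.

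For the forward implication, I will expand $\gamma[s(a) + v, s(b) + w]_{\L}$ using \eqref{ind_cocycle-I} and the induced representation formulas \eqref{rep1}--\eqref{rep3}, and compare with $[\gamma(s(a)+v), \gamma(s(b)+w)]_{\L}$. The $L$-parts collapse to $s\psi[a,b]$ on both sides. The linear-in-$V$ cross terms $\phi\rho(a)w$ and $\rho(\psi a)\phi w$ (and the pair with $v$) cancel by compatibility (Lemma \ref{Lem_compatibility condition}), since $\rho'(a) = \rho(\psi a)$. The remaining $V$-valued balance reads
\[\phi\alpha(a,b) - \alpha(\psi a, \psi b) = \rho(\psi a)\chi(b) - \rho(\psi b)\chi(a) - \chi([a,b]).\]
Setting $\mu := \chi \circ \psi^{-1}$ and replacing $(a,b)$ by $(\psi^{-1}a, \psi^{-1}b)$, this is exactly
\[\bigl(\phi \circ \alpha \circ (\psi^{-1},\psi^{-1}) - \alpha\bigr)(a,b) = \delta_I \mu(a,b)\]
by \eqref{coboundary_map 3}. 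An entirely parallel expansion of the ternary bracket $\gamma\{s(a)+u, s(b)+v, s(c)+w\}_{\L}$, using all three compatibility identities of Lemma \ref{Lem_compatibility condition} to cancel the cross terms involving $D$ and $\theta$, produces $\phi \circ \beta \circ (\psi^{-1},\psi^{-1},\psi^{-1}) - \beta = \delta_{II}\mu$ via \eqref{coboundary_map 4}. Hence $(\alpha,\beta)$ and $(\phi\circ\alpha\circ(\psi^{-1},\psi^{-1}),\ \phi\circ\beta\circ(\psi^{-1},\psi^{-1},\psi^{-1}))$ differ by $\delta\mu$ and are cohomologous.

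For the converse, suppose there exists $\mu \in C^1(L,V)$ with $\delta\mu$ equal to that cocycle difference. Set $\chi := \mu \circ \psi$ and define $\gamma$ by the formula above. The computations of the forward direction, now read as a definition of the brackets rather than a consequence, show that the $V$-components of $\gamma[x,y]_{\L} - [\gamma x, \gamma y]_{\L}$ and $\gamma\{x,y,z\}_{\L} - \{\gamma x, \gamma y, \gamma z\}_{\L}$ vanish because of the coboundary equations on $\mu$; the $L$-components vanish automatically and the linear-in-$V$ cross terms vanish by compatibility. Hence $\gamma \in \mathrm{Aut}_V(\L)$, and by construction $\tau(\gamma) = (\phi,\psi)$.

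The main technical obstacle is the bookkeeping for the ternary case: \eqref{coboundary_map 2} and \eqref{coboundary_map 4} contain substantially more terms than their binary analogues, and one must verify that the three compatibility identities $\phi\rho(c) = \rho(\psi c)\phi$, $\phi D(a,b) = D(\psi a, \psi b)\phi$, and $\phi \theta(a,b) = \theta(\psi a,\psi b)\phi$ simultaneously eliminate every cross term between $s\psi(L)$ and $V$, leaving only the scalar coboundary relation on $\mu$. Once this verification is performed in parallel with the binary case, the theorem reduces to reading \eqref{coboundary_map 3}--\eqref{coboundary_map 4} at the points $(\psi a,\psi b)$ and $(\psi a,\psi b,\psi c)$.
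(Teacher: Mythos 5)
Your proposal is correct and follows essentially the same route as the paper's proof: in both directions the key object is the $1$-cochain $\gamma s - s\psi$ (respectively its composite with $\psi^{-1}$), the cocycle difference is identified with $\delta(\lambda\psi^{-1})$, and the converse constructs $\gamma(s(a)+v)=s\psi(a)+\lambda\psi(a)+\phi(v)$ exactly as in the paper. The only cosmetic difference is that you extract the coboundary relation from the $V$-component of the morphism identity on general elements (using compatibility to cancel the cross terms), while the paper applies $\gamma$ directly to $\alpha(\psi^{-1}a,\psi^{-1}b)\in V$; the content is the same.
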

\begin{proof}
	($\Longrightarrow$)
	Let $(\phi,\psi) \in \C$ be an inducible pair, then there exists an automoprhism $\gamma \in \mathrm{Aut}_V(\L)$ such that $\tau(\gamma ) = (\phi,\psi)$. Let $s: L \to \L$ be a section of $p$. Then, we have the following commutative diagram 
	\[\begin{tikzcd}
		0 & V & \L & L & 0 \\
		0 & V & \L & L & 0
		\arrow["i", from=1-2, to=1-3]
		\arrow["p", from=1-3, to=1-4]
		\arrow[from=1-4, to=1-5]
		\arrow[from=1-1, to=1-2]
		\arrow[from=2-1, to=2-2]
		\arrow[from=2-4, to=2-5]
		\arrow["i"', from=2-2, to=2-3]
		\arrow["p"', from=2-3, to=2-4]
		\arrow["{\gamma|_V = \phi }"', from=1-2, to=2-2]
		\arrow["\gamma", from=1-3, to=2-3]
		\arrow["{\psi = p \gamma s }", from=1-4, to=2-4].
	\end{tikzcd}\]  
	The goal is to show $(\alpha,\beta)$ and $ 
	\big(\phi \circ \alpha \circ (\psi^{-1},\psi^{-1})
	~,~ \phi \circ \beta \circ (\psi^{-1}\psi^{-1},\psi^{-1})\big)$ are cohomologous. 
	Since $ps = 1_L$, $ p\gamma s (a) = \psi (a) = ps\psi (a) \quad \implies \quad \gamma s (a) - s \psi (a) \in V$, for all $a \in L$. So, we define a map $\lambda: L \to V$ by 
	\[
		\lambda(a):= \gamma s(a) - s \psi(a).
	\]
	For all $a,b \in L$, we have the following expression
	\begin{align*}
		\phi \circ &\alpha \circ (\psi^{-1},\psi^{-1}) (a,b) 
		= \phi(\alpha(\psi^{-1}(a),\psi^{-1}(b))) = \gamma (\alpha(\psi^{-1}(a),\psi^{-1}(b))) \\
		&= \gamma\big([s \psi^{-1}(a),s \psi^{-1}(b)] - s[\psi^{-1}(a),\psi^{-1}(b)]\big) \\
		&= [\gamma s \psi^{-1}(a), \gamma s \psi^{-1}(b)] - \gamma s [\psi^{-1}(a),\psi^{-1}(b)] \\
		&= [\lambda \psi^{-1}(a) + s \psi(\psi^{-1}(a)), \lambda \psi^{-1}(b) + s \psi(\psi^{-1}(b))] \\
		& \quad \quad\quad \quad\quad\quad\quad\quad\quad \quad 
		- \lambda ([\psi^{-1}(a),\psi^{-1}(b)]) - s \psi([\psi^{-1}(a),\psi^{-1}(b)]) \\
		&= [s(a),\lambda \psi^{-1}(b)] - [s(b),\lambda \psi^{-1}(a)] + [s(a),s(b)] - \lambda([\psi^{-1}(a),\psi^{-1}(b)]) - s[a,b]	\\
		&= \delta_I(\lambda \psi^{-1})(a,b) + \alpha(a,b).
	\end{align*}
	Again, for all $a,b,c \in L$, we have
	\begin{align*}
		\phi \circ &\beta \circ (\psi^{-1},\psi^{-1},\psi^{-1}) (a,b,c) 
		= \gamma (\beta(\psi^{-1}(a),\psi^{-1}(b),\psi^{-1}(c))) \\
		&= 
		\gamma\big(\{s \psi^{-1}(a),s \psi^{-1}(b),s \psi^{-1}(c)\} - s\{\psi^{-1}(a),\psi^{-1}(b),\psi^{-1}(c)\}\big) \\
		&= 
		\{\gamma s \psi^{-1}(a), \gamma s \psi^{-1}(b), \gamma s \psi^{-1}(c)\} - \gamma s \{\psi^{-1}(a),\psi^{-1}(b),\psi^{-1}(c)\} \\
		&= 
		\{\lambda \psi^{-1}(a) + s \psi( \psi^{-1}(a)), \lambda \psi^{-1}(b) + s \psi( \psi^{-1}(b)), \lambda \psi^{-1}(c) + s \psi(\psi^{-1}(c))\} \\
		& \quad \quad
		- \lambda(\{\psi^{-1}(a),\psi^{-1}(b),\psi^{-1}(c)\}) - s \psi\{\psi^{-1}(a),\psi^{-1}(b),\psi^{-1}(c)\} \\
		&=
		\{\lambda \psi^{-1}(a),s(b),s(c)\} - \{\lambda \psi^{-1}(b), s(a),s(c)\} + \{s(a),s(b),\lambda \psi^{-1}(c)\} \\
		&\quad \quad
		- \lambda (\{\psi^{-1}(a),\psi^{-1}(b),\psi^{-1}(c)\}) - s(\{a,b,c\}) \\
		&= 
		\delta_{II} (\lambda \psi^{-1})(a,b,c) + \beta(a,b,c)
	\end{align*} 
	Therefore, the cocycles 
	\[
	(\alpha,\beta) ~~\text{and}~~ 
	\big(\phi \circ \alpha \circ (\psi^{-1},\psi^{-1})
	~,~ \phi \circ \beta \circ (\psi^{-1}\psi^{-1},\psi^{-1})\big)
	\]
	are cohomologous.
	
	\medskip
	\noindent
	$(\impliedby)$ Conversely, we assume that the cocycles $(\alpha,\beta)$ and 
	$\big(\phi \circ \alpha \circ (\psi^{-1},\psi^{-1})
	~,~ \phi \circ \beta \circ (\psi^{-1}\psi^{-1},\psi^{-1})\big)$ are cohomologous for $(\phi,\psi) \in \C$. Then, there exists $\lambda \in C^1(L,V)$, i.e, $\lambda: L \to V$  such that 
	\[
		\phi \circ \alpha \circ (\psi^{-1},\psi^{-1}) - \alpha = \delta_I(\lambda), \quad
		\phi \circ \beta \circ (\psi^{-1}\psi^{-1},\psi^{-1}) - \beta =
		\delta_{II}(\lambda).
	\]
This gives us the following expressions	
\begin{align} 
		& \phi \circ \alpha \circ (\psi^{-1},\psi^{-1}) (\psi(a),\psi(b)) - \alpha (\psi(a),\psi(b)) = \delta_I \lambda (\psi(a),\psi(b)) \nonumber \\
		\implies \quad & \phi \circ \alpha (a,b) - \alpha(\psi(a),\psi(b)) = [s \psi(a),\lambda \psi(b)]
		+ [\lambda \psi(a),s \psi(b)] - \lambda([\psi(a),\psi(b)])
	\end{align}
and 
	\begin{align}
		& \phi \circ \beta \circ (\psi^{-1}, \psi^{-1}, \psi^{-1}) (\psi(a),\psi(b),\psi(c)) - \beta (\psi(a),\psi(b), \psi(c))
		= \delta_{II} \lambda(\psi(a),\psi(b),\psi(c)) \nonumber\\
		\implies \quad & \phi \circ \beta(a,b,c) -\beta(\psi(a),\psi(b),\psi(c)) 
		 = \{\lambda \psi(a), s\psi(b), s\psi(c)\} - \{\lambda \psi(b), s \psi(a), s \psi(c)\}  \nonumber \\
		&~~ \quad\quad\quad\quad\quad\quad\quad\quad\quad\quad\quad\quad\quad\quad\quad\quad\quad \quad \quad 
		+ \{s \psi(a),s \psi(b), \lambda \psi(c)\}- \lambda(\psi(a),\psi(b),\psi(c)),  \label{equ_2}
	\end{align}
for all $a,b,c\in L$. Let $s:L \to \L$ be a section of $p$. we know that as a vector space $\L \cong V \oplus s(L)$. So, any element $\tilde{l} \in \L$ can be uniquely represented as $\tilde{l} = v + s(a)$ for some $v \in V$ and $a \in L$. We now want to show $(\phi,\psi) \in \C$ is inducible. So we define a map $\gamma: \L \to \L$ as follows: 
	\[
		\gamma(v + s(a)) := \phi(v) + \lambda \psi(a) + s \psi(a).
	\]   
	It is clear from the definition of $\gamma$ that $\tau(\gamma) = (\phi,\psi)$, i.e., $\gamma$ induces the pair $(\phi,\psi)$.
	
	\vspace{0.1in}
	Now, it only remains to check $\gamma \in \mathrm{Aut}_V(\L)$. Injectivity of $\gamma$ follows from the injectivity of $s, \phi, \psi$. To see $\gamma$ is surjective, we take an arbitrary element of $v + s(a) \in \L$. Then 
	\[
		\gamma\big(\phi^{-1}(v - \lambda(a)) + s\psi^{-1}(a)\big) = v+s(a).
	\] 
	So, the map $\gamma$ is surjective. We also need to  show that $\gamma$ is a Lie-Yamaguti algebra morphism, i.e., $\gamma$ preserves the brackets. For that let $\tilde{l}_1 = v_1 + s(a_1)$ and $\tilde{l}_2 = v_2 + s(a_2)$ be two elements in $L$, then 
	\begin{align*}
		\gamma[\tilde{l}_1,\tilde{l}_2] 
		&= \gamma([v_1 + s(a_1),v_2+s(a_2)])
		= \gamma([v_1,s(a_2)]) + \gamma([s(a_1),v_2]) + \gamma([s(a_1),s(a_2)]) \\
		&= \phi([s(a_1),v_2]) - \phi([s(a_2),v_1]) + \gamma(\alpha(a_1,a_2) + s[a_1,a_2]) \\
		&= 
		\phi \rho(a_1) (v_2) - \phi \rho(a_2) (v_1) + \phi \alpha(a_1,a_2) + \gamma s[a_1,a_2].
	\end{align*}
	Moreover,   
	\begin{align*}
		[\gamma(\tilde{l}_1),\gamma(\tilde{l}_2)] 
		&= [\gamma(v_1+s(a_1)),\gamma(v_2+s(a_2))] \\
		&= [\phi(v_1)+\lambda \psi(a_1) + s \psi(a_1)~,~\phi(v_2)+\lambda \psi(a_2) + s \psi(a_2)] \\
		&= [\phi(v_1),s \psi(a_2)] + [\lambda \psi(a_1), s\psi(a_2)] 
		+ [s \psi(a_1),\phi(v_2)] 
		+ [s \psi(a_1),\lambda \psi(a_2)] \\
		&\quad \quad
		+ [s \psi(a_1),s \psi(a_2)]\\
		&= 
		\rho(\psi(a_1))(\phi(v_2)) - \rho(\psi(a_2))(\phi(v_1))
		+ \phi \alpha (a_1,a_2) - \alpha(\psi(a_1),\psi(a_2)) \\
		& \quad \quad
		+ \lambda([\psi(a_1),\psi(a_2)]) + \alpha(\psi(a_1),\psi(a_2)) + s[\psi(a_1),\psi(a_2)] \\
		&=
		\rho(\psi(a_1))(\phi(v_2)) - \rho(\psi(a_2))(\phi(v_1))
		+ \phi \alpha (a_1,a_2) + \lambda \psi[a_1,a_2] + s \psi[a_1,a_2] \\
		&=
		\rho(\psi(a_1))(\phi(v_2)) - \rho(\psi(a_2))(\phi(v_1))
		+ \phi \alpha (a_1,a_2) + \gamma s[a_1,a_2].
	\end{align*}
	From the compatibility condition of $(\phi,\psi)$, we have 
	\[
		\gamma[\tilde{l}_1,\tilde{l}_2] = [\gamma(\tilde{l}_1),\gamma(\tilde{l}_2)].
	\]
	Similarly, $\gamma\{\tilde{l}_1,\tilde{l}_2,\tilde{l}_3\} = \{\gamma(\tilde{l}_1),\gamma(\tilde{l}_2),\gamma(\tilde{l}_3)\}$ follows from equation \eqref{equ_2}. 
	Hence, $\gamma \in \mathrm{Aut}_V(\L)$ with $\tau(\gamma) = (\phi,\psi)$, which shows $(\phi,\psi)$ is an inducible pair. 
\end{proof}

\bigskip

\section{\large Wells exact sequence and inducibility problem}\label{sec-5}

\medskip

This section introduces the Wells map associated with an abelian extension of Lie-Yamaguti algebras. Then we develop an analog of the classical Wells exact sequence known for group extensions (see \cite{jin,passi,robinson,wells}) in the context of Lie-Yamaguti algebras. In the sequel, we discuss the inducibility problem for Lie-Yamaguti algebra extensions in terms of the Wells map. Finally, we consider two other relevant questions related to the inducibility problem. As an application to these questions, we describe certain automorphism groups of semi-direct Lie-Yamaguti algebras.

\medskip

Let $0 \to V \xrightarrow[]{i} \L \xrightarrow[]{p} L \to 0$ be an abelian extension of $L$ by $V$, with $s$ being a section of $p$. Let $(\rho,D,\theta;V)$ be the induced representation of $L$ on $V$, and $(\alpha,\beta)$ be the induced cocycle. For any $(\phi,\psi) \in \mathrm{Aut}(V) \times \mathrm{Aut}(L)$, we define $\alpha_{\phi,\psi} \in C^2(L,V)$ and $\beta_{\phi,\psi} \in C^3(L,V)$ by 
\[\alpha_{\phi,\psi}(a,b) := \phi \alpha(\psi^{-1}(a),\psi^{-1}(b)) - \alpha(a,b),\]  
\[\beta_{\phi,\psi}(a,b,c) := \phi \beta(\psi^{-1}(a),\psi^{-1}(b),\psi^{-1}(c)) - \beta(a,b,c).\]

\medskip

\noindent
Now, we define a set map $\mathcal{W}: \mathrm{Aut}(V) \times \mathrm{Aut}(L) \to \mathrm{H}^{(2,3)}(L,V)$ by the following formula
\[\mathcal{W}((\phi,\psi)) = [(\alpha_{\phi,\psi},\beta_{\phi,\psi})],\]
the cohomology class of $(\alpha_{\phi,\psi},\beta_{\phi,\psi})$. The map $\mathcal{W}$ is called the \textbf{Wells map}. In general, $\mathcal{W}$ may not be a group homomorphism. From Lemma \ref{Lem_ind of sec},  it is evident that the Wells map $\mathcal{W}$ does not depend on the choice of a section of $p$.

\begin{Rem}
It follows from Theorem \ref{Thm_cmptbl_indcbl} that a compatible pair $(\phi,\psi) \in \mathcal{C}$ is inducible if and only if $\mathcal{W}((\phi,\psi))$ is trivial. Thus, $\mathcal{W}((\phi,\psi))$ can be seen as an obstruction for inducibility of the pair $(\phi,\psi)$.   
\end{Rem}

\noindent Let us introduce a few notations that will be used in the rest of the paper.
\[\mathrm{Aut}_V^L(\L) := \{\gamma \in \mathrm{Aut}_V(\L): \bar{\gamma} = id_L\}, \quad
\mathrm{Aut}^{V,L} (\L) := \{\gamma \in \mathrm{Aut}_V(\L): \gamma|_V = id_V,~ \bar{\gamma} = id_L\},
\] 
\[
\mathrm{Aut}^V(\L) := \{\gamma \in \mathrm{Aut}_V(\L): \gamma|_V\ = id_V\}.
\] 

\smallskip

We now develop the Wells exact sequence for Lie-Yamaguti algebra extensions. First, we show that the first cohomology group $\mathrm{H}^1(L,V)$ 
is isomorphic to the automorphism group $\mathrm{Aut}^{V,L}(\L)$.

\begin{Lem} \label{Lem_Wells_Iso}
Let $0 \to V \xrightarrow[]{i} \L \xrightarrow[]{p} L \to 0$ be an abelian extension of $L$ by $V$. Then, we have the following isomorphism
\[\mathrm{Aut}^{V,L}(\L) \cong \mathrm{H}^1(L,V).\] 	
\end{Lem}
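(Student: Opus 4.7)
The plan is to construct an explicit group isomorphism $\Phi : \mathrm{Aut}^{V,L}(\tilde{L}) \to \mathrm{H}^1(L,V)$ by exploiting the vector space decomposition $\tilde{L} \cong V \oplus s(L)$ coming from a fixed section $s$ of $p$. For any $\gamma \in \mathrm{Aut}^{V,L}(\tilde{L})$, the condition $\bar{\gamma} = \mathrm{id}_L$ gives $p(\gamma s(a) - s(a)) = 0$, so $\gamma s(a) - s(a) \in V$. This lets me define $\Phi(\gamma) : L \to V$ by $\Phi(\gamma)(a) := \gamma s(a) - s(a)$.

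The key step is to verify that $\lambda := \Phi(\gamma)$ is a derivation, i.e., $\delta_I \lambda = 0$ and $\delta_{II}\lambda = 0$ (recall $\mathrm{H}^1(L,V) = \mathrm{Der}(L,V)$). Using $\gamma|_V = \mathrm{id}_V$, one has $\gamma(v + s(a)) = v + \lambda(a) + s(a)$. Expanding $\gamma[v_1 + s(a_1), v_2 + s(a_2)]_{\tilde{L}} = [\gamma(v_1 + s(a_1)), \gamma(v_2 + s(a_2))]_{\tilde{L}}$ in terms of the induced representation maps \eqref{rep1}--\eqref{rep3}, the induced cocycle $(\alpha, \beta)$ from \eqref{ind_cocycle-I}--\eqref{ind_cocycle-II}, and the formulas $[V,V]_{\tilde{L}} = 0$, the $\alpha$-terms and section terms cancel from both sides, leaving precisely $\lambda[a_1,a_2] = \rho(a_1)\lambda(a_2) - \rho(a_2)\lambda(a_1)$, which is $\delta_I\lambda(a_1,a_2) = 0$ by \eqref{coboundary_map 3}. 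An analogous but longer expansion of $\gamma\{v_1+s(a_1),v_2+s(a_2),v_3+s(a_3)\}_{\tilde{L}} = \{\gamma(\cdot),\gamma(\cdot),\gamma(\cdot)\}_{\tilde{L}}$, using $\{V,V,L\}_{\tilde{L}} = \{V,L,V\}_{\tilde{L}} = \{L,V,V\}_{\tilde{L}} = 0$, yields $\delta_{II}\lambda = 0$ from \eqref{coboundary_map 4}. Conversely, given $\lambda \in \mathrm{Der}(L,V)$, define $\Psi(\lambda) : \tilde{L} \to \tilde{L}$ by $\Psi(\lambda)(v + s(a)) := v + \lambda(a) + s(a)$; the same two computations, read backwards, show $\Psi(\lambda)$ preserves both brackets, and bijectivity is immediate from the explicit inverse $v + s(a) \mapsto v - \lambda(a) + s(a)$. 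Clearly $\Psi(\lambda)|_V = \mathrm{id}_V$ and $\overline{\Psi(\lambda)} = \mathrm{id}_L$, so $\Psi(\lambda) \in \mathrm{Aut}^{V,L}(\tilde{L})$.

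Finally, I verify $\Phi$ and $\Psi$ are mutually inverse (this is immediate from the formulas) and that $\Phi$ is a group homomorphism: for $\gamma_1, \gamma_2 \in \mathrm{Aut}^{V,L}(\tilde{L})$ with $\lambda_i = \Phi(\gamma_i)$, a direct computation gives
\[
(\gamma_1\gamma_2) s(a) = \gamma_1(\lambda_2(a) + s(a)) = \lambda_2(a) + \lambda_1(a) + s(a),
\]
so $\Phi(\gamma_1 \gamma_2) = \lambda_1 + \lambda_2$, matching the additive group structure on $\mathrm{H}^1(L,V)$. I also note that $\Phi$ is independent of the choice of section: if $s'$ is another section, then $s(a) - s'(a) \in V$, and since $\gamma|_V = \mathrm{id}_V$ we get $\gamma s(a) - s(a) = \gamma s'(a) - s'(a)$.

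The main obstacle is the bookkeeping in the ternary-bracket expansion needed to extract $\delta_{II}\lambda = 0$. The expansion of $\{v_1 + s(a_1), v_2 + s(a_2), v_3 + s(a_3)\}_{\tilde{L}}$ produces seven nonzero terms (one $\beta$-term, one $s$-image term, and three each of $D$- and $\theta$-type terms mixing $V$ and $s(L)$), and I must carefully track which terms pick up a $\lambda$-contribution after applying $\gamma$ on the left versus on the right to confirm that the residual equation is exactly the derivation condition coming from \eqref{coboundary_map 4}. Everything else is formal manipulation once this compatibility is in hand.
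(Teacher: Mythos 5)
Your proposal is correct and follows essentially the same route as the paper: fix a section $s$, send $\gamma\mapsto\lambda_\gamma(a)=\gamma s(a)-s(a)$, verify the derivation conditions $\delta_I\lambda_\gamma=0=\delta_{II}\lambda_\gamma$ using $\gamma|_V=\mathrm{id}_V$ and the abelianness of $V$, and recover $\gamma$ from $\lambda$ via $\gamma(v+s(a))=v+\lambda(a)+s(a)$. Your packaging of bijectivity as an explicit two-sided inverse $\Psi$ is only a cosmetic variation of the paper's separate injectivity/surjectivity checks, since the paper's surjectivity argument constructs exactly the same map.
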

\begin{proof}
Let us fix a section $s$ of $p$, i.e., a linear map $s:L\rightarrow \L$. For any $\gamma \in \mathrm{Aut}^{V,L}(\L)$, we define a map $\lambda_{\gamma} : L \to V$ as follows 
\[\lambda_{\gamma}(a) = \gamma s (a) - s(a).\]
Since $\gamma \in \mathrm{Aut}^{V,L}(\L)$, we have $\gamma (\tilde{l}) - \tilde{l} \in V$  for all $\tilde{l} \in \L$. Thus, $\lambda_{\gamma}$ is well defined. Now, we show that $\lambda_{\gamma} \in Z^1(L,V)$. For $a,b \in L$,
\begin{align*}
\lambda_{\gamma}[a,b] &= \gamma s[a,b] - s[a,b] 
= \gamma\big([s(a),s(b)] - \alpha(a,b)\big) + \alpha(a,b) - [s(a),s(b)] \\
&= [\gamma s(a),\gamma s(b)] - [s(a),s(b)] 
= [\lambda_{\gamma}(a) + s(a),\lambda_{\gamma}(b)+s(b)] - [s(a),s(b)] \\
&= [\lambda_{\gamma}(a),s(b)] + [s(a),\lambda_{\gamma}(b)].
\end{align*}  
From this gives us the following expression, 
\[\delta_I (\lambda_{\gamma})(a,b) = [s(a),\lambda_{\gamma}(b)] - [s(b),\lambda_{\gamma}(a)] - \lambda_{\gamma}[a,b] = 0.\]
By a similar computation one can show $\delta_{II} (\lambda_{\gamma})(a,b,c) = 0$. Hence,  $\lambda_{\gamma} \in \mathrm{H}^1(L,V)$. This leads us to define a function 
\[\chi: \mathrm{Aut}^{V,L}(\L) \to \mathrm{H}^1(L,V) , \quad  \chi(\gamma) := \lambda_{\gamma}.\]
We now show that the map $\chi$ is a group isomorphism.

\smallskip

\noindent 	
\textit{$\chi$ is a group homomorphism:}
Let $\gamma_1,~ \gamma_2 \in \mathrm{Aut}^{V,L}(\L)$. Then, for any $a \in L$, we have
\begin{align*}
\lambda_{\gamma_1\circ \gamma_2}(a) 
&= \gamma_1 \circ \gamma_2 s (a) - s(a) = \gamma_1 \circ \gamma_2 s(a) - \gamma_1 s(a) + \gamma_1 s(a) - s(a) \\
&= \gamma_1(\gamma_2 s(a) - s(a)) + \gamma_1 s(a) - s(a) \\
&= \gamma_1(\lambda_{\gamma_2}(a)) + \lambda_{\gamma_1} (a)
= \lambda_{\gamma_2}(a) + \lambda_{\gamma_1}(a).
\end{align*} 
Note that $\gamma_1(\lambda_{\gamma_2}(a)) = \lambda_{\gamma_2}(a)$ since $\gamma_1 \in \mathrm{Aut}^{V,L}(\L)$ and $\lambda_{\gamma_2}(a) \in V$. Therefore, $\lambda_{\gamma_1\circ \gamma_2} = \lambda_{\gamma_1} + \lambda_{\gamma_2}$, i.e., $\chi$ is a group homomorphism. 
	
\smallskip
\noindent
\textit{$\chi$ is an injective map:}
Let $\gamma \in \mathrm{Aut}^{V,L}(\L)$ with $\chi(\gamma) = 0$. Then for all $a \in L$ and $v \in V$, 
\[
\gamma s(a) = s(a) \quad\text{and}\quad \gamma(v) = v.
\]
Since $\L \cong V \oplus s(L)$ as a vector space, any element $\tilde{l} \in \L$ can be uniquely expressed in the form $\tilde{l} = v + s(a)$. Then, $\gamma(\tilde{l}) = \gamma(v + s(a)) = v + s(a) = \tilde{l}$, which implies that $\gamma = 1_{\L}$. So $\mathrm{Ker}(\chi) = 1_{\L}$. Therefore, the map $\chi$ is injective. 
	
\smallskip
\noindent
\textit{$\chi$ is a surjective map:}
Let $\lambda \in \mathrm{H}^1(L,V)$. We define a map  
\[
\gamma: \L \to \L, \quad \gamma(v + s(a)) := v + \lambda(a) + s(a).
\]   
It is clear from the definition that $\chi(\gamma) = \lambda$. We only need to check that $\gamma \in \mathrm{Aut}^{V,L}(\L)$. From the definition of $\gamma$ one has $\gamma|_V = 1_V$ and $p \gamma s = 1_L$. Now to show the injectivity of the map $\gamma$, let us consider the following expression
\[
\gamma(v+s(a)) = 0 \quad \implies \quad s(a) = 0 \quad \implies \quad a = 0, 
\]
which, in turn, implies that $v = 0$. So $\gamma$ is injective. Also, 
\[
\gamma(v - \lambda(a) + s(a)) = v + s(a).
	\]
	implies $\gamma$ is surjective. Thus, $\gamma: \L \to \L$ is a bijective map. It remains to show that the map $\gamma$ preserves the two brackets. First, we recall the definition of $\alpha$ and $\beta$. For $a,b,c \in L$,
	\[
	\alpha(a,b) = [s(a),s(b)] - s[a,b], \quad \mbox{and} \quad\beta(a,b,c) = \{s(a),s(b),s(c)\} - s\{a,b,c\}.
	\]
Let $\tilde{l}_1,\tilde{l}_2 \in \L$, then we know that $\tilde{l}_1 = v_1 + s(a_1)$, $\tilde{l}_2 = v_2 + s(a_2)$. Let us consider the following expression 
	\begin{align*}
		\gamma[\tilde{l}_1,\tilde{l}_2] 
		&= \gamma[v_1 + s(a_1),v_2+s(a_2)] 
		= \gamma\big([v_1,s(a_2)] + [s(a_1),v_2] + [s(a_1),s(a_2)]\big) \\
		&= [v_1,s(a_2)] + [s(a_1),v_2] + \gamma[s(a_1),s(a_2)] ~~~(\text{as}~ \gamma|_V =1_V) \\
		&= [v_1,s(a_2)] + [s(a_1),v_2] + \alpha(a_1,a_2) + \gamma s[a_1,a_2] \\
		&= [v_1,s(a_2)] + [s(a_1),v_2] + \alpha(a_1,a_2) + \lambda[a_1,a_2] + s[a_1,a_2] \\
		&= [v_1,s(a_2)] + [s(a_1),v_2] + \alpha(a_1,a_2) + 
		[\lambda(a_1),s(a_2)] + [s(a_1),\lambda(a_2)] + s[a_1,a_2] \\
		&= [v_1,s(a_2)] + [s(a_1),v_2] + [s(a_1),s(a_2)] + 
		[\lambda(a_1),s(a_2)] + [s(a_1),\lambda(a_2)] \\
		&= [\gamma(\tilde{l}_1), \gamma(\tilde{l}_2)].
	\end{align*} 
	By a similar calculation, it follows that $\gamma\{\tilde{l}_1,\tilde{l}_2,\tilde{l}_3\} = \{\gamma(\tilde{l}_1),\gamma(\tilde{l}_2),\gamma(\tilde{l}_3)\}$. Thus, $\gamma \in \mathrm{Aut}^{V,L}(\L)$.  
\end{proof}

\medskip
With these notations, we construct the Wells exact sequence for Lie-Yamaguti algebra extensions in the next result. 

\begin{Thm} \label{Thm_Wells_Sq}
Let $0 \to V \xrightarrow[]{i} \L \xrightarrow[]{p} L \to 0$ be an abelian extension of $L$ by $V$, and $\mathcal{W}$ be the associated Wells map. Then, there is an exact sequence 
\[\begin{tikzcd}
0 & {\mathrm{H}^1(L,V)} & {\mathrm{Aut}_V(\L)} & \mathcal{C} & {\mathrm{H}^{(2,3)}(L,V)}
\arrow[from=1-1, to=1-2]
\arrow["i", from=1-2, to=1-3]
\arrow["\tau", from=1-3, to=1-4]
\arrow["\mathcal{W}", from=1-4, to=1-5].
\end{tikzcd}\]
This sequence is known as \textbf{Wells exact sequence}. 
\end{Thm}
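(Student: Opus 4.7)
The plan is to verify exactness at each of the three non-trivial positions in the sequence, interpreting each map carefully and using the earlier results of the paper.

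First, I would pin down what the map labeled $i$ actually is. By Lemma \ref{Lem_Wells_Iso}, there is an isomorphism $\chi: \mathrm{Aut}^{V,L}(\L) \xrightarrow{\cong} \mathrm{H}^1(L,V)$, so I interpret the arrow $\mathrm{H}^1(L,V) \to \mathrm{Aut}_V(\L)$ as the composition of $\chi^{-1}$ with the natural inclusion $\mathrm{Aut}^{V,L}(\L) \hookrightarrow \mathrm{Aut}_V(\L)$. Exactness at $\mathrm{H}^1(L,V)$ (injectivity of $i$) is then immediate: $\chi^{-1}$ is an isomorphism and the inclusion is injective.

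For exactness at $\mathrm{Aut}_V(\L)$, I would compute the kernel of $\tau$. By definition, $\tau(\gamma) = (\gamma|_V, \bar\gamma)$, so $\tau(\gamma) = (\mathrm{id}_V, \mathrm{id}_L)$ exactly when $\gamma|_V = \mathrm{id}_V$ and $\bar\gamma = \mathrm{id}_L$, i.e. when $\gamma \in \mathrm{Aut}^{V,L}(\L)$. This subgroup is precisely the image of $i$ by construction (using $\chi$ from Lemma \ref{Lem_Wells_Iso}). Hence $\mathrm{Ker}(\tau) = \mathrm{Im}(i)$.

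For exactness at $\mathcal{C}$, I need to show that $\mathrm{Im}(\tau) = \mathrm{Ker}(\mathcal{W})$. A pair $(\phi,\psi) \in \mathcal{C}$ lies in $\mathrm{Im}(\tau)$ precisely when it is inducible, so by Theorem \ref{Thm_cmptbl_indcbl} this holds if and only if the cocycles $(\alpha,\beta)$ and $\bigl(\phi\circ\alpha\circ(\psi^{-1},\psi^{-1}),\,\phi\circ\beta\circ(\psi^{-1},\psi^{-1},\psi^{-1})\bigr)$ are cohomologous, which is exactly the condition that $\mathcal{W}((\phi,\psi)) = [(\alpha_{\phi,\psi},\beta_{\phi,\psi})] = 0$ in $\mathrm{H}^{(2,3)}(L,V)$. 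Thus $\mathrm{Im}(\tau) = \mathrm{Ker}(\mathcal{W})$.

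The main substantive input is Theorem \ref{Thm_cmptbl_indcbl}, which already encodes the hardest cocycle computation; together with Lemma \ref{Lem_Wells_Iso}, the rest is a straightforward unwinding of definitions. The only small subtlety is checking that the image of $\tau$ really lands in $\mathcal{C}$ (so that the sequence is well defined at that spot), but this is exactly Lemma \ref{Lem_compatibility condition}. I expect no significant obstacles: the proof should be a clean three-step verification once the interpretation of the map $i$ via Lemma \ref{Lem_Wells_Iso} is made explicit.
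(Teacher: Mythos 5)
Your proposal is correct and follows essentially the same route as the paper: identify the first map with the inclusion of $\mathrm{Aut}^{V,L}(\L)$ via the isomorphism of Lemma \ref{Lem_Wells_Iso}, observe that $\mathrm{Ker}(\tau)=\mathrm{Aut}^{V,L}(\L)$, and use Theorem \ref{Thm_cmptbl_indcbl} to identify $\mathrm{Im}(\tau)$ with $\mathrm{Ker}(\mathcal{W})$. Your explicit remark that $\mathrm{Im}(\tau)\subseteq\mathcal{C}$ (via Lemma \ref{Lem_compatibility condition}) is a point the paper makes just before the theorem rather than inside its proof, but it is the same argument.
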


\begin{proof}
The map $i$ is the inclusion map via the isomorphism of Lemma \ref{Lem_Wells_Iso}, so exactness at the first term is trivial.
	
\smallskip
\noindent
\textit{Exactness at the second term:} If $\gamma \in \mathrm{Ker}(\tau)$, then $\tau(\gamma) = (1_V,1_L)$. Thus, $\gamma \in \mathrm{Aut}^{V,L}(\L)\cong \mathrm{H}^1(L,V)$, which implies that $Ker(\tau) \subseteq \mathrm{H}^1(L,V)$. Conversely, let $\gamma \in \mathrm{H}^1(L,V) \cong \mathrm{Aut}^{V,L}(\L)$, then it follows that $\tau(\gamma) = (1_V,1_L)$. Therefore, $\mathrm{H}^1(L, V) \subseteq Ker(\tau)$ and we have the exactness at the second term.   
	
\smallskip
\noindent
\textit{Exactness at the third term:} 
If $(\phi,\psi) \in Ker(\mathcal{W})$, then $(\phi,\psi)$ is a $(2,3)$-coboundary. Theorem \ref{Thm_cmptbl_indcbl} shows that the pair is inducible. Let $\tau(\gamma) = (\phi,\psi)$ for some $\gamma \in \mathrm{Aut}_V(\L)$. Thus, $Ker(\mathcal{W}) \subseteq  Im(\tau)$. Conversely, if $(\phi,\psi) \in Im(\tau)$, then $(\phi,\psi)$ is inducible. From Theorem \ref{Thm_cmptbl_indcbl}, $(\alpha_{\phi,\psi},\beta_{\phi,\psi})$ is a coboundary, which implies that $\mathcal{W}(\phi,\psi) = 0$. So, $Im(\tau) \subseteq Ker(\mathcal{W})$. Subsequently, the sequence is exact at the third term.    
\end{proof}

\medskip

\begin{Cor} \label{Comp_iff_Ind}
If $0 \to V \xrightarrow[]{i} \L \xrightarrow[]{p} L \to 0$ is a split extension of Lie-Yamaguti algebras, then every compatible pair $(\phi,\psi) \in \mathcal{C}$ is inducible. 
\end{Cor}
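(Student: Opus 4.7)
The plan is to exploit the fact that a split extension admits a section $s: L \to \L$ that is itself a Lie-Yamaguti algebra homomorphism. First I would observe that for such a section the induced cocycle $(\alpha,\beta)$ defined in \eqref{ind_cocycle-I}--\eqref{ind_cocycle-II} is identically zero, since
\[
\alpha(a,b) = [s(a),s(b)]_{\L} - s([a,b]) = 0, \qquad \beta(a,b,c) = \{s(a),s(b),s(c)\}_{\L} - s(\{a,b,c\}) = 0,
\]
by the multiplicativity of $s$ on both the binary and the ternary bracket.

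Next I would take an arbitrary compatible pair $(\phi,\psi) \in \mathcal{C}$ and compute the Wells obstruction $\mathcal{W}((\phi,\psi)) = [(\alpha_{\phi,\psi}, \beta_{\phi,\psi})]$. Using the formulas
\[
\alpha_{\phi,\psi}(a,b) = \phi\,\alpha(\psi^{-1}(a),\psi^{-1}(b)) - \alpha(a,b), \quad \beta_{\phi,\psi}(a,b,c) = \phi\,\beta(\psi^{-1}(a),\psi^{-1}(b),\psi^{-1}(c)) - \beta(a,b,c),
\]
and substituting $\alpha \equiv 0$, $\beta \equiv 0$ from the previous step, both $\alpha_{\phi,\psi}$ and $\beta_{\phi,\psi}$ vanish identically. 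Consequently $\mathcal{W}((\phi,\psi)) = 0$ in $\mathrm{H}^{(2,3)}(L,V)$.

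Finally I would invoke Theorem \ref{Thm_cmptbl_indcbl} (equivalently, exactness at the third term of the Wells sequence in Theorem \ref{Thm_Wells_Sq}): since $(\phi,\psi)$ is compatible and the obstruction cocycle is zero, the two cocycles $(\alpha,\beta)$ and $(\phi \circ \alpha \circ (\psi^{-1},\psi^{-1}), \phi \circ \beta \circ (\psi^{-1},\psi^{-1},\psi^{-1}))$ are trivially cohomologous, so $(\phi,\psi) \in \tau(\mathrm{Aut}_V(\L))$, i.e. inducible. There is essentially no obstacle here; the only subtlety is to check that the independence of the Wells map from the choice of section (noted in the paragraph following its definition) allows us to use the distinguished homomorphic section coming from the splitting, so that the vanishing of $(\alpha,\beta)$ for this particular section already forces $\mathcal{W}((\phi,\psi)) = 0$ for every compatible pair.
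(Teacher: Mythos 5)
Your proposal is correct and follows essentially the same route as the paper: use the homomorphic section coming from the splitting to see that the induced cocycle $(\alpha,\beta)$ vanishes, conclude that $\alpha_{\phi,\psi}$ and $\beta_{\phi,\psi}$ vanish so $\mathcal{W}((\phi,\psi))=0$ for every compatible pair, and then apply Theorem \ref{Thm_cmptbl_indcbl} (equivalently, exactness of the Wells sequence at $\mathcal{C}$). Your explicit remark that the section-independence of $\mathcal{W}$ (Lemma \ref{Lem_ind of sec}) justifies computing with the distinguished splitting section is a point the paper leaves implicit, but it is not a different argument.
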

\begin{proof}
An extension $0 \to V \xrightarrow[]{i} \L \xrightarrow[]{p} L \to 0$ is called a split extension if there is a section $s$ of $p$ which is a Lie-Yamaguti algebra morphism. Hence both $\alpha_{\phi,\psi}$ and $\beta_{\phi,\psi}$ are zero, giving 
\[\mathrm{Ker}(\mathcal{W}) = \mathcal{C}.\] 
The Wells exact sequence implies that $\mathrm{Im}(\tau)=\mathrm{Ker}(\mathcal{W}) = \mathcal{C}$. 
\end{proof}

\bigskip

\noindent 
\textbf{{Some other questions related to the inducibility problem:}}  

\smallskip
\noindent
Let ${\L}$ be an abelian extension of a Lie-Yamaguti algebra $L$ by $V$. Then, we address the following questions about lifting automorphisms in an abelian extension.   

\medskip
\noindent
\textbf{Question I.} When is a pair $(\phi,1) \in \C$ inducible? In other words, can we lift an automorphism of $L$ to an automorphism of $\L$ fixing $V$ point-wise?
	
\medskip
\noindent	
\textbf{Question II.} When is a pair $(1, \psi) \in \C$ inducible? In other words, can we extend an automorphism of $V$ to an automorphism of $\L$ such that the extension induces the identity map on $L$, i.e., $p \gamma s = 1_L$?

\bigskip

\noindent
Generally, for any $\phi \in \mathrm{Aut}(V)$ (respectively $\psi \in \mathrm{Aut}(L)$) we do not have $(\phi,1) \in \C$ (respectively $(1,\psi) \in \C$). So, we define the following collections of compatible pairs
\[
\mathcal{C}_1 :=\{\phi \in \mathrm{Aut}(V): (\phi,1) \in \C\}
\quad \mbox{and}\quad
\mathcal{C}_2 := \{\psi \in \mathrm{Aut}(L): (1,\psi) \in \C\}.
\]
The sets $\C_1$ and $\C_2$ are subgroups of $\mathrm{Aut}(V)$ and $\mathrm{Aut}(L)$, respectively. Now, to answer Question I and Question II, we consider the following sequences 
\[\begin{tikzcd}
	& 0 & {\mathrm{Aut}^{V,L}(\L)} & {\mathrm{Aut}_V^L(\L)} & {\mathcal{C}_1} & {\mathrm{H}^{(2,3)}(L,V)}, & (A)\\
	& 0 & {\mathrm{Aut}^{V,L}(\L)} & {\mathrm{Aut}^V(\L)} & {\mathcal{C}_2} & {\mathrm{H}^{(2,3)}(L,V)}. & (B)\\
	\arrow[from=1-2, to=1-3]	
	\arrow["i_1", from=1-3, to=1-4]
	\arrow["{\tau_1}", from=1-4, to=1-5]
	\arrow["{\lambda_1}", from=1-5, to=1-6]
	\arrow[from=2-2, to=2-3]
	\arrow["i_2", from=2-3, to=2-4]
	\arrow["{\tau_2}", from=2-4, to=2-5]
	\arrow["{\lambda_2}", from=2-5, to=2-6]
\end{tikzcd}
\vspace{-0.3in}\]
where $i_1, i_2$ are inclusions maps and $\tau_1, \tau_2, \lambda_1$, and $\lambda_2$ are defined as follows.

\medskip
\noindent \textbf{1. Definition of the maps $\tau_1$ and $\tau_2$:} 

\smallskip

\noindent For $\gamma \in \mathrm{Aut}_V^L(\L)$ such that $\tau(\gamma) = (\phi,1)$, we define $\tau_1(\gamma):= \phi$. For $\gamma \in \mathrm{Aut}^V(\L)$ such that $\tau(\gamma) = (1,\psi)$, we define $\tau_2(\gamma) := \psi$. These definitions make sense because Lemma \ref{Lem_compatibility condition} shows that $(\phi,1)$ and $(1,\psi)$ are compatible pairs, i.e., $\phi\in \C_1$ and $\psi
\in \C_2$.

\medskip

\noindent  \textbf{2. Definition of the maps $\lambda_1$ and $\lambda_2$:}
\[
	\lambda_1(\phi) := \big[(\phi \alpha - \alpha,~ \phi \beta - \beta)\big], 
	\quad
	\lambda_2(\psi) := \big[\big(\alpha \circ (\psi^{-1},\psi^{-1}) - \alpha,~ \beta \circ (\psi^{-1},\psi^{-1},\psi^{-1}) - \beta\big)\big], 
\]  
where $[\cdot,\cdot]$ denotes the cohomology class in $\mathrm{H}^{(2,3)}(L,V)$. The above definition makes sense only if the following conditions hold.

\smallskip
 
\begin{itemize}
	\item $(\phi \alpha - \alpha ~,~ \phi \beta - \beta)$ and $(\alpha \circ (\psi^{-1},\psi^{-1}) - \alpha ~,~ \beta \circ (\psi^{-1},\psi^{-1},\psi^{-1}) - \beta)$ should be $(2,3)$-cocycles. 
	
	\smallskip
		
	\item Definition of $\lambda_1$ and $\lambda_2$ do not depend on the choice of the section $s$.
	\end{itemize} 

\medskip

\noindent
The following lemmas ensure that $\lambda_1$ and $\lambda_2$ are well-defined. 

\begin{Lem}
For any $\phi \in \C_1$ and $\psi \in \C_2$, the following $(2,3)$-cochains are $(2,3)$-cocycles.
\[\big(\phi \alpha - \alpha ~,~ \phi \beta - \beta\big) \quad\mbox{and}\quad \big(\alpha \circ (\psi^{-1},\psi^{-1}) - \alpha ~,~ \beta \circ (\psi^{-1},\psi^{-1},\psi^{-1}) - \beta\big)\] 
\end{Lem}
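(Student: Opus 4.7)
The strategy is to reduce each verification to the fact that $(\alpha,\beta)$ is already a $(2,3)$-cocycle (Lemma \ref{Lem_induced-cocycle}), by leveraging the compatibility conditions imposed by $\phi \in \mathcal{C}_1$ and $\psi \in \mathcal{C}_2$.

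First I unpack what membership in $\mathcal{C}_1$ and $\mathcal{C}_2$ means in terms of the induced representation $(\rho,D,\theta; V)$. Since $(\phi,1) \in \mathcal{C}$, Lemma \ref{Lem_compatibility condition} (and the definition of a compatible pair, applied to the trivial twist $\psi = 1$) tells us that $\phi$ commutes with every representation map:
\[\phi\rho(a) = \rho(a)\phi, \quad \phi D(a,b) = D(a,b)\phi, \quad \phi \theta(a,b) = \theta(a,b)\phi, \quad \forall\, a,b \in L.\]
Dually, for $\psi \in \mathcal{C}_2$, applying the compatibility condition to $(1,\psi) \in \mathcal{C}$ gives
\[\rho(a) = \rho(\psi(a)), \quad D(a,b) = D(\psi(a),\psi(b)), \quad \theta(a,b) = \theta(\psi(a),\psi(b)), \quad \forall\, a,b \in L.\]

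For the first pair $(\phi\alpha - \alpha,\, \phi\beta - \beta)$, I plug directly into the coboundary formulas \eqref{coboundary_map 1}--\eqref{coboundary_map 2} and \eqref{coboundary_map 5}--\eqref{coboundary_map 6}. Because $\phi$ commutes with $\rho$, $D$, and $\theta$, every term of $\delta_I(\phi\alpha)(a,b,c,d)$ takes the form $\phi$ applied to the corresponding term of $\delta_I\alpha(a,b,c,d)$; hence $\delta_I(\phi\alpha) = \phi \circ \delta_I\alpha = 0$. Exactly analogous calculations give $\delta_{II}(\phi\beta) = \phi \circ \delta_{II}\beta = 0$, $\delta_I^\star(\phi\alpha) = \phi \circ \delta_I^\star\alpha = 0$, and $\delta_{II}^\star(\phi\beta) = \phi \circ \delta_{II}^\star\beta = 0$. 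Subtracting the vanishing quantities $\delta_I\alpha,\, \delta_{II}\beta,\, \delta_I^\star\alpha,\, \delta_{II}^\star\beta$ yields the cocycle condition for $(\phi\alpha - \alpha,\, \phi\beta - \beta)$.

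For the second pair $(\alpha \circ (\psi^{-1},\psi^{-1}) - \alpha,\, \beta \circ (\psi^{-1},\psi^{-1},\psi^{-1}) - \beta)$, I use the $\psi$-invariance of the representation maps together with the fact that $\psi$ is a Lie-Yamaguti algebra automorphism, so $\psi^{-1}[c,d] = [\psi^{-1}c, \psi^{-1}d]$ and $\psi^{-1}\{a,b,c\} = \{\psi^{-1}a, \psi^{-1}b, \psi^{-1}c\}$. Substituting into \eqref{coboundary_map 1}, each factor $\rho(c),\, D(a,b)$ can be rewritten as $\rho(\psi^{-1}c),\, D(\psi^{-1}a, \psi^{-1}b)$, and every argument of $\alpha$ or $\beta$ gets pulled back through $\psi^{-1}$. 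This produces precisely $(\delta_I\alpha)(\psi^{-1}a,\psi^{-1}b,\psi^{-1}c,\psi^{-1}d)$, which vanishes. The same change-of-variable argument handles $\delta_{II}$, $\delta_I^\star$, and $\delta_{II}^\star$, confirming that this cochain is also a cocycle.

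The computations are routine once the two compatibility identities are in hand; no step is conceptually difficult, and the only thing to be careful about is tracking which variables get acted on by $\psi^{-1}$ and ensuring the Lie-Yamaguti bracket identities are used to commute $\psi^{-1}$ through $[\cdot,\cdot]$ and $\{\cdot,\cdot,\cdot\}$.
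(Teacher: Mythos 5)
Your proposal is correct and follows essentially the same route as the paper: use the compatibility of $(\phi,1)$ (respectively $(1,\psi)$) to pull $\phi$ through $\rho$, $D$, $\theta$ (respectively to replace $\rho(a),D(a,b),\theta(a,b)$ by their $\psi^{-1}$-pullbacks and commute $\psi^{-1}$ through the brackets), thereby reducing each coboundary of the new cochain to $\phi\circ\delta(\alpha,\beta)$ or $\delta(\alpha,\beta)\circ(\psi^{-1},\ldots)$, which vanish by Lemma \ref{Lem_induced-cocycle}. The only difference is cosmetic: the paper writes out the four identities for the $\phi$-case and cites ``a similar calculation'' for the $\psi$-case, which you instead sketch explicitly via the change-of-variables argument.
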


\begin{proof}

Let us recall from lemma (\ref{Lem_induced-cocycle}) that $(\alpha, \beta)$ is a $(2,3)$-cocycle. To show that $\big(\phi \alpha - \alpha ~,~ \phi \beta - \beta\big)$ is a $(2,3)$-cocycle, we only need to show that for $\phi \in \C_1$, $(\phi \alpha, \phi \beta)$ is a $(2,3)$-cocycle. Lemma \ref{Lem_induced-cocycle} and Lemma \ref{Lem_compatibility condition} imply that 
\begin{align*}
		\delta_I \phi \alpha(a_1,&a_2,a_3,a_4) \\
		&=
		- \rho(a_3)\phi \beta(a_1,a_2,a_4) 
		+ \rho(a_4)\phi \beta(a_1,a_2,a_3) 
		+ \phi \beta(a_1,a_2,[a_3,a_4]) \\
		&\qquad
		+ D(a_1,a_2) \phi \alpha(a_3,a_4)
		- \phi \alpha(\{a_1,a_2,a_3\},a_4) 
		- \phi \alpha(a_3,\{a_1,a_2,a_4\}) \\
		&= \phi \delta_I\alpha = 0
\end{align*}
\begin{align*} 
		\delta_{II}\phi \beta(a_1,&a_2,a_3,a_4,a_5) \\
		&=
		- \theta(a_4,a_5)\phi \beta(a_1,a_2,a_3) 
		+ \theta(a_3,a_5)\phi \beta(a_1,a_2,a_4)
		+ D(a_1,a_2)\phi \beta(a_3,a_4,a_5) \\
		&\qquad
		- D(a_3,a_4)\phi \beta(a_1,a_2,a_5) 
		- \phi \beta(\{a_1,a_2,a_3\},a_4,a_5) 
		- \phi \beta(a_3,\{a_1,a_2,a_4\},a_5) \\
		&\qquad
		- \phi \beta(a_3,a_4,\{a_1,a_2,a_5\})  
		+ \phi \beta(a_1,a_2,\{a_3,a_4,a_5\}) \\
		&= \phi \delta_{II} \beta = 0 
		\\ ~ \\
		\delta_I^*\phi \alpha(a_1,&a_2,a_3)\\
		&= 
		- \sum_{\circlearrowleft(a_1,a_2,a_3)}\rho(a_1)\phi \alpha(a_2,a_3) 
		~+ \sum_{\circlearrowleft(a_1,a_2,a_3)}\phi\alpha([a_1,a_2],a_3)
		~+ \sum_{\circlearrowleft(a_1,a_2,a_3)} \phi\beta(a_1,a_2,a_3) \\
		&=
		\phi \delta_I^* \alpha = 0 
		\\ ~ \\
		\delta_{II}^* \phi \beta(a_1,&a_2,a_3,a_4) \\
		&= 
		\theta(a_1,a_4)\phi \alpha(a_2,a_3) + \theta(a_2,a_4)\phi \alpha(a_3,a_4) + \theta(a_3,a_4) \phi \alpha(a_1,a_2) \\
		&\qquad
		+ \beta([a_1,a_2],a_3,a_4) + \beta([a_2,a_3],a_1,a_4) + \beta([a_3,a_1],a_2,a_4) \\	
		&= \phi \delta_{II}^* \beta(a_1,a_2,a_3,a_4) = 0 
	\end{align*}
	Thus, the pair  $(\phi \alpha, \phi \beta)$ is a $(2,3)$-cocycle. A similar calculation implies that the pair $\big(\alpha \circ (\psi^{-1},\psi^{-1}) - \alpha ~,~ \beta \circ (\psi^{-1},\psi^{-1},\psi^{-1}) - \beta\big)$ is a $(2,3)$ cocycle.
\end{proof}

\begin{Lem}
	The maps $\lambda_1$ and $\lambda_2$ do not depend on the choice of $s$. 
\end{Lem}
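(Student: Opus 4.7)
The plan is to exploit Lemma \ref{Lem_ind of sec}, which tells us that if $s$ and $t$ are two sections of $p$ and $(\alpha_s,\beta_s)$, $(\alpha_t,\beta_t)$ are the corresponding induced cocycles, then $(\alpha_s,\beta_s)-(\alpha_t,\beta_t)=\delta(\lambda)$, where $\lambda\colon L\to V$ is defined by $\lambda(a):=s(a)-t(a)$. Equivalently, $\alpha_s-\alpha_t=\delta_I(\lambda)$ and $\beta_s-\beta_t=\delta_{II}(\lambda)$. The goal is then to show that the differences of the representing cochains used to define $\lambda_1(\phi)$ and $\lambda_2(\psi)$ with respect to $s$ and with respect to $t$ are themselves coboundaries.

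For $\lambda_1$, I would first compute
\[
\bigl(\phi\alpha_s-\alpha_s,\ \phi\beta_s-\beta_s\bigr)-\bigl(\phi\alpha_t-\alpha_t,\ \phi\beta_t-\beta_t\bigr)
=\bigl((\phi-\mathrm{id})\delta_I(\lambda),\ (\phi-\mathrm{id})\delta_{II}(\lambda)\bigr).
\]
Since $\phi\in\mathcal{C}_1$ means $(\phi,1)\in\mathcal{C}$, Lemma \ref{Lem_compatibility condition} gives $\phi\rho(a)=\rho(a)\phi$, $\phi D(a,b)=D(a,b)\phi$, and $\phi\theta(a,b)=\theta(a,b)\phi$. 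Plugging these into formulas \eqref{coboundary_map 3} and \eqref{coboundary_map 4}, a direct term-by-term calculation yields $\phi\,\delta_I(\lambda)=\delta_I(\phi\lambda)$ and $\phi\,\delta_{II}(\lambda)=\delta_{II}(\phi\lambda)$. Hence
\[
(\phi-\mathrm{id})\bigl(\delta_I(\lambda),\delta_{II}(\lambda)\bigr)
=\delta\bigl(\phi\lambda-\lambda\bigr)\in B^{(2,3)}(L,V),
\]
so the two representatives define the same class in $\mathrm{H}^{(2,3)}(L,V)$.

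For $\lambda_2$, I would perform the analogous subtraction, obtaining the difference
\[
\bigl(\delta_I(\lambda)\circ(\psi^{-1},\psi^{-1})-\delta_I(\lambda),\ \delta_{II}(\lambda)\circ(\psi^{-1},\psi^{-1},\psi^{-1})-\delta_{II}(\lambda)\bigr).
\]
Here the compatibility condition for $(1,\psi)\in\mathcal{C}$ translates, via Lemma \ref{Lem_compatibility condition}, into the identities $\rho(a)=\rho(\psi(a))$, $D(a,b)=D(\psi(a),\psi(b))$, and $\theta(a,b)=\theta(\psi(a),\psi(b))$. Substituting $\psi^{-1}(a),\psi^{-1}(b),\ldots$ into the coboundary formulas and using these invariance relations together with the fact that $\psi$ is a Lie-Yamaguti morphism (so $[\psi^{-1}a,\psi^{-1}b]=\psi^{-1}[a,b]$ and similarly for the ternary bracket), one obtains
\[
\delta_I(\lambda)\circ(\psi^{-1},\psi^{-1})=\delta_I(\lambda\circ\psi^{-1}),\qquad
\delta_{II}(\lambda)\circ(\psi^{-1},\psi^{-1},\psi^{-1})=\delta_{II}(\lambda\circ\psi^{-1}).
\]
Therefore the difference equals $\delta(\lambda\circ\psi^{-1}-\lambda)\in B^{(2,3)}(L,V)$, so again the two cohomology classes coincide.

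The only step requiring real care is the bookkeeping in verifying $\phi\,\delta(\lambda)=\delta(\phi\lambda)$ and $\delta(\lambda)\circ(\psi^{-1},\ldots)=\delta(\lambda\circ\psi^{-1})$; these are essentially symbol-pushing arguments on the explicit formulas \eqref{coboundary_map 3}--\eqref{coboundary_map 4}, made possible precisely by the two compatibility conditions. Everything else is just the identity $(\alpha_s,\beta_s)-(\alpha_t,\beta_t)=\delta(\lambda)$ from Lemma \ref{Lem_ind of sec}.
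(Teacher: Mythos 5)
Your proposal is correct and follows essentially the same route as the paper: both use Lemma \ref{Lem_ind of sec} to write $(\alpha_s,\beta_s)-(\alpha_t,\beta_t)=\delta(\lambda)$ with $\lambda=s-t$, and then the compatibility conditions of Lemma \ref{Lem_compatibility condition} to show $\phi\,\delta(\lambda)=\delta(\phi\lambda)$ (for $\lambda_1$) and $\delta(\lambda)\circ(\psi^{-1},\ldots)=\delta(\lambda\circ\psi^{-1})$ (for $\lambda_2$), so the differences of representatives lie in $B^{(2,3)}(L,V)$. Your presentation merely packages the two coboundary contributions into a single $\delta(\phi\lambda-\lambda)$, respectively $\delta(\lambda\psi^{-1}-\lambda)$, where the paper treats them separately.
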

\begin{proof}
	The maps $\lambda_1$ and $\lambda_2$ are defined by
	\[
		\lambda_1(\phi) := \big[\phi \alpha - \alpha ~,~ \phi \beta - \beta\big] 
		\quad \mbox{and}\quad
		\lambda_2(\psi) := \big[\alpha \circ (\psi^{-1},\psi^{-1}) - \alpha ~,~ \beta \circ (\psi^{-1},\psi^{-1},\psi^{-1}) - \beta\big]. 
	\]  
	Let $s,t$ be two sections of $p$ and their corresponding induced cocycles be $(\alpha_s,\beta_s)$ and $(\alpha_t,\beta_t)$. We want to show that 
	\begin{equation} \label{equ_15}
		\big[\phi \alpha_s - \alpha_s ~,~ \phi \beta_s - \beta_s\big] 
		= 
		\big[\phi \alpha_t - \alpha_t ~,~ \phi \beta_t - \beta_t\big],
	\end{equation}
	\begin{multline} \label{equ_16}
		\big[\alpha_s \circ (\psi^{-1},\psi^{-1}) - \alpha_s ~,~ \beta_s \circ (\psi^{-1},\psi^{-1},\psi^{-1}) - \beta_s\big]
		\\
		=
		\big[\alpha_t \circ (\psi^{-1},\psi^{-1}) - \alpha_t ~,~ \beta_t \circ (\psi^{-1},\psi^{-1},\psi^{-1}) - \beta_t\big].
	\end{multline}
	From Lemma \ref{Lem_ind of sec}, it follows that 
	\begin{equation} \label{equ_17}
		[\alpha_s,\beta_s] = [\alpha_t,\beta_t].
	\end{equation} 
	Once we show that 
	\[
		\big[\alpha_s \circ (\psi^{-1},\psi^{-1})~,~\beta_s \circ (\psi^{-1},\psi^{-1},\psi^{-1})\big]
		=
		\big[\alpha_t \circ (\psi^{-1},\psi^{-1})~,~\beta_t \circ (\psi^{-1},\psi^{-1},\psi^{-1})\big],
	\]
	the equation (\ref{equ_16}) holds. For all $a,b,c \in L$,  
	\begin{align*}
		\beta_s \circ &(\psi^{-1},\psi^{-1},\psi^{-1}) (a,b,c)
		- 
		\beta_t \circ (\psi^{-1},\psi^{-1},\psi^{-1}) (a,b,c) \\
		&= 
		\theta(\psi^{-1}b,\psi^{-1}c)(\lambda \psi^{-1}a) 
		- \theta(\psi^{-1}a,\psi^{-1}c)(\lambda \psi^{-1}b)
		+ D(\psi^{-1}a,\psi^{-1}b)(\lambda \psi^{-1}c)
		- \lambda\psi^{-1}(a,b,c) \\
		&=
		\theta(b,c)(\lambda \psi^{-1}a) 
		- \theta(a,c)(\lambda \psi^{-1}b)
		+ D(a,b)(\lambda \psi^{-1}c)
		- \lambda\psi^{-1}(a,b,c) \quad\quad\quad (\text{by Lemma} ~\ref{Lem_compatibility condition})\\
		&= 
		~~\delta_{II}(\lambda \psi^{-1})(a,b,c).
	\end{align*}
	Similarly, $$\alpha_s \circ (\psi^{-1},\psi^{-1})(a,b) - \alpha_t \circ (\psi^{-1},\psi^{-1})(a,b) = \delta_I(\lambda \psi^{-1})(a,b).$$ 
 Thus, 
	\[
		\big[\alpha_s \circ (\psi^{-1},\psi^{-1})~,~\beta_s \circ (\psi^{-1},\psi^{-1},\psi^{-1})\big]
		=
		\big[\alpha_t \circ (\psi^{-1},\psi^{-1})~,~\beta_t \circ (\psi^{-1},\psi^{-1},\psi^{-1})\big].
	\]
	Now, by using equation (\ref{equ_17}) the equation (\ref{equ_16}) follows. We obtain (\ref{equ_15}) by a similar calculation.  
\end{proof}

\medskip

The next result shows that $(A)$ and $(B)$ are exact sequences, which would answer Questions I and II in the following way

\begin{itemize}
\item a pair $(\phi,1) \in \C$ is inducible if and only if $\phi \in \mathrm{Ker}(\lambda_1)$,
\smallskip
\item a pair $(1,\psi) \in \C$ is inducible if and only if $\psi \in \mathrm{Ker}(\lambda_2)$.
\end{itemize}

\begin{Thm} \label{Thm_two_exact_seq}
Let $0 \to V \xrightarrow[]{i} \L \xrightarrow[]{p} L \to 0$ be an abelian extension of $L$ by $V$. The following sequences are exact sequences 
\[\begin{tikzcd}
0 & {\mathrm{Aut}^{V,L}(\L)} & {\mathrm{Aut}_V^L(\L)} & {\mathcal{C}_1} & {\mathrm{H}^{(2,3)}(L,V)}, \\
0 & {\mathrm{Aut}^{V,L}(\L)} & {\mathrm{Aut}^V(\L)} & {\mathcal{C}_2} & {\mathrm{H}^{(2,3)}(L,V)}.
\arrow[from=1-1, to=1-2]
\arrow["i_1", from=1-2, to=1-3]
\arrow["{\tau_1}", from=1-3, to=1-4]
\arrow["{\lambda_1}", from=1-4, to=1-5]
\arrow[from=2-1, to=2-2]
\arrow["i_2", from=2-2, to=2-3]
\arrow["{\tau_2}", from=2-3, to=2-4]
\arrow["{\lambda_2}", from=2-4, to=2-5]
\end{tikzcd}\]
All the maps are group homomorphisms except $\lambda_1$ and $\lambda_2$.
\end{Thm}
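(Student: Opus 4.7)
The plan is to establish both exact sequences $(A)$ and $(B)$ in parallel, since they are essentially mirror images: sequence $(A)$ concerns automorphisms inducing the identity on $L$, while $(B)$ concerns automorphisms inducing the identity on $V$. The overall strategy closely parallels the proof of Theorem~\ref{Thm_Wells_Sq}, restricted to the appropriate subgroups and specialized via Theorem~\ref{Thm_cmptbl_indcbl}.

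First I would verify that $i_1, i_2, \tau_1, \tau_2$ are group homomorphisms. The inclusions $i_1, i_2$ are tautologically so, and injective as set-theoretic inclusions of subgroups. For $\tau_1, \tau_2$ a one-line computation gives $\tau_1(\gamma_1 \gamma_2) = (\gamma_1 \gamma_2)|_V = \gamma_1|_V \circ \gamma_2|_V$ for $\gamma_1, \gamma_2 \in \mathrm{Aut}_V^L(\L)$, and $\tau_2(\gamma_1 \gamma_2) = \overline{\gamma_1 \gamma_2} = \bar\gamma_1 \circ \bar\gamma_2$ for $\gamma_1, \gamma_2 \in \mathrm{Aut}^V(\L)$; the latter is well-defined precisely because $\tau$ was shown to be a group homomorphism in Section~\ref{sec-4}.

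Exactness at $\mathrm{Aut}_V^L(\L)$ and $\mathrm{Aut}^V(\L)$ is immediate from unwinding definitions. Indeed, $\gamma \in \ker(\tau_1)$ means $\gamma|_V = 1_V$ in addition to $\bar\gamma = 1_L$, which is precisely the definition of $\mathrm{Aut}^{V,L}(\L)$; similarly, $\gamma \in \ker(\tau_2)$ forces $\gamma|_V = 1_V$ and $\bar\gamma = 1_L$.

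The main content is exactness at $\mathcal{C}_1$ and $\mathcal{C}_2$, and for this I would invoke Theorem~\ref{Thm_cmptbl_indcbl}. Given $\phi \in \mathcal{C}_1$, the element $\phi$ lies in $\mathrm{Im}(\tau_1)$ iff the pair $(\phi, 1_L)$ is inducible by some $\gamma \in \mathrm{Aut}_V(\L)$; but then the condition $\bar\gamma = 1_L$ automatically places $\gamma$ in the smaller subgroup $\mathrm{Aut}_V^L(\L)$. Specializing Theorem~\ref{Thm_cmptbl_indcbl} at $\psi = 1_L$ reduces the inducibility criterion to the assertion that $(\alpha, \beta)$ and $(\phi\alpha, \phi\beta)$ are cohomologous, i.e.\ that $[(\phi\alpha - \alpha,~ \phi\beta - \beta)] = 0$ in $\mathrm{H}^{(2,3)}(L, V)$, which is precisely $\lambda_1(\phi) = 0$. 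An entirely analogous argument with $\phi = 1_V$ yields exactness at $\mathcal{C}_2$: Theorem~\ref{Thm_cmptbl_indcbl} now tests inducibility of $(1_V, \psi)$ against the cocycle $(\alpha \circ (\psi^{-1},\psi^{-1}) - \alpha,~ \beta \circ (\psi^{-1},\psi^{-1},\psi^{-1}) - \beta)$, whose vanishing in cohomology is the definition of $\lambda_2(\psi) = 0$. The step that needs the most care is ensuring that the lifted automorphism $\gamma$ produced by the sufficiency direction of Theorem~\ref{Thm_cmptbl_indcbl} lands in the \emph{correct} smaller subgroup and not merely in $\mathrm{Aut}_V(\L)$; reading off the explicit formula $\gamma(v + s(a)) = \phi(v) + \lambda\psi(a) + s\psi(a)$ from that proof with $\psi = 1_L$ (respectively $\phi = 1_V$) shows at once that $\bar\gamma = 1_L$ (respectively $\gamma|_V = 1_V$), completing the argument.
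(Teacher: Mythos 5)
Your proposal is correct and follows essentially the same route as the paper: exactness at the first two terms by unwinding the definitions of $i_1,i_2,\tau_1,\tau_2$, and exactness at $\mathcal{C}_1,\mathcal{C}_2$ by specializing Theorem~\ref{Thm_cmptbl_indcbl} at $\psi=1_L$ (respectively $\phi=1_V$). Your extra check that the lift lands in $\mathrm{Aut}_V^L(\L)$ (resp.\ $\mathrm{Aut}^V(\L)$) is in fact automatic from $\tau(\gamma)=(\phi,1)$ (resp.\ $(1,\psi)$), which is what the paper dismisses with ``clearly,'' and your appeal to the explicit formula for $\gamma$ just makes that point more explicit.
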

\begin{proof}
The map $i_1$ and $i_2$ are inclusion maps, so exactness at the first term of $(A)$ and $(B)$ is trivial. From the definition of $i_1, i_2, \tau_1$, and $\tau_2$ we have 
\[Im(i_1) = Ker(\tau_1)\quad \text{and} \quad Im(i_2) = Ker(\tau_2)\]
giving us exactness at the second term of $(A)$ and $(B)$.

\medskip

\noindent
\textit{Exactness at the third term of $(A)$:} We want to show that $Im(\tau_1) = Ker(\lambda_1)$. Let $\phi \in Im(\tau_1) \subseteq \C_1$ such that $\gamma \in \mathrm{Aut}^L_V(\L)$ with $\tau_1(\gamma) = \phi$, then $(\phi, 1)$ is a inducible compatible pair. Theorem \ref{Thm_cmptbl_indcbl} implies that $(\phi \alpha - \alpha, \phi \beta - \beta) \in B^2(L,V) \times B^3(L,V)$ since $\psi =1$. Thus, $\phi \in Ker(\lambda_1)$ and we have $Im(\tau_1) \subseteq Ker(\lambda_1)$. 

\smallskip
	
Conversely, if $\phi \in Ker(\lambda_1) \subseteq \C_1$, then $(\phi,1)$ is a compatible pair with $\lambda_1(\phi) = 0$. So, the $(2,3)$-cocycles $(\phi \alpha, \phi \beta)$ and $(\alpha,\beta)$ are cohomologous. Theorem \ref{Thm_cmptbl_indcbl} implies that $(\phi,1)$ is inducible, i.e., there exists a $\gamma \in \mathrm{Aut}_V(\L)$ such that $\tau(\gamma) = (\phi,1)$.  Clearly, $\gamma \in \mathrm{Aut}_V^L(\L)$ with $\tau_1(\gamma) = \phi$. So, we get $\phi \in Im(\tau_1)$, which implies that 
$Im(\tau_1) = Ker(\lambda_1).$    

\medskip

\noindent
\textit{Exactness at the third term of $\mathrm{(B)}$:} We want to show that $Im(\tau_2) = Ker(\lambda_2)$. If $\psi \in Im(\tau_2) \subseteq \C_2$, then we have $\tau_2(\gamma) = \psi$ for some $\gamma \in \mathrm{Aut}^V(\L)$. So, the pair $(1,\psi)$ is inducible. Theorem \ref{Thm_cmptbl_indcbl} implies that $\big(\alpha(\psi^{-1},\psi^{-1}), \beta(\psi^{-1},\psi^{-1},\psi^{-1})\big)$ and $(\alpha,\beta)$ are cohomologous since $\phi = 1$.  Thus, $\psi \in Ker(\lambda_2)$ and it follows that $Im(\tau_2) \subseteq Ker(\lambda_2)$.
	
\smallskip

Conversely, let $\psi \in Ker(\lambda_2) \subseteq \C_2$. Then $(1,\psi)$ is a compatible pair with $\lambda_2(\psi) = 0$. So, the $(2,3)$-cocycles  $\big(\alpha(\psi^{-1},\psi^{-1}), \beta(\psi^{-1},\psi^{-1},\psi^{-1})\big)$ and $(\alpha,\beta)$ are cohomologous. Again, Theorem \ref{Thm_cmptbl_indcbl} implies that the pair $(1,\psi)$ is inducible. Let  $\tau(\gamma) = (1,\psi)$ for some $\gamma \in \mathrm{Aut}_V(\L)$. Clearly, $\gamma \in \mathrm{Aut}^V(\L)$ with $\tau_2(\gamma) = \psi$, which implies that $\psi \in Im(\tau_2)$. Therefore, $	Im(\tau_2) = Ker(\lambda_2)$. 
\end{proof}

\subsection{Applications to semi-direct product Lie-Yamaguti algebras} 

We use the exact sequences $(A)$ and $(B)$ from Theorem \ref{Thm_two_exact_seq} to describe certain automorphism groups of semi-direct product Lie-Yamaguti algebras. Let $L$ be a Lie-Yamaguti algebra and $(\rho,D,\theta;V)$ be a representation of $L$. Consider the semi-direct product Lie-Yamaugit algebra $L \ltimes V$. Then $V$ is an abelian ideal in $L \ltimes V$,while $L$ is a subalgebra of $L \ltimes V$. Hence, we obtain the following abelian extension of the Lie-Yamaguti algebra $L$
\[\begin{tikzcd}
	0 & V & {L \ltimes V} & L & 0,
	\arrow[from=1-1, to=1-2]
	\arrow["i", from=1-2, to=1-3]
	\arrow["p", from=1-3, to=1-4]
	\arrow["0", from=1-4, to=1-5]
\end{tikzcd}\]
where $i(a)=(0,a)$ and $p(a,v)=a$. The above short exact sequence splits in the category of Lie-Yamaguti algebra with a section $s:L \to L \ltimes V$ defined as $s(a):=(a,0)$. Let us denote
\smallskip
\[\mathrm{Aut}_{\mathrm{rep}}(V):=\text{The group of all automorphism of the representation } (\rho,D,\theta;V).\] 
and
\[\mathrm{Aut}^{\mathrm{inv}}(L):=\{\psi \in \mathrm{Aut}(L)~| ~\text{the representation $(\rho,D,\theta;V)$ is invariant under $\psi$}\}.\]

\noindent In other words, $\mathrm{Aut}^{\mathrm{inv}}(L)$ is the set of those automorphisms of $L$ for which the twisted representation on $V$ is the same as the original one (for the definition of twisted representation, we refer to section \ref{sec-4}). It easily follows that $$\mathrm{Aut}_{\mathrm{rep}}(V) = \C_1 \quad \text{and}\quad\mathrm{Aut}^{\mathrm{inv}}(L)=\C_2.$$
With these notations, we rewrite the sequences $(A)$ and $(B)$ as follows
\[\begin{tikzcd}
	0 & {\mathrm{Aut}^{V,L}(\L)} & {\mathrm{Aut}_V^L(\L)} & {\mathrm{Aut}_{\mathrm{rep}}(V)} & {\mathrm{H}^{(2,3)}(L,V)}, \\
	0 & {\mathrm{Aut}^{V,L}(\L)} & {\mathrm{Aut}^V(\L)} & {\mathrm{Aut}^{\mathrm{inv}}(L)} & {\mathrm{H}^{(2,3)}(L,V)}.
	\arrow[from=1-1, to=1-2]
	\arrow["i_1", from=1-2, to=1-3]
	\arrow["{\tau_1}", from=1-3, to=1-4]
	\arrow["{\lambda_1}", from=1-4, to=1-5]
	\arrow[from=2-1, to=2-2]
	\arrow["i_2", from=2-2, to=2-3]
	\arrow["{\tau_2}", from=2-3, to=2-4]
	\arrow["{\lambda_2}", from=2-4, to=2-5]
\end{tikzcd}\]
In the following theorem, we simplify certain automorphism groups of $L \ltimes V$ in terms of semi-direct product of groups. 

\begin{Thm}
Let $L$ be a Lie-Yamaguti algebra and $(\rho,D,\theta;V)$ be a representation of $L$. Then we have the following isomorphism 
\[\mathrm{Aut}^L_V(L \ltimes V) \cong \mathrm{Aut}_{\mathrm{rep}}(V) \ltimes \mathrm{Aut}^{V,L}(L \ltimes V), \quad\mathrm{Aut}^V(L\ltimes V) \cong \mathrm{Aut}^{\mathrm{inv}}(L) \ltimes \mathrm{Aut}^{V,L}(L \ltimes V).\]
\end{Thm}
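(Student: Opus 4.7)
The plan is to recognize that the semi-direct product $L \ltimes V$ is a split extension of $L$ by $V$ (with the canonical section $s(a) = (a,0)$, which is a Lie-Yamaguti algebra morphism), so the induced cocycle $(\alpha, \beta)$ is identically zero. This immediately forces $\lambda_1 \equiv 0$ and $\lambda_2 \equiv 0$, and moreover Corollary \ref{Comp_iff_Ind} guarantees that every compatible pair is inducible. Hence the exact sequences $(A)$ and $(B)$ of Theorem \ref{Thm_two_exact_seq} reduce to short exact sequences of groups
\[
1 \to \mathrm{Aut}^{V,L}(L \ltimes V) \xrightarrow{i_1} \mathrm{Aut}^L_V(L \ltimes V) \xrightarrow{\tau_1} \mathrm{Aut}_{\mathrm{rep}}(V) \to 1,
\]
\[
1 \to \mathrm{Aut}^{V,L}(L \ltimes V) \xrightarrow{i_2} \mathrm{Aut}^V(L \ltimes V) \xrightarrow{\tau_2} \mathrm{Aut}^{\mathrm{inv}}(L) \to 1.
\]

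To upgrade these to semi-direct product decompositions, the next step is to produce explicit group-theoretic sections. For the first sequence, I would define $\sigma_1 : \mathrm{Aut}_{\mathrm{rep}}(V) \to \mathrm{Aut}^L_V(L \ltimes V)$ by $\sigma_1(\phi)(a,v) := (a, \phi(v))$; for the second, $\sigma_2 : \mathrm{Aut}^{\mathrm{inv}}(L) \to \mathrm{Aut}^V(L \ltimes V)$ by $\sigma_2(\psi)(a,v) := (\psi(a), v)$. A direct verification using the formulas for the semi-direct product brackets shows that $\sigma_1(\phi)$ preserves $[\cdot,\cdot]_{\ltimes}$ and $\{\cdot,\cdot,\cdot\}_{\ltimes}$ precisely because $\phi$ intertwines $\rho, D, \theta$ with themselves (i.e. $\phi \in \mathrm{Aut}_{\mathrm{rep}}(V)$), and similarly $\sigma_2(\psi)$ preserves the brackets because $\psi$ is an automorphism of $L$ under which the representation is invariant.

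Once $\sigma_1$ and $\sigma_2$ are shown to be well-defined Lie-Yamaguti algebra automorphisms, it is immediate that they are group homomorphisms and satisfy $\tau_1 \circ \sigma_1 = \mathrm{id}_{\mathrm{Aut}_{\mathrm{rep}}(V)}$ and $\tau_2 \circ \sigma_2 = \mathrm{id}_{\mathrm{Aut}^{\mathrm{inv}}(L)}$. By the splitting lemma for groups, the two short exact sequences split, yielding
\[
\mathrm{Aut}^L_V(L \ltimes V) \cong \mathrm{Aut}_{\mathrm{rep}}(V) \ltimes \mathrm{Aut}^{V,L}(L \ltimes V),
\]
\[
\mathrm{Aut}^V(L \ltimes V) \cong \mathrm{Aut}^{\mathrm{inv}}(L) \ltimes \mathrm{Aut}^{V,L}(L \ltimes V).
\]

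The only non-mechanical step is the verification that $\sigma_1$ and $\sigma_2$ land in the claimed subgroups of $\mathrm{Aut}(L \ltimes V)$, which is where the representation-theoretic conditions defining $\mathrm{Aut}_{\mathrm{rep}}(V)$ and $\mathrm{Aut}^{\mathrm{inv}}(L)$ are used; this is essentially forced by the defining identities of a representation, but one must carefully unravel the twisted representation for $\sigma_2$ to see that invariance of $(\rho, D, \theta)$ under $\psi$ is exactly what is needed. After that the semi-direct product structure is a formal consequence of the splitting lemma, so no further obstacles arise.
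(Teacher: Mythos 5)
Your proposal is correct and follows essentially the same route as the paper: use the splitting of the semi-direct product extension to make the induced cocycle (and hence $\lambda_1$, $\lambda_2$) vanish, turning the sequences $(A)$ and $(B)$ into short exact sequences of groups, and then split them with the sections $\phi \mapsto \big((a,v)\mapsto(a,\phi(v))\big)$ and $\psi \mapsto \big((a,v)\mapsto(\psi(a),v)\big)$. These are exactly the maps $\eta$ and $\eta'$ in the paper's proof, and the verification you outline (that they are Lie-Yamaguti automorphisms precisely because $\phi\in\mathrm{Aut}_{\mathrm{rep}}(V)$ and the representation is $\psi$-invariant) is the same computation carried out there.
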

\begin{proof}
Recall the following short exact sequence splits in the category of Lie-Yamaguti algebras
\[\begin{tikzcd}
	0 & V & {L \ltimes V} & L & 0.
	\arrow[from=1-1, to=1-2]
	\arrow["i", from=1-2, to=1-3]
	\arrow["p", from=1-3, to=1-4]
	\arrow["0", from=1-4, to=1-5]
\end{tikzcd}\]
Thus, the section $s$ is a Lie-Yamaguti algebra morphism. By equations \eqref{ind_cocycle-I}-\eqref{ind_cocycle-II}, it follows that the maps $\lambda_1:\mathrm{Aut}_L(V) \to \mathrm{H}^{(2,3)}(L,V)$ and $\lambda_2:\mathrm{Aut}_V(L) \to \mathrm{H}^{(2,3)}(L,V)$ in $(A)$ and $(B)$ are trivial. Thus, we have
\[\begin{tikzcd}
	& 0 & {\mathrm{Aut}^{V,L}(L\ltimes V)} & {\mathrm{Aut}_V^L(L\ltimes V)} & {\mathrm{Aut}_{\mathrm{rep}}(V)} & 0, & (I) \\
	& 0 & {\mathrm{Aut}^{V,L}(L\ltimes V)} & {\mathrm{Aut}^V(L\ltimes V)} & {\mathrm{Aut}^{\mathrm{inv}}(L)} & 0. & (II)
	\arrow[from=1-2, to=1-3]
	\arrow["i_1", from=1-3, to=1-4]
	\arrow["{\tau_1}", from=1-4, to=1-5]
	\arrow[from=1-5, to=1-6]
	\arrow[from=2-2, to=2-3]
	\arrow["i_2", from=2-3, to=2-4]
	\arrow["{\tau_2}", from=2-4, to=2-5]
	\arrow[from=2-5, to=2-6]
\end{tikzcd}\]
Next, we show that the above short exact sequences split in the category of groups.

\smallskip

\noindent{\it $(I)$ is split exact:} Let $\eta:\mathrm{Aut}_{\mathrm{rep}}(V) \to \mathrm{Aut}_V^L(L \ltimes V)$ defined as $\eta(\phi):=\gamma$, where $\gamma$ is given by $\gamma(a,v):=(a,\phi(v))$ for all $a \in L$ and $v \in V$. At first we verify $\gamma \in \mathrm{Aut}_V^L(L \ltimes V)$. It is clear that $\gamma(V) = V$ and $\gamma$ induces identity map on $L$. To prove $\gamma$ is a Lie-Yamaguti algebra homomorphism, let $(a,u),(b,v),(c,w) \in L \ltimes V$. Then 
\begin{align*}
	\gamma[(a,u),(b,v)]_{\ltimes} &~=~ \gamma([a,b],\rho(a)(v) - \rho(b)(u)) \\
	&~=~ ([a,b],\phi\rho(a)(v) - \phi\rho(b)(u)) \\
	&~=~ ([a,b],\rho(a)\phi(v) - \rho(b) \phi(u)) \\
	&~=~ [\gamma(a,u),\gamma(b,v)]_{\ltimes},  \\ 
	\gamma\{(a,u),(b,v),(c,w)\}_{\ltimes} &~=~ \gamma(\{a,b,c\}, D(a,b)(w) + \theta(b,c)(u) - \theta(a,c)(v)) \\
	&~=~ (\{a,b,c\}, \phi D(a,b)(w) + \phi \theta(b,c)(u) - \phi\theta(a,c)(v)) \\
	&~=~ (\{a,b,c\}, D(a,b)(\phi(w)) + \theta(b,c)(\phi(u)) - \theta(a,c)(\phi(v))) \\
	&~=~ \{\gamma(a,u),\gamma(b,v),\gamma(c,w)\}_{\ltimes}.
\end{align*} 
So $\gamma$ is a Lie-Yamaguti algebra homomorphism. It is easy to check that $\gamma$ is a bijection. Thus $\gamma \in \mathrm{Aut}_V^L(L \ltimes V)$. Further $\tau_1 \circ \eta = id$, so $\eta$ is a section and $\eta$ is a group homomorphism. Therefore the exact sequence $(I)$ splits and the first isomorphism holds, that is 
\[
\mathrm{Aut}^L_V(L \ltimes V) \cong \mathrm{Aut}_{\mathrm{rep}}(V) \ltimes \mathrm{Aut}^{V,L}(L \ltimes V). 
\] 

\noindent{\it $(II)$ is split exact:} Let $\eta':\mathrm{Aut}^{\mathrm{inv}}(L) \to \mathrm{Aut}^V(L \ltimes V)$ defined by $\eta'(\psi) := \gamma$ where $\gamma$ is defined as $\gamma(a,v) := (\psi(a),v)$. It is clear that $\gamma$ fixes elements of $V$ pointwise and is a bijection. Next we prove that $\gamma$ is a homomorphism. Let $(a,u),(b,v),(c,w) \in L \ltimes V$
\begin{align*}
	\gamma[(a,u),(b,v)]_{\ltimes} &~=~ \gamma([a,b],\rho(a)(v) - \rho(b)(u)) \\
	&~=~ (\psi[a,b],\rho(a)(v) - \rho(b)(u)) \\
	&~=~ ([\psi(a),\psi(b)],\rho(\psi(a))\phi(v) - \rho(\psi(b)) \phi(u)) \\
	&~=~ [\gamma(a,u),\gamma(b,v)]_{\ltimes},  \\ 
	\gamma\{(a,u),(b,v),(c,w)\}_{\ltimes} &~=~ \gamma(\{a,b,c\}, D(a,b)(w) + \theta(b,c)(u) - \theta(a,c)(v)) \\
	&~=~ (\psi\{a,b,c\}, D(a,b)(w) + \theta(b,c)(u) - \theta(a,c)(v)) \\
	&~=~ (\{\psi(a),\psi(b),\psi(c)\}, D(\psi(a),\psi(b))(w) + \theta(\psi(b),\psi(c))(u) - \theta(\psi(a),\psi(c))(v)) \\
	&~=~ \{\gamma(a,u),\gamma(b,v),\gamma(c,w)\}_{\ltimes}.
\end{align*} 
Therefore $\gamma \in \mathrm{Aut}^V(L \ltimes V)$. Also $\tau_2 \circ \eta' = id$, so $\eta'$ is a section. It is also easy to verify that $\eta'$ is a group homomorphism. Hence $(II)$ splits in the category of groups, giving us the second isomorphism, that is 
\[\mathrm{Aut}^V(L\ltimes V) \cong \mathrm{Aut}^{\mathrm{inv}}(L) \ltimes \mathrm{Aut}^{V,L}(L \ltimes V).\]
Hence completing the proof of the theorem. 
\end{proof}

\bigskip

\section{\large The particular case of nilpotent Lie-Yamaguti algebras}\label{sec-6}

\medskip

In this section, we consider nilpotent Lie-Yamaguti algebras of index $2$ and discuss the inducibility problem for an abelian extension arising from these algebras. Finally, we give an algorithm to find all inducible pairs of automorphisms for an abelian extension arising from a nilpotent Lie-Yamaguti algebra of index 2. 

\smallskip 

We first recall the definition of nilpotent Lie-Yamaguti algebra from \cite{abdelwahab}. Let $L$  be a Lie-Yamaguti algebra. Denote
\begin{equation*}
L^{(0)}:=L \quad\mbox{and}\quad	L^{(n+1)} := \left[L^{(n)},L\right] + \left\{L^{(n)},L,L\right\} + \left\{L,L,L^{(n)}\right\}, \quad 
\mbox{for}~  n\geq 0.
\end{equation*}
Then, a Lie-Yamaguti algebra $L$ is \textbf{nilpotent} of \textbf{index $k$} if $k$ is the least positive integer such that $L^{(k)} = 0$.

\smallskip 

Let $L$ be a nilpotent Lie-Yamaguti algebra of index $2$. Note that the center $\Z(L)$ is non-trivial and the induced Lie-Yamaguti algebra structure on the quotient space $L/\Z(L)$ is abelian. We consider the following extension arising from $L$
\[\begin{tikzcd}
	0 & \Z(L) & L & L/\Z(L) & 0,
	\arrow[from=1-1, to=1-2]
	\arrow["i", from=1-2, to=1-3]
	\arrow["p", from=1-3, to=1-4]
	\arrow[from=1-4, to=1-5]
\end{tikzcd}\]
where $i$ is the inclusion map and $p$ is the projection map. We denote the quotient $ L/\Z(L)$ by $\widebar{L}$. The induced representation of $\widebar{L}$ on $\Z(L)$ are given by the representation maps $\rho:\widebar{L} \to End(\Z(L))$ and $D,\theta:\widebar{L} \times \widebar{L} \to End(\Z(L))$, which are defined as follows: for all $a,b \in \widebar{L}\mbox{ and } v \in \Z(L),$ 
\begin{align*}
	\rho(a)(v) &:= [s(a),v]_{L} = 0,  \\
	D(a,b)(v) &:= \{s(a), s(b),v\}_{L} = 0, \\
	\theta(a,b)(v) &:= \{v, s(a), s(b)\}_{L} = 0, \quad (\text{since}~ v \in \Z(L)).
\end{align*}
Thus, the induced representation of $\widebar{L}$ on $\Z(L)$ is trivial. As a consequence, every pair $(\phi,\psi) \in \mathrm{Aut}(\Z(L)) \times \mathrm{Aut}(\widebar{L})$ is a compatible pair. Now since $\widebar{L}$ is an abelian Lie-Yamaguti algebra, we have, 
\[B^{(2,3)}(L,V) = \{0\},\]  
due to which Theorem $\ref{Thm_cmptbl_indcbl}$ reduces to the following lemma.
  
\begin{Lem} \label{Lem_counter-exm}
Let $L$ be a nilpotent Lie Yamaguti algebra of index $2$ with center $\Z(L)$. Consider the abelian extension
\[\begin{tikzcd}
	0 & \Z(L) & L & \widebar{L} & 0.
	\arrow[from=1-1, to=1-2]
	\arrow["i", from=1-2, to=1-3]
	\arrow["p", from=1-3, to=1-4]
	\arrow[from=1-4, to=1-5]
\end{tikzcd}\] 
A pair $(\phi,\psi) \in \mathrm{Aut}(\Z(L)) \times \mathrm{Aut}(\widebar{L})$ is inducible if and only if for all $a,b,c \in \widebar{L}$
\[
	\phi \circ \alpha (a,b) = \alpha \circ (\psi,\psi) (a,b)  \quad \text{and} \quad 
	\phi \circ \beta (a,b,c) = \beta \circ (\psi,\psi,\psi) (a,b,c).
\]
\end{Lem}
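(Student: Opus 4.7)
The plan is to derive this lemma as a direct specialization of Theorem \ref{Thm_cmptbl_indcbl}, exploiting two simplifications that occur when $L$ is nilpotent of index $2$: the induced representation of $\widebar{L}$ on $\Z(L)$ is trivial, and the induced brackets on $\widebar{L}$ are trivial. The proof will therefore be mostly bookkeeping once these two structural facts are in hand; there is no serious obstacle.

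First I would record, as has already been observed in the paragraph preceding the lemma, that for any section $s:\widebar{L}\to L$ and $v\in \Z(L)$, the representation maps $\rho(a)(v) = [s(a),v]_L$, $D(a,b)(v) = \{s(a),s(b),v\}_L$, and $\theta(a,b)(v) = \{v,s(a),s(b)\}_L$ all vanish because $v\in \Z(L)$. Consequently, the compatibility condition of Lemma \ref{Lem_compatibility condition} is automatic (both the induced and twisted representations are the zero representation), so every pair $(\phi,\psi) \in \mathrm{Aut}(\Z(L)) \times \mathrm{Aut}(\widebar{L})$ lies in $\mathcal{C}$.

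Next I would identify $B^{(2,3)}(\widebar{L},\Z(L))$. Using the coboundary formulas \eqref{coboundary_map 3} and \eqref{coboundary_map 4}, any $\lambda\in C^1(\widebar{L},\Z(L))$ satisfies
\[
\delta_I\lambda(a,b) = \rho(a)\lambda(b) - \rho(b)\lambda(a) - \lambda([a,b]_{\widebar{L}}) = 0,
\]
\[
\delta_{II}\lambda(a,b,c) = \theta(b,c)\lambda(a) - \theta(a,c)\lambda(b) + D(a,b)\lambda(c) - \lambda(\{a,b,c\}_{\widebar{L}}) = 0,
\]
since all the representation maps vanish and, because $L^{(2)}=0$, both brackets on $\widebar{L}=L/\Z(L)$ are trivial. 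Hence $B^{(2,3)}(\widebar{L},\Z(L)) = \{0\}$.

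Finally, applying Theorem \ref{Thm_cmptbl_indcbl} to the compatible pair $(\phi,\psi)$, inducibility is equivalent to the two cocycles $(\alpha,\beta)$ and $(\phi\circ\alpha\circ(\psi^{-1},\psi^{-1}),\,\phi\circ\beta\circ(\psi^{-1},\psi^{-1},\psi^{-1}))$ being cohomologous. Because $B^{(2,3)}(\widebar{L},\Z(L)) = \{0\}$, being cohomologous collapses to being equal, so inducibility is equivalent to
\[
\alpha(a,b) = \phi\,\alpha(\psi^{-1}(a),\psi^{-1}(b)),\quad \beta(a,b,c) = \phi\,\beta(\psi^{-1}(a),\psi^{-1}(b),\psi^{-1}(c))
\]
for all $a,b,c \in \widebar{L}$. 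Substituting $a\mapsto \psi(a)$, $b\mapsto\psi(b)$, $c\mapsto\psi(c)$ and using that $\psi$ is a bijection of $\widebar{L}$ yields exactly the stated conditions $\phi\circ\alpha(a,b) = \alpha(\psi(a),\psi(b))$ and $\phi\circ\beta(a,b,c) = \beta(\psi(a),\psi(b),\psi(c))$, completing the proof.
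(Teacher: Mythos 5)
Your proposal is correct and follows essentially the same route as the paper, which obtains the lemma precisely by observing that the induced representation of $\widebar{L}$ on $\Z(L)$ is trivial (so every pair is compatible), that the abelian quotient and trivial representation force the $(2,3)$-coboundaries to vanish, and then specializing Theorem \ref{Thm_cmptbl_indcbl} so that ``cohomologous'' collapses to ``equal.'' Your explicit verification via the coboundary formulas \eqref{coboundary_map 3}--\eqref{coboundary_map 4} and the final substitution $a\mapsto\psi(a)$ just fill in details the paper leaves implicit.
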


\medskip

We now introduce two families of nilpotent Lie-Yamaguti algebras of index $2$ with one dimensional center. 
 
\begin{Exm}\label{Counter_exm_1}
	For $n \ge 1$, let $\mathfrak{H}_n$ be the vector space over $\mathbb{K}$ generated by $2n+1$ elements $$\{e,e_1,\ldots,e_n,e_{n+1},\ldots,e_{2n}\}.$$ The Lie-Yamaguti brackets on $\mathfrak{H}_n$ are defined as follows 
	\[
	[e_i,e_{n+i}] = e,\quad \{e_i,e_{n+i},e_i\} = e, \quad 1 \le i \le n.
	\]   
	All other brackets on the basis elements are either zero or are determined by the relations of Lie-Yamaguti algebra. 
\end{Exm} 

\begin{Rem}
Note that the pair $(\mathfrak{H}_n,[\cdot,\cdot])$ is the $n$-dimensional Heisenberg Lie algebra. Therefore, we call this family \textbf{``Heisenberg Lie-Yamaguti algebras"}. Also note, $(\mathfrak{H}_n,\{\cdot,\cdot,\cdot\})$ is a Lie triple system.  In general, a Lie-Yamaguti algebra does not simultaneously possess a non-trivial Lie algebra structure and a non-trivial Lie triple system structure. However, Example \ref{Counter_exm_1} yields a special type of Lie-Yamaguti algebra that is simultaneously a Lie algebra and a Lie triple system, with both the brackets being non-trivial.
\end{Rem}

\smallskip\noindent
Now consider the following abelian extension
\[\begin{tikzcd}  
	& & & 0 & \Z(\mathfrak{H}_n) & \mathfrak{H}_n & \widebar{\mathfrak{H}}_n & 0, &
	\arrow[from=1-4, to=1-5]
	\arrow["i", from=1-5, to=1-6]
	\arrow["p", from=1-6, to=1-7]
	\arrow[from=1-7, to=1-8]
\end{tikzcd}\]  
where $i$ and $p$ are the inclusion and the projection maps, respectively. Since $\mathfrak{H}_n$ is a nilpotent Lie-Yamaguti algebra of index $2$, Lemma \ref{Lem_counter-exm} characterizes the inducibility problem for the above extension. Note that $\mathcal{Z}(\mathfrak{H}_n) = \operatorname{span}\{e\}$. We further observe the following points regarding the above extension.

\medskip

\begin{itemize}
	\item Any pair $(\phi,\psi) \in \mathrm{Aut}(\Z(\mathfrak{H}_n)) \times \mathrm{Aut}(\widebar{\mathfrak{H}}_n)$ is a compatible pair.  
	\smallskip
	\item $\widebar{\mathfrak{H}}_n = \mathrm{span}\{\bar{e}_1,\bar{e}_2, \ldots, \bar{e}_n,\bar{e}_{n+1},\ldots,\bar{e}_{2n}\}$ where $\widebar{e}_i = p(e_i)$.
	\smallskip
	\item For the rest of our discussion, we fix a section $s: \widebar{\mathfrak{H}}_n \to \mathfrak{H}_n$ of $p$ given by $s(\bar{e}_i)=e_i$. 
	\smallskip
	\item Since $\Z(\mathfrak{H}_n) = \mathrm{span}\{e\}$, any automorphism $\phi \in \mathrm{Aut}(\Z(\mathfrak{H}_n))$ is determined by some element $\kappa \in \K$, $\kappa \neq 0$.
	\smallskip
	\item Let $\psi \in \mathrm{Aut}(\widebar{\mathfrak{H}}_n)$. The matrix of $\psi$ with respect to the basis $\{\bar{e}_1,\bar{e}_2,\ldots,\bar{e}_n,\bar{e}_{n+1},\ldots,\bar{e}_{2n}\}$ can be written as  
	\[
	[\psi]
	=\begin{bmatrix}
			A_{n \times n}
			& \vline & 
			B_{n \times n}\\
		 	\hline 
			C_{n \times n} 
			& \vline &
		D_{n \times n}
	\end{bmatrix}.
	\]
\end{itemize} 

\medskip

\noindent
Here we introduce a notation that will be used to state the main theorem of this section. Denote $\M_{(ac,bd)}$ by 
\[\M_{(ac,bd)} =
\begin{bmatrix}
\begin{vmatrix}
a_{1,1} & c_{1,1} \\
b_{1,\sigma} & d_{1,\sigma}
\end{vmatrix}
& 
\begin{vmatrix}
a_{1,2} & c_{1,2} \\
b_{1,\sigma} & d_{1,\sigma}
\end{vmatrix}
&\cdots&
\begin{vmatrix}
a_{1,n} & c_{1,n} \\
b_{1,\sigma} & d_{1,\sigma}
\end{vmatrix} 
\\
\begin{vmatrix}
a_{2,1} & c_{2,1} \\
b_{2,\sigma} & d_{2,\sigma}
\end{vmatrix} 
& 
\begin{vmatrix}
a_{2,2} & c_{2,2} \\
b_{2,\sigma} & d_{2,\sigma}
\end{vmatrix}
& \cdots & 
\begin{vmatrix}
a_{2,n} & c_{2,n} \\
b_{2,\sigma} & d_{2,\sigma}
\end{vmatrix}
\\ \vdots & \vdots & & \vdots \\
\begin{vmatrix}
a_{n,1} & c_{n,1} \\
b_{n,\sigma} & d_{n,\sigma}
\end{vmatrix}
& 
\begin{vmatrix}
a_{n,2} & c_{n,2} \\
b_{n,\sigma} & d_{n,\sigma}
\end{vmatrix}
& \cdots & 
\begin{vmatrix}
a_{n,n} & c_{n,n} \\
b_{n,\sigma} & d_{n,\sigma}
\end{vmatrix}
\end{bmatrix},
\quad\quad 1 \le \sigma \le n.
\]

\bigskip

\noindent
With the above discussion in mind, we state the main result of this section. 

\begin{Thm} \label{Theorem-1_counter_exm}
A pair $(\phi,\psi) \in \mathrm{Aut}(\Z(\mathfrak{H}_n)) \times \mathrm{Aut}(\widebar{\mathfrak{H}}_n)$ is inducible if and only if the following conditions hold.
\begin{enumerate}
\item $A^{t}D - C^{t}B = \kappa I_{n,n}$.
\smallskip
\item $A^{t}C$ and $B^{t}D$ are symmetric matrices.  
\smallskip
\item $A^t\M_{(ac,bd)} = \kappa E_{\sigma, \sigma}$, for all $1 \le \sigma \le n$. Here, $E_{\sigma,\sigma}$  denote the elementary matrix that has $1$ in the $(\sigma,\sigma)^{th}$ entry as its only non-zero entry. 
\smallskip
\item For all $1 \le \sigma \le n$,
\begin{enumerate}
\item $A^t\M_{(ac,ac)} = O_{n \times n} = B^t\M_{(ac,ac)}$,
\smallskip
\item $A^t\M_{(bd,bd)} = O_{n \times n} = B^t\M_{(bd,bd)},$
\smallskip	
\item $C^t \M_{(ac,bd)} = O_{n \times n}$.
\end{enumerate}
\end{enumerate}
\end{Thm}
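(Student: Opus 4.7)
The plan is to apply Lemma \ref{Lem_counter-exm} and reduce the question to the two tensorial equalities $\phi\circ\alpha=\alpha\circ(\psi,\psi)$ and $\phi\circ\beta=\beta\circ(\psi,\psi,\psi)$ on basis tuples. This reduction is what makes the nilpotent index-two setting tractable: the induced representation of $\widebar{\mathfrak{H}}_n$ on $\Z(\mathfrak{H}_n)$ is trivial, every pair is automatically compatible, and the coboundary space $B^{(2,3)}$ vanishes, so there are no coboundary corrections to track. First, I would compute the induced cocycles $(\alpha,\beta)$ for the chosen section $s(\bar{e}_i)=e_i$. Since $\widebar{\mathfrak{H}}_n$ is abelian, $\alpha(\bar{a},\bar{b})=[s(\bar{a}),s(\bar{b})]_{\mathfrak{H}_n}$ and $\beta(\bar{a},\bar{b},\bar{c})=\{s(\bar{a}),s(\bar{b}),s(\bar{c})\}_{\mathfrak{H}_n}$, so the only non-vanishing basis values are $\alpha(\bar{e}_i,\bar{e}_{n+i})=e$ and $\beta(\bar{e}_i,\bar{e}_{n+i},\bar{e}_i)=e$, together with their \eqref{LY1}/\eqref{LY2}-antisymmetric partners.

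Multilinear expansion then delivers the template formulas
\[
\alpha(X,Y)=\sum_{l=1}^{n}\bigl([X]_l[Y]_{n+l}-[X]_{n+l}[Y]_l\bigr)e,\qquad
\beta(X,Y,Z)=\sum_{l=1}^{n}[Z]_l\bigl([X]_l[Y]_{n+l}-[X]_{n+l}[Y]_l\bigr)e,
\]
where $[W]_i$ denotes the $i$-th coordinate of $W$ in the basis $\{\bar{e}_1,\ldots,\bar{e}_{2n}\}$. Every scalar equation that arises below is then a sum of $2\times 2$ minors, which are exactly the entries packaged inside the matrices $\M_{(ac,ac)}$, $\M_{(ac,bd)}$, and $\M_{(bd,bd)}$. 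With $\psi$ in block form $\begin{bmatrix}A & B\\ C & D\end{bmatrix}$, substitution gives, for the binary cocycle: the pairs $(\bar{e}_p,\bar{e}_{n+q})$ produce $(A^tD-C^tB)_{p,q}=\kappa\delta_{pq}$, which is condition $(1)$; the same-type pairs $(\bar{e}_p,\bar{e}_q)$ and $(\bar{e}_{n+p},\bar{e}_{n+q})$ (for which $\phi\alpha=0$) yield the symmetry of $A^tC$ and of $B^tD$, which is condition $(2)$. For the ternary cocycle there are eight sub-cases indexed by the lower/upper type of each argument; the sub-cases with $\phi\beta\neq 0$ (namely $(\bar{e}_p,\bar{e}_{n+\sigma},\bar{e}_r)$ and its \eqref{LY2}-partner $(\bar{e}_{n+p},\bar{e}_\sigma,\bar{e}_r)$) both collapse to $(A^t\M_{(ac,bd)})_{r,p}=\kappa\delta_{r\sigma}\delta_{p\sigma}$ for every $\sigma$, i.e., condition $(3)$; the remaining six sub-cases all have $\phi\beta=0$ and split naturally into the three vanishing identities $(4a),(4b),(4c)$ according to whether the passive indices live in the $A/C$ block or the $B/D$ block of $\psi$.

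The main obstacle is purely combinatorial: organizing the eight ternary sub-cases so that each one cleanly produces a single entry of one of the matrix identities, with the "free" column index playing the role of $\sigma$. Once the $\beta$-template above is written as $\beta(X,Y,Z)=\sum_{l}[Z]_l\,\det\bigl(\begin{smallmatrix}[X]_l & [X]_{n+l}\\ [Y]_l & [Y]_{n+l}\end{smallmatrix}\bigr)\,e$, each sub-case reads off a single entry of an $\M$-matrix product, and the equivalence of the cocycle equations with $(1)$--$(4)$ is immediate in both directions. For the converse, given $(1)$--$(4)$ the scalar equations reassemble into the two identities $\phi\circ\alpha=\alpha\circ(\psi,\psi)$ and $\phi\circ\beta=\beta\circ(\psi,\psi,\psi)$ on every basis tuple, hence on all of $\widebar{\mathfrak{H}}_n$ by multilinearity; Lemma \ref{Lem_counter-exm} then certifies that $(\phi,\psi)$ is inducible.
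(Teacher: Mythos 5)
Your proposal is correct and follows essentially the same route as the paper: invoke Lemma \ref{Lem_counter-exm}, compute the induced cocycle on the chosen section so that $\alpha$ and $\beta$ are just the Heisenberg brackets on basis lifts, substitute the block form of $[\psi]$, and read off conditions $(1)$--$(4)$ from the index-type cases, with the converse following by multilinearity. Your minor-sum template for $\alpha$ and $\beta$ is only a compact rewriting of the same case-by-case expansion the paper carries out.
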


\begin{proof}
Let $(\phi,\psi) \in \mathrm{Aut}(\Z(\mathfrak{H}_n)) \times \mathrm{Aut}(\widebar{\mathfrak{H}}_n)$ be an inducible pair. Using Lemma \ref{Lem_counter-exm}, we have
\begin{equation} \label{Exm_equ_1}
\phi \circ \alpha (a,b) = \alpha \circ (\psi,\psi) (a,b),
\end{equation}
\begin{equation} \label{Exm_equ_2}
\phi \circ \beta (a,b,c) = \beta \circ (\psi,\psi,\psi) (a,b,c),\quad \forall ~a,b,c \in \widebar{\mathfrak{H}}_n.
\end{equation}
We show that equation (\ref{Exm_equ_1}) gives us conditions $(1)$ and $(2)$, and equation (\ref{Exm_equ_2}) gives us condition $(3)$ and $(4)$. First, we observe that for any $1 \le i,j,k \le 2n$,
\[\alpha(\bar{e}_i,\bar{e}_j) = [s(\bar{e}_i),s(\bar{e}_j)] - s[\bar{e}_i,\bar{e}_j] = [s(\bar{e}_i),s(\bar{e}_j)] = [e_i,e_j],\] 
\[\beta(\bar{e}_i,\bar{e}_j,\bar{e}_k) = \{s(\bar{e}_i),s(\bar{e}_j),s(\bar{e}_k)\} - s\{\bar{e}_i,\bar{e}_j,\bar{e}_k\} = \{e_i,e_j,e_k\}.\]
Therefore, equations (\ref{Exm_equ_1}) and (\ref{Exm_equ_2}) reduces to
\begin{equation} \label{Exm_equ_3}
\alpha(\psi(\bar{e}_i),\psi(\bar{e}_j)) = \phi([e_i,e_j]),  \qquad 1 \le i,j \le 2n,
\end{equation}
\begin{equation} \label{Exm_equ_4}
\beta(\psi(\bar{e}_i),\psi(\bar{e}_j),\psi(\bar{e}_k)) = \phi(\{e_i,e_j,e_k\}), \qquad 1 \le i,j,k \le 2n.
\end{equation}
Recall that we denote the matrix of $\psi$ with respect to the basis $\{\bar{e}_1,\cdots,\bar{e}_n,\bar{e}_{n+1},\cdots,\bar{e}_{2n}\}$ by 
\[[\psi]=\begin{bmatrix}
A_{n \times n}
& \vline & 
B_{n \times n}\\
\hline
C_{n \times n} 
& \vline &
D_{n \times n}
\end{bmatrix}.\]
For $1 \le \iota \le n$, 
\begin{equation} \label{Exm_equ_5}
\psi(\bar{e}_{\iota}) = \sum_{r=1}^{n} a_{r,\iota} \bar{e}_r + \sum_{r=1}^{n} c_{r,\iota} \bar{e}_{n+r}, 
\end{equation}
and for $n+1 \le \iota \le 2n$, 
\begin{equation} \label{Exm_equ_6}
\psi(\bar{e}_{\iota}) = \sum_{r=1}^{n} b_{r,\iota-n} \bar{e}_r + \sum_{r=1}^{n} d_{r,\iota-n} \bar{e}_{n+r}. 
\end{equation}

\medskip 
\noindent
\textbf{Proving condition $(1)$:} $A^{t}D - C^{t}B = \kappa I_{n \times n}$. 

\smallskip

Let $1 \le i \le n$ and $n+1 \le j \le 2n$. Then plugging equation (\ref{Exm_equ_5}) and (\ref{Exm_equ_6}) in the left hand side of  (\ref{Exm_equ_3}), we get  
\begin{align*}
\alpha(\psi(\bar{e}_i), \psi(\bar{e}_j)) 
&= \left[s\left(\sum_{r=1}^{n} a_{r,i} \bar{e}_r + \sum_{r=1}^{n} c_{r,i} \bar{e}_{n+r}\right), s\left(\sum_{r=1}^{n} b_{r,j-n} \bar{e}_r + \sum_{r=1}^{n} d_{r,j-n} \bar{e}_{n+r}\right)\right]\\
&= \left[\sum_{r=1}^{n} a_{r,i} e_r + \sum_{r=1}^{n} c_{r,i} e_{n+r},~ \sum_{r=1}^{n} b_{r,j-n} e_r + \sum_{r=1}^{n} d_{r,j-n} e_{n+r} \right] \\
&= \sum_{r=1}^{n}\left(a_{r,i}d_{r,j-n} - c_{r,i}b_{r,j-n}\right) [e_r,e_{n+r}]\\
&= \sum_{r=1}^{n}\left(a_{r,i}d_{r,j-n} - c_{r,i}b_{r,j-n}\right) e.
\end{align*}
Then from (\ref{Exm_equ_3}) it follows that, for all $1 \le i \le n$ and $n+1 \le j \le 2n$, 
\[\sum_{r=1}^{n}\left(a_{r,i}d_{r,j-n} - c_{r,i}b_{r,j-n}\right) 
=
\begin{cases}
\kappa, & \text{if}~ j-n = i, \\
0, & \text{otherwise}.
\end{cases}\]
or equivalently taking $j-n=l$, we rewrite the above equation, for $1 \le i,l \le n$,
\[\sum_{r=1}^{n}\left(a_{r,i}d_{r,l} - c_{r,i}b_{r,l}\right) 
=
\begin{cases}
\kappa, & \text{if}~ l = i, \\
0, & ~\text{otherwise}.
\end{cases}\]
which is equivalent to condition $(1)$, i.e.,
\begin{equation} \label{Exm_equ_7}
	A^tD - C^tB = \kappa I_{n \times n}.
\end{equation}
	
\medskip
\noindent
\textbf{Proving condition $(2)$:} $A^tC$ and $B^tD$ are symmetric matrices. 

\smallskip

Let $1 \le i,j \le n$. Plugging  equation (\ref{Exm_equ_5}) in the left hand side of (\ref{Exm_equ_3}), we get 
\[\alpha(\psi(\bar{e}_i), \psi(\bar{e}_j)) =\sum_{r=1}^{n} \left(a_{r,i}c_{r,j} - c_{r,i}a_{r,j}\right) e.\]
Again, from (\ref{Exm_equ_3}) it follows that, for all $1 \le i,j \le n$,
\[\sum_{r=1}^{n} \left(a_{r,i}c_{r,j} - c_{r,i}a_{r,j}\right) = \phi([e_i,e_j]) = 0,\]
which is equivalent to condition $(2)$, 
\begin{equation} \label{Exm_equ_8}
A^tC - C^tA = 0,~\text{i.e.},~A^tC ~\text{is symmetric.} 
\end{equation}
Similarly, computing equation (\ref{Exm_equ_3}) for $n+1 \le i,j \le 2n$, we get the condition
\begin{equation} \label{Exm_equ_9}
B^tD - D^tB = 0,~\text{i.e.}, ~B^tD ~\text{is symmetric.}
\end{equation} 

\medskip
\noindent
\textbf{Proving condition $(3)$:} $A^t\M_{(ac,bd)} = \kappa E_{\sigma, \sigma}$, \quad for all $1 \le \sigma \le n$.

\smallskip
	
Let $1 \le i, k \le n$ and $n+1 \le j \le 2n$. Then, plugging equations (\ref{Exm_equ_5}) and (\ref{Exm_equ_6}) in the left hand side of (\ref{Exm_equ_4}), we get 
\begin{align*}
&\beta(\psi(\bar{e}_i), \psi(\bar{e}_j), \psi(\bar{e}_k)) \\
&= \left\{s\left(\sum_{r=1}^{n} a_{r,i} \bar{e}_r + \sum_{r=1}^{n} c_{r,i} \bar{e}_{n+r}\right), s\left(\sum_{r=1}^{n} b_{r,j-n} \bar{e}_r + \sum_{r=1}^{n} d_{r,j-n} \bar{e}_{n+r}\right), s\left(\sum_{r=1}^{n} a_{r,k} \bar{e}_r + \sum_{r=1}^{n} c_{r,k} \bar{e}_{n+r}\right)\right\}  \\
&= \left\{\sum_{r=1}^{n} a_{r,i} e_r + \sum_{r=1}^{n} c_{r,i} e_{n+r},~ \sum_{r=1}^{n} b_{r,j-n} e_r + \sum_{r=1}^{n} d_{r,j-n} e_{n+r},~\sum_{r=1}^{n} a_{r,k} e_r + \sum_{r=1}^{n} c_{r,k} e_{n+r} \right\} \\
&= \sum_{r=1}^{n} (a_{r,i} d_{r,j-n} a_{r,k} - c_{r,i}b_{r,j-n}a_{r,k}) \{e_r,e_{n+r},e_{r}\}\\
&= \sum_{r=1}^{n} a_{r,k}\left(a_{r,i} d_{r,j-n} - c_{r,i}b_{r,j-n}\right) e.
\end{align*}
Then, from equation \eqref{Exm_equ_4} it follows that, for all $1 \le i,k \le n$ and $n+1 \le j \le 2n$, 
\[\sum_{r=1}^{n} a_{r,k}\left(a_{r,i} d_{r,j-n} - c_{r,i}b_{r,j-n}\right)=
\begin{cases}
\kappa, & \text{if}~ i=j-n=k, \\
0, & \text{otherwise}.
\end{cases}\]
or equivalently taking $j-n = \sigma$ we have, for all $1 \le i,\sigma,k \le n$,
\[\sum_{r=1}^{n} a_{r,k}\left(a_{r,i} d_{r,\sigma} - c_{r,i}b_{r,\sigma}\right)=
\begin{cases}
\kappa, & \text{if}~ i=\sigma=k, \\
0, & \text{otherwise}.
\end{cases}\]
which is equivalent to condition $(3)$, i.e., 
\begin{equation} \label{Exm_equ_10}
A^t\M_{(ac,bd)} = \kappa E_{\sigma, \sigma}, \quad \text{for all}~ 1 \le \sigma \le n.
\end{equation}
	
\medskip
\noindent
\textbf{Proving condition $(4)$:}  

\smallskip
	
Let $1 \le i,j,k \le n$. Using the equation (\ref{Exm_equ_5}) in the left hand side of the equation (\ref{Exm_equ_4}), we get
\[\beta(\psi(\bar{e}_i), \psi(\bar{e}_j), \psi(\bar{e}_k)) = \sum_{r=1}^{n} \left(a_{r,i}c_{r,j}a_{r,k} - c_{r,i} a_{r,j} a_{r,k}\right)e.\]
From equation (\ref{Exm_equ_4}) it follows that
\[\sum_{r=1}^{n} a_{r,k} (a_{r,i} c_{r,j} - c_{r,i}a_{r,j}) = 0,\quad \forall~1 \le i,j,k \le n.\]
Replacing $j = \sigma$, the above condition is equivalent to the following expression
\begin{equation} \label{Exm_equ_11}
A^t \M_{(ac,ac)} = O_{n,n},\quad \forall~1 \le \sigma \le n.
\end{equation}
Similarly, taking $1 \le i,j \le n$ and $n+1 \le k \le 2n,$ we get 
\[\beta(\psi(\bar{e}_i), \psi(\bar{e}_j), \psi(\bar{e}_k)) = \sum_{r=1}^{n} (a_{r,i}c_{r,j}b_{r,k-n} - c_{r,i} a_{r,j} b_{r,k-n} )e.\]
Then from equation (\ref{Exm_equ_4}) it follows that 
\[\sum_{r=1}^{n} b_{r,k-n} \left(a_{r,i} c_{r,j} - c_{r,i}a_{r,j}\right) = 0,\quad 1 \le i, j \le n \mbox{ and }n+1 \le k \le 2n.\]
Replacing $k-n=l$ and $i = \sigma$, the above condition is equivalent to the following identity 
\begin{equation} \label{Exm_equ_12}
B^t \M_{(ac,ac)} = O_{n \times n},\quad \forall~1 \le \sigma \le n. 
\end{equation}
From equations (\ref{Exm_equ_11}) and (\ref{Exm_equ_12}) we get
\begin{equation} \label{Exm_equ_13}		
A^t\M_{(ac,ac)} = O_{n \times n} = B^t\M_{(ac,ac)},\quad \forall~1 \le \sigma \le n. 
\end{equation}
Similarly, computing (\ref{Exm_equ_4}) with $n+1 \le i,j \le 2n$, $1 \le k \le n$ we get condition $4(b)$. The condition $4(c)$ is obtained by computing (\ref{Exm_equ_4}) with $1 \le i \le n$, $n+1 \le j,k \le 2n$.   
	
\medskip
\noindent
\textbf{Converse:} It easily follows that the conditions $(1)$-$(4)$ are equivalent to (\ref{Exm_equ_3}) and (\ref{Exm_equ_4}), which in turn are equivalent to (\ref{Exm_equ_1}) and (\ref{Exm_equ_2}). Thus, any pair $(\phi,\psi)$ satisfying conditions $(1)$-$(4)$ satisfies (\ref{Exm_equ_1}) and (\ref{Exm_equ_2}), hence is an inducible pair. Therefore, the conditions $(1)$-$(4)$ stated in the theorem are necessary and sufficient for a pair of automorphisms $(\phi,\psi) \in \mathrm{Aut}(\Z(\mathfrak{H}_n)) \times \mathrm{Aut}(\widebar{\mathfrak{H}}_n)$ to be inducible by an automorphism $\gamma \in \mathrm{Aut}(\mathfrak{H}_n)$.  
\end{proof}

\medskip 

\begin{Rem}
It is clear that not all compatible pairs are inducible. For example, in the above theorem a compatible pair $(\phi,\psi) \in \mathrm{Aut}(\Z(\mathfrak{H}_n)) \times \mathrm{Aut}(\widebar{\mathfrak{H}}_n)$ is not inducible unless the conditions $(1)$-$(4)$ are satisfied. 
\end{Rem}
\medskip

\noindent \textbf{Application to the case of Heisenberg Lie algebras.} Let $(\mathfrak{H}_n,[\cdot,\cdot])$ be the $n$-dimensional Heisenberg Lie algebra. $(\mathfrak{H}_n,[\cdot,\cdot])$ becomes a Lie-Yamaguti algebra with trivial ternary bracket. 
Consider the following abelian extension in the category of Lie-Yamaguti algebras
\[\begin{tikzcd}  
	& & & 0 & \Z(\mathfrak{H}_n) & \mathfrak{H}_n & \widebar{\mathfrak{H}}_n & 0, & &
	\arrow[from=1-4, to=1-5]
	\arrow["i", from=1-5, to=1-6]
	\arrow["p", from=1-6, to=1-7]
	\arrow[from=1-7, to=1-8]
\end{tikzcd}\]  
where $i$ and $p$ are the inclusion and the projection maps, respectively. Then, taking the ternary bracket to be zero in the proof of  Theorem \ref{Theorem-1_counter_exm}, we obtain the following result. 
\begin{Cor}
 A pair of Lie algebra automorphisms $(\phi,\psi) \in \mathrm{Aut}(\Z(\mathfrak{H}_n)) \times \mathrm{Aut}(\widebar{\mathfrak{H}}_n)$ is inducible by a Lie algebra automorphism $\gamma \in \mathrm{Aut}(\mathfrak{H}_n)$ if and only if 
\smallskip
\begin{enumerate}
	\item $A^{t}D - C^{t}B = \kappa I_{n,n}$.
	\smallskip
	\item $A^{t}C$ and $B^{t}D$ are symmetric matrices.
\end{enumerate}
\end{Cor}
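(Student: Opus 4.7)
The plan is to derive this corollary as a direct specialization of Theorem \ref{Theorem-1_counter_exm} by setting the ternary bracket to zero. First, I would observe that when $(\mathfrak{H}_n, [\cdot,\cdot])$ is regarded as a Lie-Yamaguti algebra with trivial ternary operation $\{\cdot,\cdot,\cdot\} \equiv 0$, the center $\mathcal{Z}(\mathfrak{H}_n)$ and the quotient $\widebar{\mathfrak{H}}_n$ coincide with the usual Lie-theoretic center and quotient, so the abelian extension in question is exactly the same short exact sequence (only the category has changed). Moreover, the induced representation of $\widebar{\mathfrak{H}}_n$ on $\mathcal{Z}(\mathfrak{H}_n)$ is still trivial, so every pair $(\phi,\psi)\in \mathrm{Aut}(\mathcal{Z}(\mathfrak{H}_n))\times \mathrm{Aut}(\widebar{\mathfrak{H}}_n)$ is automatically compatible, and Lemma \ref{Lem_counter-exm} applies verbatim.

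Next I would examine the induced $(2,3)$-cocycle $(\alpha,\beta)$ attached to the extension. Using the section $s(\bar e_i)=e_i$, the bilinear part $\alpha$ is unchanged from the Heisenberg calculation, while the trilinear part satisfies
\[
\beta(\bar e_i,\bar e_j,\bar e_k) \;=\; \{s(\bar e_i),s(\bar e_j),s(\bar e_k)\} - s\{\bar e_i,\bar e_j,\bar e_k\} \;=\; 0,
\]
identically, since the ternary bracket on $\mathfrak{H}_n$ is now zero. Consequently the second equation in Lemma \ref{Lem_counter-exm}, namely $\phi \circ \beta = \beta\circ(\psi,\psi,\psi)$, reduces to the trivial identity $0=0$. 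Thus only the first equation $\phi\circ\alpha = \alpha\circ(\psi,\psi)$ carries content, and conditions $(3)$ and $(4)$ of Theorem \ref{Theorem-1_counter_exm}, which were derived precisely from the $\beta$-equation, become vacuous.

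It then remains to translate the single surviving equation $\phi\circ\alpha = \alpha\circ(\psi,\psi)$ into matrix conditions on the blocks $A,B,C,D$ of $\psi$. This is exactly the computation already performed in the proof of Theorem \ref{Theorem-1_counter_exm} in establishing conditions $(1)$ and $(2)$: evaluating $\alpha(\psi(\bar e_i),\psi(\bar e_j))$ on the index ranges $(1\le i\le n,\, n+1\le j\le 2n)$, $(1\le i,j\le n)$, and $(n+1\le i,j\le 2n)$ respectively yields $A^{t}D - C^{t}B = \kappa I_{n\times n}$, the symmetry of $A^{t}C$, and the symmetry of $B^{t}D$. Re-running those three block computations verbatim delivers the necessity.

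For sufficiency, I would simply note that conditions $(1)$ and $(2)$ together are equivalent to the equation $\phi\circ\alpha = \alpha\circ(\psi,\psi)$ on every pair of basis vectors; combined with the trivial $\beta$-equation, Lemma \ref{Lem_counter-exm} gives inducibility. I do not expect a genuine obstacle here: the argument is a clean specialization, and the only points demanding minor care are verifying that the center, quotient, section, and induced representation all coincide with their Lie-Yamaguti counterparts when the ternary bracket vanishes, and that the three indexing cases in the $\alpha$-computation exhaust all constraints.
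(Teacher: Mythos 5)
Your proposal is correct and follows essentially the same route as the paper, which obtains the corollary precisely by taking the ternary bracket to be zero in the proof of Theorem \ref{Theorem-1_counter_exm}, so that $\beta$ vanishes, conditions $(3)$--$(4)$ become vacuous, and the $\alpha$-equation yields conditions $(1)$ and $(2)$. Your added care in checking that the center, quotient, section, and (trivial) induced representation agree with the Lie-Yamaguti picture is exactly the implicit content of the paper's shorter argument.
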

\noindent

\smallskip

Next, we consider another family of nilpotent Lie Yamaguti algebra of index $2$, which contains Heisenberg Lie-Yamaguti algebra as a sub-algebra.  

\begin{Exm} \label{Counter_exm_2}
For $n \ge 1$, let $\mathfrak{G}_n$ denote the vector space generated by $2n+2$ elements $$\{e,e_1,\ldots,e_n,e_{n+1},e_{n+2},\ldots,e_{2n},e_{2n+1}\}.$$ 
\noindent
The Lie-Yamaguti brackets on $\mathfrak{G}_n$ are defined as follows 
\[[e_i,e_{n+1+i}] = e = \{e_i,e_{n+1+i},e_i\}~~ \text{for}~ 1 \le i \le n, \quad \text{and} \quad \{e_{n+1},e_{2n+1},e_{n+1}\} = e.\]      
All other brackets on the basis elements are either zero or are determined by the relations of Lie-Yamaguti algebra.
\end{Exm}

\begin{Rem} 
The family of Heisenberg Lie-Yamaguti algebras can be embedded in this family. More precisely, for each $n \ge 1$, there exist an injective Lie-Yamaguti algebra morphism $\Phi:\mathfrak{H}_n \to \mathfrak{G}_n$, defined as follows
\[\Phi(e) := e, \quad \text{and} \quad \Phi(e_i):= 
\begin{cases}
e_i & \text{if}~~ 1 \le i \le n, \\
e_{i+1} & \text{if}~  n+1 \le i \le 2n. 
\end{cases}\]
\smallskip
\noindent
Therefore, we call this family \textbf{``generalized Heisenberg Lie-Yamaguti algebra"}.
\end{Rem}

\noindent
Now, consider the following abelian extension
\[\begin{tikzcd}  
	0 & \Z(\mathfrak{G}_n) & \mathfrak{G}_n & \widebar{\mathfrak{G}}_n & 0,
	\arrow[from=1-1, to=1-2]
	\arrow["i", from=1-2, to=1-3]
	\arrow["p", from=1-3, to=1-4]
	\arrow[from=1-4, to=1-5]
\end{tikzcd}\] 
where $i$ is the inclusion map and $p$ is the projection map. Let us note down the following observations.

\begin{itemize}
\item $\Z(\mathfrak{G}_n) = \mathrm{span}\{e\}$, that is, the center of $\mathfrak{G}_n$ is one-dimensional. 
\smallskip
\item Any pair $(\phi,\psi) \in \mathrm{Aut}(\Z(\mathfrak{G}_n)) \times \mathrm{Aut}(\widebar{\mathfrak{G}}_n)$ is a compatible pair.  
\smallskip
\item $\widebar{\mathfrak{G}}_n = \mbox{span}\{\bar{e}_1, \ldots, \bar{e}_n,\bar{e}_{n+1},\bar{e}_{n+2},\ldots,\bar{e}_{2n+1}\},$ where $\widebar{e}_i = p(e_i)$.
\smallskip
\item For our further disucssion, we fix a section $s: \widebar{\mathfrak{G}}_n \to \mathfrak{G}_n$ of $p$ that is given by $s(\bar{e}_i)=e_i$. 
\smallskip
\item Since $\Z(\mathfrak{G}_n) = \mbox{span}\{e\}$, any automorphism $\phi \in \mathrm{Aut}(\Z(\mathfrak{G}_n))$ is determined by some element $\kappa \in \K$, $\kappa \neq 0$.
\smallskip
\item For any automorphism, $\psi \in \mathrm{Aut}(\widebar{\mathfrak{G}}_n)$, we denote the matrix of $\psi$ with respect to the basis 
$\{\bar{e}_1, \ldots, \bar{e}_n,\bar{e}_{n+1},\bar{e}_{n+2},\ldots,\bar{e}_{2n+1}\}$ by 

\bigskip 

\begin{center}
\begin{tikzpicture}
	\node[] at (-1,0) (Psi) {$ [\psi] $};
	\node[] at (-0.4,0) (eql) {$ = $};
	\node[] at (0.8,0) () {$m_{n+1,1}$};
	\node[] at (2,0) () {$m_{n+1,2}$};
	\node[] at (3,0) () {$\cdots$};
	\node[] at (4.1,1.6) () {$m_{1,n+1}$};
	\node[] at (4.1,1.0) () {$m_{2,n+1}$};
	\node[] at (4.1,0.6) () {$\vdots$};
	\node[] at (4.1,0) () {$m_{n+1,n+1}$};
	\node[] at (4.1,-0.6) () {$m_{n+2,n+1}$};
	\node[] at (4.1,-1.0) () {$\vdots$};
	\node[] at (4.1,-1.6) () {$m_{2n+1,n+1}$};
	\node[] at (5.7,0) () {$m_{n+1,n+2}$};
	\node[] at (6.9,0) () {$\cdots$};
	\node[] at (8.1,0) () {$m_{n+1,2n+1}$};
	\node[] at (1.7,1.3) () {\LARGE{$A_{n \times n}$}};
	\node[] at (1.7,-1.2) () {\LARGE{$C_{n \times n}$}};
	\node[] at (7,1.3) () {\LARGE{$B_{n \times n}$}};
	\node[] at (7,-1.2) () {\LARGE{$D_{n \times n}$}};
	\draw[black,thick,-] (0,2)--(0,-2);
	\draw[black,thick,-] (0,2)--(0.1,2);
	\draw[black,thick,-] (0,-2)--(0.1,-2);
	\draw[black,thick,-] (0.1,0.4)--(3.2,0.4);
	\draw[black,thick,-] (3.2,0.4)--(3.2,1.9);
	\draw[black,thick,-] (0.1,-0.3)--(3.2,-0.3);
	\draw[black,thick,-] (3.2,-0.3)--(3.2,-1.9);
	\draw[black,thick,-] (4.9,0.4)--(4.9,1.9);
	\draw[black,thick,-] (4.9,0.4)--(9.0,0.4);
	\draw[black,thick,-] (4.9,-0.3)--(9.0,-0.3);
	\draw[black,thick,-] (4.9,-0.3)--(4.9,-1.9);
	\draw[black,thick,-] (9.1,2)--(9.1,-2);
	\draw[black,thick,-] (9.1,2)--(9,2);
	\draw[black,thick,-] (9.1,-2)--(9,-2);
\end{tikzpicture}
\end{center}
\end{itemize} 

\medskip

\noindent
It is clear that Lemma \ref{Lem_counter-exm} is applicable for the above extension. Thus, we obtain a explicit necessary and sufficient condition for inducibility of a pair of automorphism $(\phi,\psi) \in \mathrm{Aut}(\mathfrak{G}_n) \times \mathrm{Aut}(\widebar{\mathfrak{G}}_n)$, by solving the following equations
\begin{eqnarray*} 
	\phi \circ \alpha (\bar{e}_i,\bar{e}_j) &=& \alpha \circ (\psi,\psi) (\bar{e_i},\bar{e}_j), \\
	\phi \circ \beta (\bar{e}_i,\bar{e}_j,\bar{e}_k) &=& \beta \circ (\psi,\psi,\psi) (\bar{e}_i,\bar{e}_j,\bar{e}_k),
\end{eqnarray*} 
for all $1 \le i,j,k \le 2n+1$. Computing the above equations for each value of $i,j,$ and $k$, one gets $24$ relations characterizing the inducibility problem for the extension
\[\begin{tikzcd}  
	0 & \Z(\mathfrak{G}_n) & \mathfrak{G}_n & \widebar{\mathfrak{G}}_n & 0 
	\arrow[from=1-1, to=1-2]
	\arrow["i", from=1-2, to=1-3]
	\arrow["p", from=1-3, to=1-4]
	\arrow[from=1-4, to=1-5]
\end{tikzcd}.\] 
For instance, taking $i=n+1=k$, $j=2n+1$, we get
\[
\sum_{r=1}^n m_{r,n+1} 
\begin{vmatrix}
	m_{r,n+1} & m_{n+1+r,n+1} \\
	b_{r,n} & d_{r,n} 
\end{vmatrix}
+ 
m_{n+1,n+1}
\begin{vmatrix}
	m_{n+1,n+1} & m_{n+1,2n+1} \\
	m_{2n+1,n+1} & d_{n,n} 
\end{vmatrix} = \kappa,\]
which gives a necessary condition for a pair $(\phi, \psi) \in \mathrm{Aut}(\Z(\mathfrak{G}_n)) \times \mathrm{Aut}(\widebar{\mathfrak{G}}_n)$ to be inducible. 

\smallskip

\subsection*{An algorithm for nilpotent Lie-Yamaguti algebras} 
Now, we write an algorithm to find the inducible pairs for the following extension that arises from nilpotent Lie-Yamaguti algebras of index 2. 
\[\begin{tikzcd}  
	0 & \Z(L) & L & \widebar{L} & 0, 
	\arrow[from=1-1, to=1-2]
	\arrow["i", from=1-2, to=1-3]
	\arrow["p", from=1-3, to=1-4]
	\arrow[from=1-4, to=1-5]
\end{tikzcd}\]
where $\Z(L)$ is the center of $L$ and $\widebar{L}=L/ \Z(L)$.

\bigskip
\noindent
\textbf{Notations:}
\smallskip
\begin{enumerate}
\item For any $m,n\geq 1$, let $L = \mathrm{span}\{e_1,\ldots, e_n,e_{n+1},e_{n+2},\ldots,e_{n+m}\}$ be an $n+m$-dimensional nilpotent Lie-Yamaguti algebra of index $2$ with $m$-dimensional center $\Z(L)$ such that $\{e_{n+1},\ldots, e_{n+m}\}$ is a basis of $\Z(L)$.

\smallskip

\item Let $B_{L} := \{e_{1},e_{2},\ldots,e_{n}\}$, and by $S_{L}:=\{\text{Set of all structure equations of}~L\}$. For instance, the set of all structure equations of Heisenberg Lie-Yamaguti algebra is, $S_{\mathfrak{H}_n} =\big\{[e_i,e_{n+i}] = e, ~\{e_i,e_{n+i},e_i\} = e ~:~ 1 \le i \le n\big\}$.

\smallskip

\item Consider the abelian extension $0 \to \Z(L) \xrightarrow[]{i} L \xrightarrow[]{p} \widebar{L} \to 0$. Fix a basis of $\widebar{L}= \mathrm{span}\{\bar{e}_1,\bar{e}_2,\ldots,\bar{e}_n\}$, along with a section $s:L \to \L$ of $p$ defined as  $s(\bar{e}_i) = e_{i}$. Let $B_{\widebar{L}} := \{\bar{e}_1,\bar{e}_2,\ldots,\bar{e}_n\}$.

\smallskip

\item For any pair $(\phi,\psi) \in \mathrm{Aut}(\Z(L)) \times \mathrm{Aut}(\widebar{L})$, let $[\psi] = (a_{ij})_{n \times n}$ be the matrix representation of $\psi$ and $[\phi] = (b_{ij})_{m \times m}$ be the $m \times m$ matrix associated to $\phi$. 
\end{enumerate}

\smallskip
\noindent
With the above notations, Lemma \ref{Theorem-1_counter_exm} reduces to the following statement: A pair $(\phi,\psi) \in \mathrm{Aut}(\Z(L)) \times \mathrm{Aut}(\widebar{L})$ is inducible if and only if for all $\bar{e}_i,\bar{e}_j,\bar{e}_k \in B_{\widebar{L}}$
\begin{eqnarray*}
	\alpha(\psi(\bar{e}_i),\psi(\bar{e}_j)) & = & \phi([e_{i},e_{j}]), \\
	\beta(\psi(\bar{e}_i),\psi(\bar{e}_j),\psi(\bar{e}_k)) & = & \phi(\{e_{i},e_{j},e_{k}\}).  
\end{eqnarray*}

\medskip
\noindent
We are now ready to write our algorithm.
 
\begin{algorithm}
	\centering
	\caption{: \textbf{For nilpotent Lie-Yamaguti algebras of index $2$}}
	\begin{algorithmic}[1]
		\Statex \textbf{Input:} 
		$B_{L},~B_{\widebar{L}},~S_{L},~s,~\psi,~\phi,~\alpha,~\beta$
		\smallskip
		\hrule
		\hfil
		\State $\mathcal{R} \leftarrow \emptyset $  
		\State For $ i=1 ~to~ n   $
		\State \quad For $ j=1 ~to~ n $ 
		\State \qquad Simplify $ \alpha(\psi(\bar{e}_i),\psi(\bar{e}_j)) =  \phi([e_{i},e_{j}])$ and get a relation r.
		\State  \qquad $ \mathcal{R} \leftarrow  \mathcal{R} \cup r $
		\State \qquad For $ k=1 ~to ~ n $
		\State \quad \qquad Simplify $\beta(\psi(\bar{e}_i),\psi(\bar{e}_j),\psi(\bar{e}_k)) = \phi(\{e_{i},e_{j},e_{k}\})$ and get a relation r.
		\State  \quad \qquad $ \mathcal{R} \leftarrow  \mathcal{R} \cup r $
		\State Return $ \mathcal{R} $
	\end{algorithmic}
\end{algorithm}
\noindent
The set $\mathcal{R}$ obtained from the algorithm as the output contains all the relations that characterize the inducibility problem. 

\section*{\large{Conclusion}}  

In this work, we discuss the inducibility problem for abelian extensions of Lie-Yamaguti algebras. We present an algorithm to characterize the inducible pairs of automorphisms in abelian extensions arising from nilpotent Lie-Yamaguti algebras (of index $2$) with a one-dimensional center. For example, one can deduce a solution of the inducible problem for the $3$-dimensional nilpotent Lie-Yamaguti algebras $\mathcal{L}_{3,1},~\mathcal{L}_{3,2},~\mathcal{L}_{3,3},$ and $\mathcal{L}_{3,4}$, described in \cite{abdelwahab}. Our first example of Heisenberg Lie-Yamaguti algebras is a generalization of $\mathcal{L}_{3,4}$. 

\medskip

In literature, different kinds of extensions such as central, abelian, and non-abelian extensions of algebraic structures are developed in \cite{hochschild,hoch,serre,lyndon,eilen}. The non-abelian extensions are the most general notion. Eilenberg and Maclane first studied non-abelian extensions of abstract groups in \cite{eilen} and introduced low-dimensional non-abelian cohomology groups. Similar results for (super) Lie algebras, Leibniz algebras, Lie $2$-algebras, $L_{\infty}$-algebras, and Rota-Baxter Lie algebras are also considered in the literature, we refer to \cite{casas,Chen,fial-pen,hazra-habib,hoch,inas,laza,liu,Mishra}. Thus, it is imperative to study non-abelian extensions for Lie-Yamaguti algebras and to interpret them in terms of derivations and non-abelian cohomology.

\end{document}